\newtheorem{theorem}{Theorem}[section]
\newtheorem{lemma}[theorem]{Lemma}
\newtheorem{corollary}[theorem]{Corollary}
\theoremstyle{definition}
\newtheorem{example}[theorem]{Example}
\theoremstyle{remark}
\newtheorem{remark}[theorem]{Remark}
\newtheorem{proposition}[theorem]{Proposition}
\numberwithin{equation}{section}
\def \m {{l}} 
\def \g {{\gamma}}
\def \G {{\Gamma}}
\def \l {{\lambda}}
\def \a {{\alpha}}
\def \R {{\mathbb R}}
\def \H {{\mathbb H}}
\def \N {{\mathbb N}}
\def \C {{\mathbb C}}
\def \Z {{\mathbb Z}}
\def \e {{\epsilon }}
\def \GinfmodG {{\Gamma_{\!\!\infty}\!\!\setminus\!\Gamma}}
\def \GmodH {{\Gamma\backslash\H}}
\def \psl  {{\hbox{PSL}_2( {\mathbb R})} }
\def \L  {{\hbox{L}^2}}
\def \w {{\omega}}
\newcommand{\norm}[1]{\left\lVert #1 \right\rVert}
\newcommand{\abs}[1]{\left\lvert #1 \right\rvert}
\newcommand{\inprod}[2]{\left \langle #1,#2 \right\rangle}
\newcommand{\modsym}[2]{\left \langle #1,#2 \right\rangle}
\DeclareMathOperator*{\res}{res}
\begin{document}
\title[Higher order Fermi's Golden Rule]{Dissolving cusp forms: Higher order Fermi's Golden Rules}\author{Yiannis N. Petridis}
\address{Department of Mathematics, University College London, Gower Street, London WC1E 6BT, United Kingdom\\ Tel: +44 20-7679-7897, Fax: +44 20-7383-5519}\email{petridis@math.ucl.ac.uk}
\author{Morten S. Risager}\address{Department of Mathematical
  Sciences, University of Copenhagen, Universitetsparken 5, 2100 Copenhagen \O, Denmark } \email{risager@math.ku.dk}
 \subjclass{11F72\and  58J50}

\date{\today}
\maketitle

\begin{abstract}
For a hyperbolic surface embedded eigenvalues of the Laplace operator are unstable and tend to become resonances. A sufficient dissolving condition was identified by Phillips--Sarnak and is 
elegantly expressed in Fermi's Golden Rule. We prove formulas for higher approximations and obtain necessary and sufficient conditions for dissolving a cusp form with eigenfunction $u_j$
into a resonance. In the framework of perturbations in character varieties, we relate the result to the special values of  the $L$-series  $L(u_j\otimes F^n, s)$. This is the  
Rankin-Selberg convolution of $u_j$ with   $F(z)^n$, where $F(z)$ is
the antiderivative of a weight $2$ cusp form. In an example we show
that the  above-mentioned conditions  force  the
embedded eigenvalue to become a resonance in a punctured neighborhood of
the deformation space.

\end{abstract}
\section{Introduction}
For a hyperbolic surface with cusps the embedded eigenvalues of the Laplace operator $\Delta$ in the continuous spectrum 
are unstable. This is manifested by Fermi's Golden Rule developed in
\cite{phillips-sarnak2}. We describe the result in the simplest case
of a surface with one cusp and an eigenvalue of multiplicity one. Let $\lambda_j=1/4+r_j^2$ be an embedded eigenvalue with $r_j\in \R\setminus \{0\}$ and the corresponding $L^2$-normalized eigenfunction (Maa{\ss}  cusp form)
$u_j(z)$. Let $E(z, s)$ be the Eisenstein series, which on the critical line $\Re (s)=1/2$ is a generalized eigenfunction for $\Delta$, so that $E(z, 1/2+ir_j)$ corresponds in the same eigenvalue as the Maa{\ss} cusp form.  We set $s_j=1/2+ir_j$.
In \cite{phillips-sarnak1} Phillips and Sarnak identified a condition
that turns $\lambda_j$ into a resonance in Teichm\"uller space,
i.e. dissolving $\lambda_j$ into a resonance. In
\cite{sarnak1} Sarnak identified a similar condition for character varieties. 
Let $\Delta ^{(1)}$ denote the infinitesimal variation of the family
of Laplacians in either perturbation. Then the dissolving condition --
usually called the Phillips--Sarnak condition -- is
\begin{equation}\label{phillipssarnakcondition}\langle \Delta^{(1)} u_j, E(z, 1/2+ir_j)\rangle \ne 0.\end{equation}
In \cite{phillips-sarnak2} Phillips and Sarnak identified 
the dissolving condition in terms of the speed that the cuspidal
eigenvalue leaves the line $\Re (s)=1/2$ to become a resonance to the
left half-plane. If $s_j(\e)$ denotes the position of the resonance or
embedded cusp form, with perturbation series 
\begin{equation}
s_j(\e)=s_j+s_j^{(1)}(0)\e+\frac{s_j^{(2)}(0)}{2!}\e^2+\cdots ,\end{equation}
 then
\begin{equation}\label{fermi}\Re s_j^{(2)}(0)=-\frac{1}{4r_j^2}\left|\langle \Delta^{(1)} u_j, E(z, 1/2+ir_j)\rangle\right|^2.\end{equation}
Our aim in this paper is to investigate what happens when the
expression (\ref{fermi}) vanishes, or equivalently: what happens  if the Phillips-Sarnak
condition is \emph{not} satisfied.

The proof of (\ref{fermi}) in \cite{phillips-sarnak2} uses the Lax-Phillips scattering theory as developed for automorphic functions, see \cite{lax-phillips}.
The crucial ingredient is provided by 
the cut-off wave operator $B$. Its spectrum (on appropriate spaces) coincides with the singular set (counting multiplicities). It includes  the embedded eigenvalues and the resonances. 
The motion of an embedded eigenvalue depending on  the perturbation
parameter $\e$ on the complex place $\C$ can be identified as the
motion of an eigenvalue of $B$. Given that Phillips and Sarnak proved
that regular perturbation theory applies to this setting, it follows that an embedded eigenvalue moves (with at most algebraic singularities) as function of $\e$, either remaining a cuspidal eigenvalue or becoming a resonance. Eq. (\ref{fermi}) follows using standard perturbation theory techniques. 
Balslev provided a different proof of Eq. (\ref{fermi})  in \cite{balslev} by introducing the technique of analytic dilations and imitating the setting of Fermi's Golden Rule for the helium atom, see \cite{simon}.
A slightly modified version of the application of perturbation theory is provided in \cite{petr}, using the formulas in \cite[p.~79]{kato}.

Once the dissolving condition had been identified, Phillips and Sarnak \cite{phillips-sarnak1} expressed it as a special value of a Rankin--Selberg convolution of $u_j$ with the holomorphic cusp form $f$ generating the deformation. These special values have been subsequently studied \cite{deshouillers-iwaniec, dips, luo} with the aim of showing that a generic surface with cusps has {\lq few\rq} embedded eigenvalues in the sense of Weyl's law.  

A different line of approach has been to develop alternate
perturbation settings, where the condition to check is easier to
understand. Wolpert, Phillips and Sarnak, and
Balslev and Venkov succeded in investigating Weyl's law this way. 
\cite{wolpert, phillips-sarnak3, balslev-venkov, balslev-venkov2}.

A more recent development came through the numerical investigation of
the poles of Eisenstein series by Avelin \cite{avelin}. Working with
the Teichm\"uller space of $\Gamma_0(5)$, she found a fourth order
contact of $s_j(\e)$ with the unitary axis $\Re (s)=1/2$.  It is easy
to explain why certain directions in the moduli space will not satisfy
the Phillips-Sarnak condition (\ref{phillipssarnakcondition}): If the
dimension of the moduli space is at least $2$, then the map $f\to
\langle \Delta^{(1)} u_j, E(z, 1/2+ir_j)\rangle $ is linear,
therefore, is has nontrivial kernel. Avelin also identified
numerically the most suitable curve that the singular point follows in
the left half-plane. This work (along with the work of Farmer and
Lemurell \cite{farmerlemurel}) motivated us to investigate  whether one can identify higher order Fermi-type conditions that will explain what happens in this case. We answer affirmatively:
we find conditions that guarantee that an embedded eigenvalue becomes
a resonance. 

For this purpose we introduce 
 the perturbation series of  the generalized eigenfunctions $D(z, s, \e)$, with $D(z, s, 0)=E(z, s)$:
 \begin{equation}
 D(z, s, \e)=D(z, s, 0)+D^{(1)}(z, s )\e+\frac{D^{(2)}(z, s)}{2!}\e^2+\cdots .\end{equation}
 \begin{theorem}\label{higher-fermi-intro}
 Assume that for $k=0, 1, \ldots , n-1$ the functions $D^{(k)}(z, s)$
 are regular at a simple cuspidal eigenvalue $s_j=1/2+ir_j$.
 Then $D^{(n)}(z, s)$ has at most a first order pole at $s_j$. 
{ \begin{enumerate}
\item If $D^{(n)}(z, s)$ has a pole at $s_j$, then the embedded eigenvalue becomes a resonance. \item Moreover, with $\norm{\cdot}$ the standard $L^2$-norm,
 \begin{equation*}
 \Re {s}_j^{(2n)}(0)=-\frac{1}{2}\binom{2n}{n}\norm{
   \res_{s=s_j}D^{(n)}(z, s)                }^2,\end{equation*}
and this is the leading term in the expansion of $\Re s_j(\e)$,
i.e. $\Re {s}_j^{(j)}(0)=0$ for $j<2n$.
\end{enumerate} }
\end{theorem}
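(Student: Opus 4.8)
The plan is to combine a differentiated form of the defining equation for $D(z,s,\e)$ with the scattering-theoretic description of the moving singular point used by Phillips--Sarnak and in \cite{petr}.

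I would first differentiate the identity $(\Delta(\e)+s(1-s))D(z,s,\e)=0$ in $\e$ exactly $n$ times and put $\e=0$. Writing $\Delta(\e)=\Delta+\sum_{k\ge 1}\Delta^{(k)}\e^{k}/k!$ and applying Leibniz' rule gives
\begin{equation*}
(\Delta+s(1-s))D^{(n)}(z,s)=-\sum_{k=1}^{n}\binom{n}{k}\Delta^{(k)}D^{(n-k)}(z,s).
\end{equation*}
Since the perturbation is supported in a fixed compact set, the right-hand side is compactly supported, and by hypothesis each $D^{(n-k)}$ with $k\ge 1$ is regular at $s_j$, so the source is regular there. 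The unperturbed, meromorphically continued resolvent $(\Delta+s(1-s))^{-1}$ has near $s_j$ only a first order pole, with residue equal to $(1-2s_j)^{-1}$ times the orthogonal projection onto $u_j$; this is where simplicity of the eigenvalue is used. Applying it to the source shows that $D^{(n)}(z,s)$ has at most a first order pole at $s_j$ and that $\res_{s=s_j}D^{(n)}(z,s)=c_n u_j$ for a scalar $c_n$, so that the residue is an honest $L^2$ function with $\norm{\res_{s=s_j}D^{(n)}}=\abs{c_n}$. This is the opening assertion, and it reduces part (1) to proving $\Re s_j^{(2n)}(0)<0$ whenever $c_n\ne 0$, which falls out of part (2).

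For part (2) I would follow \cite{petr}: the singular point $s_j(\e)$ near $s_j$ is an eigenvalue of the cut-off wave operator $B(\e)$, to which regular perturbation theory applies by \cite{phillips-sarnak1,phillips-sarnak2} (equivalently, $s_j(\e)$ is the pole of the continued resolvent, hence of $D(z,s,\e)$, closest to $s_j$), and the formulas of \cite[p.~79]{kato} give Rayleigh--Schr\"odinger-type expansions of $s_j(\e)$ and the corresponding eigenfunction, hence of $\lambda_j(\e)=s_j(\e)(1-s_j(\e))$. The point is that, for $k<n$, the $k$-th order correction to the eigenfunction coincides, up to its $u_j$-component, with $D^{(k)}(z,s_j)$, which by hypothesis is a genuine function. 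Hence each coefficient $\lambda_j^{(m)}(0)$ with $m<2n$ is built from the symmetric operators $\Delta^{(k)}$ paired with honest functions, and --- together with the reflection symmetry forced on the problem by self-adjointness of $\Delta(\e)$ for real $\e$ --- this makes $\lambda_j^{(m)}(0)\in\R$ for $m<2n$. Since $s_j(\e)=1/2+i\sqrt{\lambda_j(\e)-1/4}$, so that $\Re s_j(\e)-\tfrac12=-\Im\sqrt{\lambda_j(\e)-1/4}=-\tfrac{1}{2r_j}\Im(\lambda_j(\e)-\lambda_j)+\cdots$, reality of $\lambda_j^{(m)}(0)$ for $m<2n$ forces $\Re s_j^{(m)}(0)=0$ there.

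The heart of the matter is the order-$2n$ coefficient. Here the $n$-th order correction is no longer in $L^2$: it carries the first order pole of $D^{(n)}(z,s)$ at $s_j$, i.e.\ it couples $u_j$ to the continuous spectrum through the Eisenstein series, and the term of $\lambda_j^{(2n)}(0)$ that pairs this correction with its complex conjugate has to be renormalised by a Maass--Selberg truncation-and-Stokes computation in the cusp. I expect this to produce the first non-real contribution, of the form $\Im\lambda_j^{(2n)}(0)=r_j\binom{2n}{n}\norm{\res_{s=s_j}D^{(n)}}^{2}$: the binomial coefficient counts the ways of distributing the $2n$ derivatives between the two factors, and the norm-squared appears because the residue points in the direction $u_j$ and the boundary term evaluates to a square. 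Substituting into the relation above gives $\Re s_j^{(2n)}(0)=-\tfrac{1}{2r_j}\Im\lambda_j^{(2n)}(0)=-\tfrac12\binom{2n}{n}\norm{\res_{s=s_j}D^{(n)}}^{2}$ with all lower real parts vanishing, which is the claim; strict negativity when $c_n\ne 0$ yields part (1). I expect this order-$2n$ renormalisation to be the real obstacle: one must make precise the singular $n$-th order correction, justify interchanging the branch cut of the continued resolvent --- the source of the Eisenstein series and of every imaginary part --- with the perturbation expansion, and control the non-$L^2$ cusp growth of the functions built from the $D^{(k)}(z,s_j)$. The reality statement for $m<2n$ is a secondary but genuinely delicate point, precisely because $E(z,s_j)$ and the $D^{(k)}(z,s_j)$ are complex-valued.
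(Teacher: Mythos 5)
Your opening step --- applying Leibniz to $(\Delta(\e)+s(1-s))D(z,s,\e)=0$, inverting with the continued resolvent, and using the rank-one pole of $R(s)$ at $s_j$ to conclude that $D^{(n)}(z,s)$ has at most a simple pole with residue a multiple of $u_j$ --- is exactly the paper's derivation of its equation (\ref{Dn-residue}), so that part is sound. The gap is in part (2). Your plan is to run a Rayleigh--Schr\"odinger expansion for the cut-off wave operator and extract $\Im\lambda_j^{(2n)}(0)$ from a ``Maass--Selberg truncation-and-Stokes computation,'' but that computation is never performed: the key identity $\Im\lambda_j^{(2n)}(0)=r_j\binom{2n}{n}\norm{\res_{s=s_j}D^{(n)}}^2$ is introduced with ``I expect this to produce,'' and you yourself flag it as the real obstacle. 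Two of the supporting claims are also problematic as stated. First, the identification of the $k$-th order eigenfunction correction with $D^{(k)}(z,s_j)$ conflates two different objects: the corrections to $u_j$ are built by applying reduced resolvents to terms involving $u_j$, whereas the $D^{(k)}$ are built by applying $R(s)$ to terms involving $E(z,s)$; the only link between them is through the inner products $\inprod{L^{(k)}D^{(n-k)}}{u_j}$, and that link is what has to be established. Second, the reality of $\lambda_j^{(m)}(0)$ for $m<2n$ cannot be ``forced by self-adjointness of $\Delta(\e)$'': the operator $B(\e)$ is not self-adjoint, its eigenvalues genuinely leave the real axis (that is the whole phenomenon), and reality to order $2n-1$ is equivalent to the statement $\Re s_j^{(m)}(0)=0$ you are trying to prove, so the argument as written is circular.

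The paper's proof avoids the wave operator entirely after invoking Theorem \ref{singularsetproperties}. It works with the scattering function: Proposition \ref{scatteringphin} expresses $\phi^{(n)}(s)$ via the $D^{(k)}$, and the crucial technical input is Proposition \ref{phireduction-result}, a family of integration-by-parts identities that rewrite $(2s-1)\phi^{(l)}(s)$ using only $D^{(q)}$ with $q\leq\max(i,l-i-1)$; choosing $i=\lfloor l/2\rfloor$ shows $\phi^{(l)}$ is regular at $s_j$ for $l<2n$, and choosing $i=n$ for $l=2n$ produces the simple pole with residue $-\phi(s_j)\binom{2n}{n}\norm{\res_{s=s_j}D^{(n)}}^2$ --- this is where the binomial coefficient actually comes from. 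The quantity $2m\Re(\hat s(\e)-s_j)$ is then captured as $-\frac{1}{2\pi i}\int_{\partial B(s_j,u)}(s-s_j)\phi'(s,\e)\phi(s,\e)^{-1}\,ds$ after a symmetrization using the functional equation $\phi(s,\e)\phi(1-s,\e)=1$, and differentiating $2n$ times in $\e$ leaves a single residue computation. If you want to salvage your route, you would need to supply an analogue of Proposition \ref{phireduction-result}; without it, neither the vanishing for $m<2n$ nor the coefficient $\binom{2n}{n}$ is established.
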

 \begin{corollary}\label{cykel-intro}
 An embedded simple eigenvalue $s_j$ becomes a resonance if and only if for some $m\in \N$ the function $D^{(m)}(z, s)$ has a pole at $s_j$.
 \end{corollary}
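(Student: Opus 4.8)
The plan is to obtain Corollary~\ref{cykel-intro} as a formal consequence of Theorem~\ref{higher-fermi-intro}: essentially all of the analytic content is already packaged in that theorem, so what remains is a bookkeeping argument in the two directions, plus a single appeal to the analyticity of the perturbed eigenvalue. First I would fix notation: write $D^{(0)}(z,s)=D(z,s,0)=E(z,s)$, and note that $D^{(0)}(z,s)$ is holomorphic on $\Re(s)=1/2$ away from $s=1/2$, so it is regular at $s_j=1/2+ir_j$ because $r_j\neq 0$.

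For the implication ``$D^{(m)}(z,s)$ has a pole at $s_j$ for some $m\in\N$ $\Rightarrow$ $s_j$ becomes a resonance'', I would take $n$ to be the least such $m$; by the previous remark the case $m=0$ cannot occur, so $n\geq 1$, and by minimality $D^{(k)}(z,s)$ is regular at $s_j$ for $k=0,1,\dots,n-1$. Hence the hypotheses of Theorem~\ref{higher-fermi-intro} hold for this $n$, while $D^{(n)}(z,s)$ has a pole at $s_j$; part~(1) of the theorem then gives precisely that the embedded eigenvalue dissolves into a resonance.

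For the converse I would argue by contraposition. Assume $D^{(m)}(z,s)$ is regular at $s_j$ for \emph{every} $m\in\N$. Then the hypothesis of Theorem~\ref{higher-fermi-intro} is met for every $n\geq 1$, and since $D^{(n)}(z,s)$ has no pole we get $\res_{s=s_j}D^{(n)}(z,s)=0$; part~(2) then yields both $\Re s_j^{(2n)}(0)=-\frac{1}{2}\binom{2n}{n}\norm{\res_{s=s_j}D^{(n)}(z, s)}^2=0$ and $\Re s_j^{(k)}(0)=0$ for all $k<2n$. Letting $n$ run over $\N$ shows $\Re s_j^{(k)}(0)=0$ for every $k\geq 1$. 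Since the perturbation of the simple embedded eigenvalue $s_j$ is real-analytic in $\e$ near $0$ (regular perturbation theory applies, as recalled in the introduction), the vanishing of every positive-order Taylor coefficient forces $\Re s_j(\e)\equiv \Re s_j=1/2$ on a neighbourhood of $0$; thus $s_j(\e)$ never leaves the line $\Re(s)=1/2$ and remains a cuspidal eigenvalue, i.e. does not become a resonance. This is the contrapositive of the remaining implication, completing the proof.

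The one step that is not purely formal, and the one I would spell out, is this last passage: from ``all derivatives of $\Re s_j(\e)$ at $0$ vanish'' to ``$s_j$ is not a resonance''. It relies on two facts already in force for Theorem~\ref{higher-fermi-intro}: that $\e\mapsto s_j(\e)$ is genuinely analytic on a full neighbourhood of $0$ (so a vanishing Taylor expansion means the function is constant, not merely flat), and that, in the Lax--Phillips picture, a point of the singular set staying on $\Re(s)=1/2$ is an embedded cusp eigenvalue whereas becoming a resonance means exactly $\Re s_j(\e)<1/2$ for small $\e\neq 0$. Granting these, no further computation is needed and Theorem~\ref{higher-fermi-intro} supplies the rest.
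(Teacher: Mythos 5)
Your proof is correct and follows essentially the same route as the paper: both directions are read off from Theorem \ref{higher-fermi-intro} combined with the real analyticity of $s_j(\e)$ (Theorem \ref{singularsetproperties}) and the fact that a singular point cannot move into $\Re(s)>1/2$. The only differences are cosmetic: the paper re-derives part (1) inside the corollary's proof via the observation that the leading nonvanishing Taylor coefficient of $\Re s_j(\e)$ must have even order, whereas you cite part (1) directly; and for the necessity direction the paper argues from ``remains a cusp form'' to ``all derivatives vanish,'' while you run the contrapositive ``no poles $\Rightarrow$ all derivatives of $\Re s_j(\e)$ vanish $\Rightarrow$ $\Re s_j(\e)\equiv 1/2$ by analyticity,'' which is the implication actually required and makes explicit the one non-formal step (a flat real-analytic function is constant).
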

\begin{remark}\label{phillips-sarnakcondition} For $n=1$ the condition in the theorem is the classical Fermi's Golden Rule, see (\ref{Dn-residue}) with $n=1$. Our method provides a new proof of this well-known result without using energy inner products, see \cite{phillips-sarnak2} but assuming Theorem \ref{singularsetproperties}.
\end{remark}
\begin{remark} The assumptions of the theorem may equivalently be
  stated as  $\Re(s^{(j)}(0))=0$ for $j=1,\ldots, 2n-1$. So in the
  theorem we  are really
  assuming that the embedded eigenvalue does not become a resonance to
  order \emph{less} than $2n$.
\end{remark}

\begin{remark} At first glance it may seem that the condition identifies one perturbation object with another, equally unknown. However, 
the condition can surprisingly also be expressed as the nonvanishing at a special point of a Dirichlet
series. The relevant series is more
complicated than the standard  Rankin--Selberg convolution. In the
case of character varieties and $n=2$ this Dirichlet series is 
\begin{equation}
\sum_{n=1}^{\infty} \left(  \sum_{k_1+k_2=n}\frac{a_{k_1}}{k_1}\frac{a_{k_2}}{k_2}             b_{-n}    \right)\frac{1 }{n^s},
\end{equation}
where $a_n$ are the Fourier coefficients of $f$, and $b_n$ are the coefficients of  $u_j$.

Even more  $D^{(n)}(z, s)$  has been the object of intense investigation by Goldfeld, O'Sullivan, Chinta, Diamantis,  the authors, Jorgenson et. al. \cite{goldfeld1, goldfeld2,  chinta, diamantis, osullivan, petridis, petridis-risager, jorgenson}.
It can be defined for $\Re (s)>1$ as
\begin{equation}\label{dseries}
D^{(n)}(z, s)=\sum_{\GinfmodG}\left(2\pi i \int_{i\infty}^{\gamma z} \Re f(w)\, dw\right)^n\Im(\gamma z)^s.
\end{equation}
 In fact, in \cite{osullivan2, petridis} it was proved that
$${  \rm Res}_{s=s_j}D^{(1)}(z, s)= \langle \Delta^{(1)} u_j, E(z, 1/2+ir_j)\rangle u_j(z),$$
 which gives the Phillips--Sarnak condition when one takes the
 $L^2$-norm.   This motivated us to investigate the residues of 
 $D^{(n)}(z, s)$ and derive  Theorem \ref{higher-fermi-intro}. The
 character perturbation setup  is analyzed in section \ref{character-varieties}.
\end{remark}

\begin{remark} The simplicity of $s_j$ is not important. We state  the
  theorem for any multiplicity of $s_j$ as Theorem \ref{maintheorem} in Section \ref{introductorymaterial}. 
\end{remark}
\begin{remark} This theorem gives an algorithmic method of checking
  whether in a particular direction of moduli space an embedded
  eigenvalue becomes a resonance. If $D^{(1)}(z, s)$ is regular at
  $s_j$, which is equivalent to the vanishing of the Phillips--Sarnak
  condition, then the embedded eigenvalue stays an eigenvalue to second order and we need to check the higher order condition $D^{(2)}(z, s)$. If this is regular one looks at the next term in the perturbation series of $D(z, s, \e)$ etc. 
\end{remark}

\begin{remark}
There is an easy argument that explains why a pole of $D^{(n)}(z, s)$ at $s_j$ forces the embedded eigenvalue to become a resonance.
The argument  is sketched in  section \ref{heustics-dissolving}.
\end{remark}

In the last section we investigate  whether cusp forms
could be stable under perturbations along certain paths in the
deformation space. Our main result is for groups with certain
symmetries e.g. (extensions of) $
\Gamma_0(p)$. Even-odd considerations guarantee the existence of Maa{\ss} cusp forms for the unperturbed problem.  In Theorem \ref{destruction-in-a-neighborhood} we
give sufficient conditions  to ensure that there is
\emph{no} path along which a cusp form remains. As a result
the cusp form is
isolated in the deformation space. If
the conditions are not satisfied, the cusp form  indeed remains in a
specific determined line in the deformation space.

In this article (Section \ref{character-varieties}) we investigate  the case of character varieties. The application of Theorem \ref{higher-fermi-intro} to the  analysis of Teichm\"uller deformations
will appear in \cite{petridis-risager2}.
 
\thanks We would like to thank D. Hejhal, A.  Str\"ombergsson,
  A. Venkov, E. Balslev, D. Mayer and P. Sarnak for helpful
  discussions and encouragements.

\section{Background and preliminaries}
\label{introductorymaterial}
An admissible surface, see  \cite{mueller2, mueller},  is a two dimensional non-compact Riemannian manifold $M$ of finite area with hyperbolic ends, i.e. there is a compact set $M_0$ such that $M$ has a decomposition  $$M=M_0\cup \bigcup_{{\mathfrak a}=1}^{\mathfrak k}Z_{\mathfrak a}$$
and
$$Z_{\mathfrak a}\cong S^{1}\times [c_{\mathfrak a}, \infty), \quad
c_{\mathfrak a}>0$$ carries coordinates $(x_{\mathfrak a}, y_{\mathfrak a})\in S^{1}\times
[c_{\mathfrak a}, \infty)$ and is 
equipped with the hyperbolic metric
$$\frac{dx_{\mathfrak a}^2+dy_{\mathfrak a}^2}{y_{\mathfrak a}^2}.$$
The end $Z_{\mathfrak a}$ is called a cusp. 
 M\"uller \cite{mueller2, mueller} has worked out the spectral theory of admissible surfaces. 
The Laplace operator $\Delta$, defined originally on compactly
supported smooth functions,  has a unique self-adjoint extension on $L^2(M)$, which we  denote by $L$.
The spectrum of $L$ consists of discrete spectrum (eigenvalues $\lambda_j=s_j(1-s_j)$) with 
$$0=\lambda_0<\lambda_1\le \lambda_2\le \ldots$$
(a finite or infinite set accumulating at $\infty$) and 
continuous spectrum $[1/4, \infty)$ of multiplicity $\mathfrak k$, provided by generalized eigenfunctions $E_{\mathfrak a}(z, s)$. These can to be constructed as in \cite{mueller, deverdiere} and, only in the special case of hyperbolic surfaces, are given by series of the type
$$E(z, s)=\sum_{\gamma\in \GinfmodG}\Im (\gamma z)^s.$$
We will call the generalized eigenfunctions Eisenstein series.

Each $E_{\mathfrak a}(z, s)$
 \begin{enumerate}\item admits  meromorphic continuation to $\C$ with poles in $\Re (s)<1/2$ or on the interval $(1/2, 1]$,
\item  satisfies the eigenvalue equation
$$\Delta E_{\mathfrak a}(z, s)+s(1-s)E_{\mathfrak a}(z, s)=0,\quad
\textrm{and }$$
\item satisfies the functional equation
\end{enumerate}
\begin{equation*}E_{\mathfrak a}(z, s)=\sum_{\mathfrak b =1}^{\mathfrak k}\phi_{\mathfrak a\mathfrak b}(s) E_{\mathfrak b}(z, 1-s)
\end{equation*}
for some functions $\phi_{\mathfrak a \mathfrak b}(s)$. The
determinant of the scattering matrix $\Phi (s)=(\phi_{\mathfrak a
  \mathfrak b}(s))_{\mathfrak a,\mathfrak b=1}^{\mathfrak k}$ is denoted $\phi (s)$.
The poles of $\phi (s)$ are called resonances.
The scattering matrix satisfies a functional equation  $\Phi(s)\Phi
(1-s)=I_{\mathfrak k\times \mathfrak k}$, and, moreover, 
\begin{equation}\label{phifunctional}\Phi (\bar s)=\overline{\Phi (s)}, \quad \Phi (s)^*=\Phi(\bar s).\end{equation}
The resolvent of the Laplace operator $R(s)=(\Delta+s(1-s))^{-1}$
defined on $L^2 (M)$ for $\Re (s)>1/2$, $s\not\in\hbox{spec}(L)$, admits a meromorphic continuation to $\C$, if we restrict the domain to a smaller space, e.g. $C_c^{\infty}(M)$, compactly supported functions on $M$.
The  limiting absorption principle holds:  i.e. the resolvent  kernel (Green's function) $r(z, z', s)$  satisfies
\begin{equation}\label{limit-absorption}
r(z, z', s)-r(z, z', 1-s)=\frac{1}{1-2s}\sum_{\mathfrak a=1}^{\mathfrak k}E_{\mathfrak a}(z, s)E_{\mathfrak a}(z', 1-s).\end{equation}
At a spectral point $s_j$ with eigenvalue $s_j(1-s_j)>1/4$ the resolvent kernel has a pole described by the Laurent expansion
\begin{equation}\label{resolventpolarstructure}r(z, z', s)=\frac{P}{s(1-s)-s_j(1-s_j)}+\cdots,\end{equation}
where $P$ is the spectral projection to the eigenspace with eigenvalue $s_j(1-s_j)$.

An admissible surface has generically finitely many discrete eigenvalues, a result due to Colin de Verdi\`ere
\cite{deverdiere}, and a consequence of the infinite dimensionality of the admissible metrics i.e. arbitrary metrics on $M_0$.
Determining the number of eigenvalues is much trickier if we demand
that $M$ is hyperbolic, as the Teichm\"uller space is finite dimensional. 
However, the perturbation setup works in the case of admissible surfaces and they appear as a technical devise in Teichm\"uller perturbations.

We are interested in perturbations of the Laplace operator $L$ on $M$.
The simplest kind of such arises from a perturbation of the Riemannian
metric inside $M_0$ (compact perturbations). Let $\e\in(-\e_0, \e_0)$. Let $g(\e)$ be a real
analytic family
of metrics on $M$, with $g(\e)=g(0)$ on $M\setminus
M_0$. The Laplacian then admits a real analytic expansion
$$L(\e)=L(0)+ \e L^{(1)}+\frac{\e^2}{2}L^{(2)}+\cdots .$$
As the family of metrics agree with $g(0)$ up in the cusps, the
Laplacian does not change up in the cusps and the operators $L^{(i)}$
are compactly supported operators, since $L^{(i)}f$ has support in
$M_0$ for every (smooth) function $f$.
We denote by $D(z,s,\e)$ any of the generalized eigenfunctions
$E_{\mathfrak{a}}(z,s,\e)$ of $L(\e)$.

\begin{theorem}\label{eisensteinregularity}
The family $D(z, s, \e)$ is real analytic in $\e$ for $\e\in(-\e_0, \e_0)$ and meromorphic in $s\in {\C}\setminus\{1/2\}$. 
The $n$-th derivative in $\e$ is given by
\begin{equation}\label{Dnresolvent}D^{(n)}(z, s)=-R(s)\sum_{i=1}^n\binom{ n}{ i} L^{(i)} D^{(n-i)}(z, s).\end{equation}
\end{theorem}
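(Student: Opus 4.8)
The plan is to establish the real-analyticity and the meromorphy separately, then derive the recursion~\eqref{Dnresolvent} by differentiating the eigenvalue equation and using the limiting absorption principle to invert $\Delta + s(1-s)$ on the relevant right-hand side.

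\textbf{Step 1: Setting up the perturbation.} First I would fix a cusp $\mathfrak a$ and write $D(z,s,\e) = E_{\mathfrak a}(z,s,\e)$. The defining property is that $D(\cdot,s,\e)$ solves $(L(\e) + s(1-s))D(z,s,\e) = 0$ with prescribed growth $y_{\mathfrak b}^s\delta_{\mathfrak a\mathfrak b} + (\text{scattering term})y_{\mathfrak b}^{1-s}$ in each cusp $Z_{\mathfrak b}$. Since $L^{(i)}$ is compactly supported in $M_0$, the leading-order growth in the cusps is $\e$-independent, so $E(z,s,\e) - E(z,s,0)$ has only the $y_{\mathfrak b}^{1-s}$ behavior (the scattering contribution) and no $y_{\mathfrak b}^s$ term. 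Hence for $\Re(s) > 1/2$, $s\notin\mathrm{spec}(L(\e))$, the difference $D(z,s,\e) - D(z,s,0)$ is $L^2$, which lets me write it via the resolvent.

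\textbf{Step 2: The recursion.} Apply $\partial_\e^n$ formally to $(L(\e)+s(1-s))D(z,s,\e)=0$ using Leibniz:
\begin{equation*}
\sum_{i=0}^n \binom{n}{i} L^{(i)} D^{(n-i)}(z,s) + s(1-s) D^{(n)}(z,s) = 0.
\end{equation*}
Isolating the $i=0$ term, $L^{(0)} = \Delta$, gives
\begin{equation*}
(\Delta + s(1-s)) D^{(n)}(z,s) = -\sum_{i=1}^n \binom{n}{i} L^{(i)} D^{(n-i)}(z,s).
\end{equation*}
The right-hand side is compactly supported (each $L^{(i)}$, $i\ge 1$, kills growth at the cusps), so I can apply $R(s)$ to it: the claim $D^{(n)} = -R(s)\sum_{i=1}^n\binom{n}{i}L^{(i)}D^{(n-i)}$ follows provided I know (a) $D^{(n)}$ itself has the right $L^2$-type / growth behavior so that it is the \emph{unique} solution produced by $R(s)$, and (b) the formal differentiation is justified — i.e.\ the family really is real-analytic in $\e$.

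\textbf{Step 3: Real-analyticity --- the main obstacle.} This is where the work lies. The cleanest route is induction on $n$: assuming $D^{(0)},\dots,D^{(n-1)}$ exist, are real-analytic near $\e$, and meromorphic in $s$, \emph{define} $D^{(n)}$ by the formula $-R(s)\sum_{i=1}^n\binom{n}{i}L^{(i)}D^{(n-i)}$ (legitimate since the argument is compactly supported and $R(s)$ meromorphically continues on $C_c^\infty$ by the stated properties of the resolvent), and then verify that the Taylor polynomial $\sum_{k\le n} D^{(k)}\e^k/k!$ approximates $D(z,s,\e)$ to order $n+1$ in $\e$. Equivalently, one sets up a fixed-point / Neumann-series argument: $D(z,s,\e) = D(z,s,0) - R(s)\big(L(\e)-L(0)\big)D(z,s,\e)$, valid where $\|R(s)(L(\e)-L(0))\| < 1$, and expands the geometric series in $\e$; meromorphy in $s$ is inherited from $R(s)$, and term-by-term analyticity in $\e$ gives the expansion with the coefficients identified by matching powers — this matching reproduces exactly~\eqref{Dnresolvent}. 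The delicate points are: controlling $R(s)(L(\e)-L(0))$ as a bounded operator on a suitable space (on $C_c^\infty$ with a weighted norm, or on $L^2$ away from the spectrum of $L(\e)$, uniformly for small $\e$), handling the spectral points $s_j$ where $R(s)$ has poles (the meromorphy statement absorbs this), and justifying that the $\e$-derivatives of the generalized eigenfunction in the cusps behave correctly — which is precisely what the $\e$-independence of the cusp geometry buys us. I expect citing M\"uller's spectral theory \cite{mueller, mueller2} together with the analytic Fredholm / Neumann argument to be the backbone, with the genuinely new content being the bookkeeping that matches the $\e^n$-coefficient to the stated recursion.
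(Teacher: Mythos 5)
Your derivation of the recursion \eqref{Dnresolvent} is essentially identical to the paper's: differentiate $(L(\e)+s(1-s))D(z,s,\e)=0$ by Leibniz, isolate the $i=0$ term, invert with $R(s)$ for $\Re(s)>1/2$ using that the right-hand side is compactly supported, and extend to $\C$ by meromorphic continuation of the resolvent on $C_c^\infty$. Where you genuinely diverge is on the real-analyticity in $\e$. The paper does not argue this from a resolvent identity at all: it appeals to the construction of the generalized eigenfunctions via pseudo-Laplacians and the observation that every step of that construction can be differentiated in $\e$, deferring the details to \cite{petridis} and \cite{bruggeman}. You instead propose a Lippmann--Schwinger/Neumann-series argument, $D(z,s,\e)=D(z,s,0)-R(s)\bigl(L(\e)-L(0)\bigr)D(z,s,\e)$, inverting $I+R(s)(L(\e)-L(0))$ for small $\e$ on a weighted space. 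This is a legitimate and arguably more self-contained route (it simultaneously constructs $D(z,s,\e)$, proves analyticity, and identifies the Taylor coefficients by matching powers, so \eqref{Dnresolvent} falls out of the same computation), but it requires you to actually nail down the Banach space on which $R(s)(L(\e)-L(0))$ is a bounded operator of norm $O(\e)$ --- including the Sobolev bookkeeping, since $L(\e)-L(0)$ is a second-order operator --- and to verify that the series solution has the correct cusp asymptotics to coincide with the Eisenstein series; the paper's route outsources exactly this analytic work to the pseudo-Laplacian machinery. Both are sketches at the same level of rigor; yours trades a citation for a (plausible but unexecuted) functional-analytic estimate.
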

\begin{proof}[Sketch of proof]
The real analyticity follows from the construction of the generalized eigenfunctions $D(z, s, \e)$ using pseudo-Laplacians and the fact that the construction can be differentiated at every step in $\e$. This is explained (for the first derivative at least) in \cite{petridis}, see also \cite{bruggeman}. The formula for $D^{(n)}(z, s)$ can be proved by 
differentiating $(L(\e)+s(1-s))D(z, s, \e)=0$ to  get
$$\sum_{i=0}^n\binom{n}{i}\left.\frac{d^i}{d\e^i}(L(\e)+s(1-s))\right|_{\e=0}D^{(n-i)}(z, s)=0.$$
 Then we isolate  the term $D^{(n)}(z, s)$ using $R(s)$ for $\Re
 (s)>1/2$. Since the operators $L^{(i)}$ are compactly supported, the
 resolvent is applied to a compactly supported function and the
 right-hand side of (\ref{Dnresolvent}) can be meromorphically
 continued to $\C$.  The identity (\ref{Dnresolvent}) holds on $\C$ by
 the principle of analytic continuation. \end{proof}

If we expand $E_{\mathfrak a}(z, s)$ in the cusp $Z_{\mathfrak b}$,  the zero Fourier coefficient takes the form
$$\delta_{\mathfrak a\mathfrak b}y_{\mathfrak b}^s+\phi_{\mathfrak a \mathfrak b}(s)y_{\mathfrak b}^{1-s}.$$
It follows from Theorem \ref{eisensteinregularity}, that,  for $s\neq
1/2$,  $\phi_{\mathfrak a\mathfrak b}(s, \e)$ is also real analytic in $\e$, since
$$\phi_{\mathfrak a\mathfrak b}(s, \e)=\frac{1}{y^{1-s}_{\mathfrak b}}\left(
\int_0^1 E_{\mathfrak a}(z_{\mathfrak b}, s, \e)\,dx_{\mathfrak b}-\delta_{\mathfrak a\mathfrak b}y_{\mathfrak b}^s\right).$$

The singular set $\sigma$ includes the embedded eigenvalues and
resonances at the same time. The only points in the singular set $\sigma$ off the real axis
are \begin{enumerate} \item  $s_j$ with $s_j(1-s_j)$ an embedded
  eigenvalues counted with its multiplicity, 
and \item  resonances $s_j$ counted with multiplicity the order of the pole of the scattering determinant at $s_j$.\end{enumerate}
For the points in $[0,1]\cap\sigma$, see \cite{phillips-sarnak2}. A
point in the singular set is called singular.

The important theorem about the singular set needed is the following theorem:
\begin{theorem}{\cite[Corollary 5.2]{phillips-sarnak2}}\label{singularsetproperties}
If $s_j(0)$ is in the singular set $\sigma (0)$ for $\e=0$ and has multiplicity $1$, then it moves real analytically in $\e$ for $\abs{\e}$ sufficiently small. If the multiplicity is greater than one, then the singular points decompose into a finite system of real analytic functions having at most algebraic singularities.
\end{theorem}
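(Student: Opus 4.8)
The plan is to realize the entire singular set as the point spectrum of a single closed operator depending analytically on $\e$, and then to invoke the Kato--Rellich analytic perturbation theory. I would work in the Lax--Phillips framework for automorphic scattering \cite{lax-phillips}. Rewriting the wave equation attached to $L(\e)$ as a first-order system on the energy space of Cauchy data produces a unitary propagator $U_\e(t)$, fixed incoming and outgoing subspaces $D_\pm$, and the compression of $U_\e(t)$ to $K=(D_-\oplus D_+)^\perp$, namely the contraction semigroup $Z_\e(t)$ with generator $B(\e)$. The input I take from this theory is that $B(\e)$ has compact resolvent and that its point spectrum is precisely the singular set $\sigma(\e)$, counted with multiplicity: both the embedded eigenvalues of $L(\e)$ and the resonances occur as eigenvalues $\mu$ of $B(\e)$, related to $s$ by a correspondence $\mu\leftrightarrow s$ that is locally biholomorphic away from $s=1/2$.

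The first substantive step is to check that $B(\e)$ is a holomorphic family of operators. Here the decisive feature is that $g(\e)=g(0)$ on $M\setminus M_0$: the perturbation is supported in the compact part, so the cusp geometry, and with it the subspaces $D_\pm$ and the free evolution in the cusps, are independent of $\e$, and only the generator over $M_0$ is perturbed. Since $L(\e)$ is real analytic with $\e$-independent domain and the operators $L^{(i)}$ are compactly supported (Theorem \ref{eisensteinregularity}), the resolvent $(B(\e)-\mu)^{-1}$ is compact and jointly analytic in $(\mu,\e)$, and extends to small complex $\e$. Thus $B(\e)$ is an analytic family of type (A) in the sense of Kato \cite[Ch.~VII]{kato}.

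I would then localize at a singular point. Fix a singular value $s_j(0)$, let $\mu_0$ be the corresponding eigenvalue of $B(0)$, and let $\Gamma_0$ be a small circle enclosing $\mu_0$ and no other spectrum. For $\e$ small the Riesz projection
\begin{equation*}
P(\e)=\frac{1}{2\pi i}\oint_{\Gamma_0}(B(\e)-\mu)^{-1}\,d\mu
\end{equation*}
is analytic of constant rank $m=\dim\operatorname{ran}P(0)$, the multiplicity of $s_j(0)$. The eigenvalues of $B(\e)$ inside $\Gamma_0$ are the roots of the characteristic polynomial of the $m\times m$ matrix representing $B(\e)$ on $\operatorname{ran}P(\e)$, whose coefficients are analytic in $\e$; by the Puiseux/Weierstrass theory of algebraic functions these roots form a finite system of branches, analytic on a punctured disk and joined at most by algebraic branch points at $\e=0$. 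Transporting through $\mu\leftrightarrow s$ gives the stated decomposition. When $m=1$ the characteristic polynomial is linear, so there is a single holomorphic root $\mu(\e)$; restricting to real $\e$ and moving to the $s$-plane shows that $s_j(\e)$ is real analytic.

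I expect the second step to be the main obstacle, not the third. Because $Z_\e(t)$ is genuinely contractive, $B(\e)$ is non-self-adjoint, so the self-adjoint Rellich theorem that would rule out branch points is unavailable; the algebraic singularities in the higher-multiplicity case are therefore intrinsic. Establishing that $B(\e)$ is nonetheless a legitimate holomorphic family, despite the nonlocal passage to the energy space implicit in the wave-equation reformulation, is exactly where the compact support of the perturbation must be used: it is what keeps $D_\pm$ fixed and confines the $\e$-dependence to a relatively compact, analytically varying part of the generator.
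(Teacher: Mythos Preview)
Your sketch is correct and follows the same route the paper indicates: use the cut-off wave operator $B(\e)$ from Lax--Phillips scattering, establish real analyticity of its resolvent in $\e$, and then apply standard Kato--Rellich perturbation theory via Riesz projections. The paper does not give its own proof of this theorem; it cites \cite[Corollary~5.2]{phillips-sarnak2} (and M\"uller \cite{mueller} for admissible surfaces) and merely summarizes the strategy, which coincides with yours.

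One small point of emphasis: you single out the verification that $B(\e)$ is a holomorphic family as the main obstacle, and indeed the compact support of the perturbation (fixing $D_\pm$) is exactly what makes this go through. The paper, however, stresses that the \emph{technically difficult} ingredient in \cite{phillips-sarnak2} is the identification of the spectrum of $B(\e)$ with the singular set $\sigma(\e)$ (with multiplicities), which you take as input. This is a fair black-boxing for a sketch, but be aware that in the actual Phillips--Sarnak paper that identification is where most of the work lies; the subsequent perturbation-theoretic step is comparatively routine.
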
 
In the setting of compact perturbations of admissible surfaces, M\"uller \cite{mueller} proved the same statement.
These results use the family of cut-off wave operators $B(\e)$ and follow from standard perturbation theory, once it is proved that the resolvent $R_{B(\e)}(s)$ is real analytic for $\abs{\e}$ sufficiently small.
The technically difficult aspect of \cite{phillips-sarnak2} is the identification of the spectrum of $B(\e)$ with the singular set $\sigma (\e)$.

\section{Dissolving conditions of higher order} \label{proofs}

\subsection{Main statements}
 From this section onwards we restrict ourselves, for simplicity, to
the case that $M$ has one cusp, i.e. $\mathfrak{k}=1$.
 Let $s_j=s_j(0)$ be a singular point of multiplicity $m$.
Let $\hat{s}_j(\e) $ be the weighted mean of the branches of the singular points generated by splitting $s_j(0)$ under perturbation, i.e.
$$\hat{s}_j (\e)=\frac{1}{m}\sum_{l=1 }^m s_{j,l}(\e).$$
We are now ready to state and prove the more precise version of
Theorem \ref{higher-fermi-intro}: 

 \begin{theorem}\label{maintheorem}
 Assume that for $k=0, 1, \ldots , n-1$ the functions $D^{(k)}(z, s)$
 are regular at  a cuspidal eigenvalue $s_j=1/2+ir_j$.
 Then $D^{(n)}(z, s)$ has at most a first order pole at $s_j$. 
 \begin{enumerate}
\item  If $D^{(n)}(z, s)$ has a pole at $s_j$, then the embedded eigenvalue becomes a resonance.
\item  Moreover, with $\norm{\cdot}$ the standard $L^2$-norm,
 \begin{equation}
 \Re {\hat{s}}_j^{(2n)}(0)=-\frac{1}{2m}\binom{2n}{n}\norm{     \res_{s=s_j}D^{(n)}(z, s)                }^2.\end{equation}
\end{enumerate} 
\end{theorem}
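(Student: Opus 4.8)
The plan is to express everything through the cut-off wave operator $B(\e)$ from Theorem~\ref{singularsetproperties}, whose eigenvalues near $s_j(1-s_j)$ are exactly the branches $s_{j,l}(\e)(1-s_{j,l}(\e))$, and to apply the Rellich--Kato perturbation formulas for the weighted mean of a splitting eigenvalue. First I would set up the spectral side: let $z_j(\e)$ be the weighted mean of the eigenvalues of $B(\e)$ splitting off from $\mu_j=s_j(1-s_j)$, so $z_j(\e)$ is \emph{holomorphic} (no algebraic singularities survive after averaging, by \cite[p.~79]{kato}), and translate derivatives of $z_j$ into derivatives of $\hat s_j$ via $z_j=\hat s_j(1-\hat s_j)$ together with the regularity hypotheses, which force $\hat s_j^{(k)}(0)=0$ for $0<k<2n$ in the direction transverse to the line $\Re s=1/2$ (this is the content of the last remark before Theorem~\ref{maintheorem}). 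The upshot is that $\Re\hat s_j^{(2n)}(0)$ is, up to the elementary factor $-1/(2r_j)\cdot(\text{something})$ coming from differentiating $z=s(1-s)$ at $s_j$, the real part of a suitable derivative of $z_j(\e)$.

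Next I would compute that derivative of $z_j(\e)$ by Kato's reduction-to-finite-dimensions: $z_j(\e)-\mu_j=\frac1m\operatorname{tr}\big(P(\e)(B(\e)-\mu_j)P(\e)\big)$ where $P(\e)$ is the total eigenprojection onto the splitting cluster, and then expand in $\e$. Under the hypothesis that $D^{(0)},\dots,D^{(n-1)}$ are regular at $s_j$, the perturbed eigenfunction of $B(\e)$ attached to $u_j$ has an expansion whose first $n-1$ corrections stay in $L^2$ and whose $n$-th correction is governed precisely by $\operatorname{res}_{s=s_j}D^{(n)}(z,s)$; this is where the formula $\operatorname{Res}_{s=s_j}D^{(1)}(z,s)=\langle\Delta^{(1)}u_j,E(z,1/2+ir_j)\rangle u_j$ from the excerpt generalizes. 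Concretely, I expect to show via the resolvent identity (\ref{Dnresolvent}) and the limiting-absorption/polar structure (\ref{limit-absorption})--(\ref{resolventpolarstructure}) that the $n$-th order perturbation of the $B$-eigenfunction has ``non-$L^2$ part'' whose size is $\norm{\operatorname{res}_{s=s_j}D^{(n)}}$, and that the real part of the $2n$-th derivative of $z_j$ picks up exactly $-\binom{2n}{n}$ times the square of this norm (the binomial coefficient is the combinatorial count of how a degree-$2n$ term in $\operatorname{tr}(P(B-\mu)P)$ pairs an $n$-th order correction with another $n$-th order correction, as in the Feshbach/Schur-complement expansion; all lower pairings vanish by the regularity assumption). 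Dividing by $m$ gives the stated $-\frac{1}{2m}\binom{2n}{n}\norm{\cdot}^2$; part~(1) is then immediate since a nonzero residue makes $\Re\hat s_j^{(2n)}(0)<0$, pushing at least one branch off the unitary axis into $\Re s<1/2$, i.e. into a resonance.

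For part~(1) alone there is also the softer route promised in the excerpt's heuristics section: a pole of $D^{(n)}(z,s)$ at $s_j$ would, if the eigenvalue stayed embedded to all orders, contradict the analyticity of the (renormalized) Eisenstein series family $D(z,s,\e)$ in $\e$ together with the fact that an embedded eigenvalue is signalled by a genuine $L^2$-eigenfunction rather than a pole of a generalized eigenfunction; I would run this as a clean standalone argument first, then the quantitative statement~(2) as above.

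The main obstacle I anticipate is the bookkeeping that identifies the $n$-th order perturbation coefficient of the $B(\e)$-eigenfunction with $\operatorname{res}_{s=s_j}D^{(n)}(z,s)$ in the correct normalization. Two subtleties need care: (i) the cut-off wave operator $B$ acts on a space built from the automorphic functions modulo the incoming/outgoing subspaces, so one must track how the residue of the generalized eigenfunction $D^{(n)}$ — an honest $L^2$ object by Theorem~\ref{maintheorem} once we know it's at most a simple pole — maps into that quotient and contributes to the trace; (ii) the factor $\binom{2n}{n}$ must be extracted cleanly, which means organizing the expansion of $\operatorname{tr}\big(P(\e)(B(\e)-\mu_j)P(\e)\big)$ so that the regularity hypothesis kills every cross term except the ``diagonal'' $(n,n)$ one. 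I would handle (i) by reusing the computation of $\operatorname{Res}_{s=s_j}D^{(1)}$ from \cite{petridis, osullivan2} essentially verbatim at each order, and (ii) by a generating-function argument on the Kato--Rellich expansion rather than term-by-term matching.
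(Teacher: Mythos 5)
Your overall strategy --- average the splitting branches, show the averaged quantity is analytic, prove the lower-order real parts vanish, and compute the first surviving term --- matches the paper's outline, but the route you take to the quantitative statement is different and, as written, has a genuine gap at its centre. The paper does \emph{not} run Kato--Rellich perturbation theory on the cut-off wave operator $B(\e)$; it only uses $B(\e)$ through Theorem \ref{singularsetproperties} (analyticity of the branches), and then tracks $\Re\hat s_j(\e)$ by the argument principle applied to the scattering function: $2m\Re(\hat s_j(\e)-s_j)=-\frac{1}{2\pi i}\int_{\partial B(s_j,u)}(s-s_j)\frac{\phi'(s,\e)}{\phi(s,\e)}\,ds$, after using the functional equation $\phi(s,\e)\phi(1-s,\e)=1$ to cancel the cuspidal branches and complete the half-contour. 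Differentiating $2n$ times in $\e$ reduces everything to the polar structure of the Taylor coefficients $\phi^{(l)}(s)$, which is Theorem \ref{orderpolephi(s)}. Remark \ref{phillips-sarnakcondition} makes explicit that avoiding the wave-operator/energy-form computation is a point of the paper.

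The gap in your plan is the step you yourself flag as ``the main obstacle'': the claim that in the order-$2n$ expansion of $\operatorname{tr}\bigl(P(\e)(B(\e)-\mu_j)P(\e)\bigr)$ the regularity of $D^{(0)},\dots,D^{(n-1)}$ kills every cross term except the diagonal $(n,n)$ pairing, which then contributes exactly $-\binom{2n}{n}\norm{\res_{s=s_j}D^{(n)}}^2$ to the real part. This is asserted, not proved, and it is precisely where the work lies: the naive order-$2n$ Rellich expansion involves compositions of the $L^{(i)}$ with reduced resolvents of $B$ in combinations that are not visibly controlled by hypotheses on the $D^{(k)}$, and the eigenvectors of $B(\e)$ live in the cut-off space, so the identification of their $n$-th order corrections with $\res_{s=s_j}D^{(n)}(z,s)$ (your subtlety (i)) is exactly the missing link, known only for $n=1$ from \cite{petr,phillips-sarnak2}. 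The paper's substitute for this bookkeeping is Proposition \ref{phireduction-result}: an iterated integration-by-parts identity that rewrites $(2s-1)\phi^{(2n)}(s)$ so that it involves only $D^{(j)}$ with $j\le n$; taking $i=n$ there makes the hypothesis bite, produces the single surviving term with the factor $\binom{2n}{n}$, and combined with the spectral-projection formula (\ref{Dn-residue}) and the reality relation (\ref{get-to-conjugate}) (which you would also need, to see that the surviving term is a \emph{negative real} multiple of $\phi(s_j)$) yields the residue of $\phi^{(2n)}$. Without an analogue of that reduction on the $B(\e)$ side, your ``generating-function argument'' is a hope rather than a proof. Your softer argument for part (1) via $\inprod{u_j(\e)}{D(z,s,\e)}=0$ is sound for a simple eigenvalue and is indeed the paper's Section \ref{heustics-dissolving}, but note the paper derives part (1) in general from part (2), since $\Re\hat s_j^{(2n)}(0)<0$ forces at least one branch off the line.
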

 \begin{corollary}\label{cykel}
 At least one of the  cusp forms with given  $s_j$ becomes a resonance if and only if for some $m\in \N$ the function $D^{(m)}(z, s)$ has a pole at $s_j$.
 \end{corollary}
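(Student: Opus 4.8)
The plan is to deduce Corollary~\ref{cykel} from Theorem~\ref{maintheorem} together with the description, recalled in Section~\ref{introductorymaterial}, of the singular set $\sigma$ off the real axis: its points are either embedded eigenvalues (on $\Re(s)=1/2$) or resonances, i.e.\ poles of the scattering determinant, which off the real axis lie in $\Re(s)<1/2$. Throughout, recall that $s_j=1/2+ir_j$ with $r_j\ne0$, so $s_j$ lies on the critical line away from $s=1/2$; in particular $D^{(0)}(z,s)=E(z,s)$ is regular at $s_j$, and for $\abs\e$ small each branch $s_{j,l}(\e)$ stays off the real axis, so $\Re s_{j,l}(\e)\le 1/2$, whence the real-analytic mean $g(\e):=\Re\hat s_j(\e)=\tfrac1m\sum_l\Re s_{j,l}(\e)$ satisfies $g(\e)\le\tfrac12=g(0)$.

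For the ``if'' direction, assume $D^{(m)}(z,s)$ has a pole at $s_j$ for some $m\in\N$ and let $n\ge1$ be the smallest such index. Then $D^{(k)}(z,s)$ is regular at $s_j$ for $k=0,\ldots,n-1$, so the hypotheses of Theorem~\ref{maintheorem} hold and part~(1) gives at once that at least one cusp form with parameter $s_j$ becomes a resonance. One can also be quantitative: Theorem~\ref{maintheorem}(2) applied with each index $n'<n$ gives $g^{(2n')}(0)=0$, while for $n'=n$ it gives $g^{(2n)}(0)=-\tfrac1{2m}\binom{2n}{n}\norm{\res_{s=s_j}D^{(n)}(z,s)}^2<0$ since the residue is nonzero; because $g$ has a maximum at $0$ the intervening odd-order derivatives also vanish, so $g(\e)<\tfrac12$ for $0<\abs\e$ small and some branch $s_{j,l}(\e)$ is a resonance.

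For the ``only if'' direction, assume $D^{(m)}(z,s)$ is regular at $s_j$ for \emph{every} $m\in\N$. Then the hypotheses of Theorem~\ref{maintheorem} hold for every $n\ge1$ with $\res_{s=s_j}D^{(n)}(z,s)=0$, so part~(2) yields $g^{(2n)}(0)=0$ for all $n\ge1$: every even-order derivative of $g$ vanishes at $0$. Since $\e=0$ is a maximum of the real-analytic function $g$, if $k\ge1$ were the smallest index with $g^{(k)}(0)\ne0$ then $k$ would have to be even with $g^{(k)}(0)<0$, which is impossible; hence $g^{(k)}(0)=0$ for all $k$, and $g\equiv\tfrac12$ by real-analyticity. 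As each $\Re s_{j,l}(\e)\le\tfrac12$ while their mean is identically $\tfrac12$, every branch satisfies $\Re s_{j,l}(\e)=\tfrac12$ for small $\e$, so no cusp form with parameter $s_j$ becomes a resonance.

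The only input beyond Theorem~\ref{maintheorem} is the localization of resonances in $\Re(s)\le\tfrac12$, which is already available, so the argument is essentially formal. The step requiring the most care is the ``only if'' part, where one must combine the fact that a maximum of a real-analytic function has even order with the vanishing of \emph{all} even-order derivatives supplied by Theorem~\ref{maintheorem}(2) (and the real-analyticity of $\hat s_j$, which holds since $\hat s_j$ is, up to the factor $1/m$, an elementary symmetric function of the branches) to conclude that $\Re\hat s_j$ never leaves the critical line.
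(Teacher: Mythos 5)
Your proof is correct and follows the same route as the paper: both directions are extracted from Theorem~\ref{maintheorem} together with the localization of the singular set off the real axis in $\Re(s)\le 1/2$, which makes $\Re\hat s_j(\e)$ a real-analytic function with a maximum at $\e=0$; the ``if'' half coincides with the paper's argument (minimal $n$ with a pole, vanishing of the lower even derivatives, impossibility of an odd leading term). For the ``only if'' half your write-up is in fact more careful than the paper's one-sentence treatment, which as printed derives the vanishing of $\Re\hat s_j^{(2n)}(0)$ \emph{from} the assumption that all eigenvalues remain cuspidal --- i.e.\ the converse of the implication actually needed --- whereas you run it in the correct direction: regularity of every $D^{(m)}$ forces all even-order derivatives to vanish via Theorem~\ref{maintheorem}(2), the maximum at $\e=0$ rules out a nonzero odd-order leading term, real-analyticity of the weighted mean then gives $\Re\hat s_j\equiv 1/2$, and the branchwise inequality $\Re s_{j,l}(\e)\le 1/2$ upgrades this to every branch remaining on the critical line. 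This is exactly the chain the paper leaves implicit, so your version can stand as a complete proof.
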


\subsection{Poles of $D^{(n)}(z,s)$, and dissolving cusp forms}\label{heustics-dissolving} Before we prove Theorem \ref{maintheorem}, we indicate an argument that explains why a singularity of $D^{(n)}(z, s)$ at $s_j$ is connected to 
dissolving cusp forms. For simplicity we consider hyperbolic surfaces
so that the generalized eigenfunction $D(z,s,\e)$ is an Eisenstein series. Let us assume that $D^{(k)}(z, s)$ is regular
at $s_j$ for $k=1, \ldots, n-1$. Assume $u_j$ is a simple cusp
form and that  $u_j(\e)$ remains a cusp form with $u_j(0)=u_j$. It is
known that cusp forms are perpendicular to the Eisenstein series $D(z,s,\e)$ for all $s$. This gives
\begin{equation}\label{easyeq}\inprod{u_j(\e)}{D(z, s, \e)}=0.\end{equation}
Phillips and Sarnak \cite{phillips-sarnak2} proved the real
analyticity of $u_j(\e)$. We differentiate (\ref{easyeq}) to get
$$\sum_{k=0}^{n}\binom{n}{k}\inprod{u_j^{(n-k)}}{D^{(k)}(z, s)}=0$$
for $s$ close to $s_j$. By the assumptions the term with $k=n$ should be a
regular function at $s_j$. Under the same assumptions, using
(\ref{Dnresolvent})  and (\ref{resolventpolarstructure}) we see that
$D^{(n)}(z, s)$ has at most a first order pole at $s_j$ with residue a multiple of $u_j(0)$. By regularity of $\inprod{u_j}{D^{(n)}(z, s)}$ this residue has to vanish.
  This approach does not prove Corollary \ref{cykel}
  but shows the sufficiency of the condition that some $D^{(n)}(z,s)$ has a pole to
  conclude that $s_j$ becomes a resonance. Corollary \ref{cykel} shows
  that this is also necessary.

\subsection{Polar structure of the Taylor coefficients of $\phi(s, \e)$}

Since the singular set is defined partly though the poles of
$\phi(s)$, we  now investigate the perturbation series of 
$\phi(s,\epsilon)$, in order to track the singular points as $\e$ varies.

The functional equation for $D(z, s, \e)$ is
\begin{equation}\label{eq:FE}D(z, s, \e)=\phi(s, \e)D(z, 1-s, \e).\end{equation} Since $D(z, s, \e)$ is real
analytic in $\e$ the same is true for $\phi(s,\epsilon)$ and we may introduce 
 the perturbation series of the scattering matrix $\phi(s, \e):$ 
 $$ \phi (s, \e)=\phi (s,
 0)+\phi^{(1)}(s)\e+\frac{\phi^{(2)}(s)}{2!}\e^2+\cdots . $$
We differentiate (\ref{eq:FE}) to identify the perturbation coefficients of $\phi (s, \e)$:
\begin{equation}\label{dnfunctional}D^{(n)}(z, s)=\sum_{i=0}^n\binom{n}{i} \phi^{(i)}(s)D^{(n-i)}(z, 1-s).\end{equation}

\begin{proposition}\label{scatteringphin}
The perturbation coefficients of the scattering matrix are given by
$$\phi^{(n)}(s)=\frac{1}{2s-1}\int_ME(z, s)\sum_{i=1}^{n}\binom{n}{i}L^{(i)}D^{(n-i)}(z, s)\, d\mu (z), \quad n\ge 1.$$
\end{proposition}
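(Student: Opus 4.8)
The plan is to start from the constant Fourier coefficient of the Eisenstein series in the cusp and combine it with the resolvent formula for $D^{(n)}(z,s)$ from Theorem \ref{eisensteinregularity}. Recall that the zero Fourier coefficient of $E(z,s)=D(z,s,0)$ in the (single) cusp is $y^s+\phi(s)y^{1-s}$, and that for the perturbed family the corresponding coefficient is $y^s+\phi(s,\e)y^{1-s}$, where the $y^s$ term is unchanged because the perturbation is compactly supported and so the defining behaviour in the cusp is the same for all $\e$. Differentiating $n$ times in $\e$ and setting $\e=0$ kills the $y^s$ term and shows that $\phi^{(n)}(s)y^{1-s}$ is exactly the constant Fourier coefficient of $D^{(n)}(z,s)$ in the cusp, for $n\ge 1$. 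So the task reduces to extracting the constant term of $D^{(n)}(z,s)$.

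For that I would use the identity $D^{(n)}(z,s)=-R(s)\sum_{i=1}^n\binom{n}{i}L^{(i)}D^{(n-i)}(z,s)$ from (\ref{Dnresolvent}), valid on $\Re(s)>1/2$ away from the spectrum, and then read off the constant term of $R(s)g$ for a compactly supported $g$. The standard way to do this is via the Maass–Selberg type relation, or more directly: pairing $E(z,\bar s)$ against the eigenfunction equation. Concretely, since $g:=\sum_{i=1}^n\binom{n}{i}L^{(i)}D^{(n-i)}(z,s)$ is compactly supported, one has $(\Delta+s(1-s))R(s)g=g$, and integrating $R(s)g$ against $E(z,\bar s)$ using Green's identity over a truncated surface $M_Y$ produces, after letting $Y\to\infty$, only the boundary contribution from the constant term of $R(s)g$ against the constant term $y^s+\phi(s)y^{1-s}$ of the Eisenstein series; the cross terms in $y$ integrate to give a factor involving $1/(2s-1)$ (this is the usual unfolding computation behind the limiting absorption principle (\ref{limit-absorption})). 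This yields
\begin{equation*}
\langle g, E(\cdot,\bar s)\rangle = (2s-1)\cdot(\text{constant term of }R(s)g)\cdot(\text{something})^{-1},
\end{equation*}
so that the constant Fourier coefficient of $-R(s)g$ equals $\frac{1}{2s-1}\int_M E(z,s)\,g(z)\,d\mu(z)$ times $y^{1-s}$. Comparing with the previous paragraph gives precisely $\phi^{(n)}(s)=\frac{1}{2s-1}\int_M E(z,s)\sum_{i=1}^n\binom{n}{i}L^{(i)}D^{(n-i)}(z,s)\,d\mu(z)$, first on $\Re(s)>1$ where everything converges absolutely, and then on all of $\C$ by meromorphic continuation, since the right-hand side inherits meromorphic continuation from that of $D^{(n)}$ and $E$.

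The main obstacle I anticipate is making the unfolding/Green's identity step rigorous, i.e. justifying the truncation at height $Y$, controlling the non-constant Fourier modes (which decay and contribute nothing in the limit), and verifying that the only surviving boundary term produces exactly the factor $1/(2s-1)$ with the correct normalization — this is where sign conventions and the precise form of the Eisenstein series' constant term must be tracked carefully. An alternative, perhaps cleaner, route is to avoid Green's identity entirely: apply the functional equation $D^{(n)}(z,s)=\sum_{i=0}^n\binom{n}{i}\phi^{(i)}(s)D^{(n-i)}(z,1-s)$ from (\ref{dnfunctional}) together with (\ref{Dnresolvent}) and the limiting absorption principle (\ref{limit-absorption}) for $R(s)-R(1-s)$ applied to the compactly supported function $g$; isolating $\phi^{(n)}(s)$ (the top term, since all lower $\phi^{(i)}$ and $D^{(n-i)}$ are already known inductively) directly produces the stated integral. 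I would present the argument by induction on $n$, using this second route as the backbone.
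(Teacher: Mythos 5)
Your chosen backbone (the second route) is exactly the paper's argument: an induction in which one writes $D^{(n)}(z,s)=-R(s)g$ with $g=\sum_{i=1}^n\binom{n}{i}L^{(i)}D^{(n-i)}(z,s)$ compactly supported, swaps $R(s)$ for $R(1-s)$ via the limiting absorption principle (\ref{limit-absorption}) to produce the $\frac{1}{2s-1}\langle g,E\rangle E(z,1-s)$ term, and then matches against the differentiated functional equation (\ref{dnfunctional}) to isolate $\phi^{(n)}(s)$. The first route you sketch (reading $\phi^{(n)}(s)$ off the constant Fourier coefficient of $D^{(n)}$ via a Green's identity computation) is a workable alternative, but since you explicitly adopt the second route, the proposal is correct and essentially coincides with the paper's proof.
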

\begin{proof}
The proof is already in \cite{risagerthesis}. We include the argument here.
We proceed by induction.   By using first (\ref{Dnresolvent})  and
then (\ref{limit-absorption})  we find that  
\begin{align*}D^{(1)}&(z, s)=-R(s)L^{(1)}E(z, s)\\
&=-R(1-s)L^{(1)}E(z, s)+\frac{1}{2s-1}\int_M E(z', s)L^{(1)}E(z', s)\,
d\mu (z') E(z, 1-s)\\
&=\phi(s)(-R(1-s)L^{(1)}E(z, 1-s))+\frac{1}{2s-1}\int_M E(z', s)L^{(1)}E(z', s)\,
d\mu (z') E(z, 1-s)\\
&=\phi(s)D^{(1)}(z,1-s)+\frac{1}{2s-1}\int_M E(z', s)L^{(1)}E(z', s)\,
d\mu (z')E(z, 1-s).
\end{align*}
From  (\ref{dnfunctional}) we know that 
$$D^{(1)}(z, s)=\phi^{(1)}(s)E(z, 1-s)+\phi(s) D^{(1)}(z, 1-s),$$
and since $E(z,1-s)$ does not vanish identically, we get the result for $n=1$.

Assume the formula has been proved for $m<n$. Using (\ref{Dnresolvent}) and (\ref{limit-absorption}) we get
\begin{eqnarray*}
{D^{(n)}(z, s)}&=&-R(1-s)\sum_{i=1}^n\binom{ n}{ i} L^{(i)} D^{(n-i)}(z, s)\\&&+\frac{1}{2s-1}\left(\int_ME(z, s)\sum_{i=1}^n\binom{ n}{ i} L^{(i)} D^{(n-i)}(z, s)\, d\mu (z) \right)E(z, 1-s)\\
&=&-R(1-s)\sum_{i=1}^n\binom{n}{ i} L^{(i)}\sum_{k=0}^{n-i}\binom{n-i}{ k}\phi^{(k)}(s)D^{(n-i-k)}(z, 1-s)+Q(z, s)\\
&=&-R(1-s)\sum_{k=0}^{n-1}\phi^{(k)}(s)\sum_{i=1}^{n-k}\binom{n}{ k}\binom{ n-k}{ i}L^{(i)}D^{(n-i-k)}(z, 1-s)+Q(z, s)\\
&=&\sum_{k=0}^{n-1}\binom{n}{ k}\phi^{(k)}(s)\left(-R(1-s)\sum_{i=1}^{n-k}\binom{ n-k }{ i}L^{(i)}D^{(n-k-i)}(z, 1-s)\right)+Q(z, s)\\
&=&\sum_{k=0}^{n-1}\binom{ n }{ k}\phi^{(k)}(s)D^{(n-k)}(z, 1-s)+Q(z, s),
\end{eqnarray*}
where 
$$Q(z, s)= \frac{1}{2s-1}\int_ME(z', s)\sum_{i=1}^{n}\binom{n}{i}L^{(i)}D^{(n-i)}(z', s)\, d\mu (z') E(z, 1-s).$$
Comparing with (\ref{dnfunctional}), we get that
$$\phi^{(n)}(s)=\frac{1}{2s-1}\int_ME(z, s)\sum_{i=1}^{n}\binom{n}{i}L^{(i)}D^{(n-i)}(z, s)\, d\mu (z),$$
which finishes the proof.
 \end{proof}
Proposition \ref{scatteringphin} allows to recover all the scattering
terms in terms of the perturbed Eisenstein series. However, for
$\phi^{(n)}(s)$ one uses information for $D^{(j)}(z, s)$ with $j$ up
to $n$. For our purposes this is \emph{not} good enough. The following
technical yet important proposition allows to use fewer $D^{(j)}(z, s)$.
\begin{proposition}\label{phireduction-result} 
The perturbed terms of the scattering function $\phi^{(n)}(s)$ are given for $i=1, 2, \ldots , n-1$ by
\begin{align}\label{phireduction}
\nonumber (2s-1)\phi^{(n)}(s)&=\binom{n}{i}\inprod{
  \sum_{k=1}^{n-i}\binom{n-i}{k}L^{(k)}D^{(n-i-k)}(z, s)}{D^{(i)}(z,
  \bar s)}\\ & \qquad +\sum_{k=i+1}^n\binom{n}{k}\inprod{D^{(n-k)}(z, s)}{ \sum_{m=0}^{i-1}\binom{k}{m}L^{(k-m)}D^{(m)}(z, \bar s)}.\end{align}
\end{proposition}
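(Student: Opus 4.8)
The plan is to prove the identity \eqref{phireduction} by an induction-free manipulation that exploits the self-adjointness of the resolvent together with the already-established Proposition~\ref{scatteringphin}. Recall that Proposition~\ref{scatteringphin} gives
\[
(2s-1)\phi^{(n)}(s)=\int_M E(z,s)\sum_{k=1}^{n}\binom{n}{k}L^{(k)}D^{(n-k)}(z,s)\,d\mu(z)
=\Bigl\langle \sum_{k=1}^{n}\binom{n}{k}L^{(k)}D^{(n-k)}(z,s)\,,\,E(z,\bar s)\Bigr\rangle,
\]
using $\overline{E(z,\bar s)}=E(z,s)$ off the unitary axis. So the content of Proposition~\ref{phireduction-result} is that one may "trade" the $k=n-i+1,\dots,n$ terms of this sum, which involve $D^{(j)}$ for $j<i$, against terms in which the Eisenstein factor $E(z,\bar s)$ is replaced by $D^{(i)}(z,\bar s)$ and only $D^{(j)}$ with $j\le n-i-1$ appear under $L^{(k)}$ in the first bracket. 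The mechanism for this trade is the recursion \eqref{Dnresolvent}: $D^{(i)}(z,\bar s)=-R(\bar s)\sum_{m=1}^{i}\binom{i}{m}L^{(m)}D^{(i-m)}(z,\bar s)$, combined with the fact that $R(s)$ is self-adjoint in the appropriate sense, $\langle R(s)f,g\rangle=\langle f,R(\bar s)g\rangle$ on compactly supported $f,g$ (the operators $L^{(k)}$ being compactly supported, everything in sight lies in the domain where meromorphic continuation and this adjoint relation are legitimate, by Theorem~\ref{eisensteinregularity}).

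Concretely, I would start from Proposition~\ref{scatteringphin} and split the sum $\sum_{k=1}^n = \sum_{k=1}^{n-i} + \sum_{k=n-i+1}^{n}$; reindexing the second sum by $k\mapsto n-k$ rewrites it as $\sum_{k=i+1}^{n}\binom{n}{k}L^{(n-k)}D^{(k)}$, wait — more carefully, I want the $D$-factor to carry the high derivative; let me instead keep the sum indexed so that the term $\binom{n}{k}L^{(k)}D^{(n-k)}$ with $n-k\ge i$ stays as is and the terms with $n-k<i$, i.e. $k>n-i$, get processed. For each such term $\binom{n}{k}L^{(k)}D^{(n-k)}(z,s)$ with $n-k\le i-1$, I use the functional-equation-free route: pair it against $E(z,\bar s)$, then feed the definition of $D^{(i)}$ in the \emph{other} slot by running \eqref{Dnresolvent} backwards. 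The cleanest bookkeeping is the one the authors have clearly set up: fix $i\in\{1,\dots,n-1\}$, and show
\[
\Bigl\langle \sum_{k=n-i+1}^{n}\binom{n}{k}L^{(k)}D^{(n-k)}(z,s)\,,\,E(z,\bar s)\Bigr\rangle
= \binom{n}{i}\Bigl\langle \sum_{k=1}^{n-i}\binom{n-i}{k}L^{(k)}D^{(n-i-k)}(z,s)\,,\,D^{(i)}(z,\bar s)\Bigr\rangle
+ (\text{correction}),
\]
and then check that the correction, after moving $R(\bar s)$ across by self-adjointness and re-expanding $D^{(i)}(z,\bar s)$ layer by layer, telescopes into exactly $\sum_{k=i+1}^{n}\binom{n}{k}\langle D^{(n-k)}(z,s), \sum_{m=0}^{i-1}\binom{k}{m}L^{(k-m)}D^{(m)}(z,\bar s)\rangle$. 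The binomial identity $\binom{n}{k}\binom{n-k}{i}=\binom{n}{i}\binom{n-i}{k}$ (Vandermonde-type, the same one used in the proof of Proposition~\ref{scatteringphin}) is what converts the $\binom{n}{k}$ weights into the $\binom{n}{i}\binom{n-i}{k}$ weights of the first bracket, and an analogous split $\binom{n}{k}=\binom{n}{k}$ with the inner $\binom{k}{m}$ handling the partial expansion of $D^{(i)}$ produces the second sum.

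The main obstacle, and the place where care is needed, is the second sum: the upper index $m$ runs only to $i-1$, not to $i$, which means the expansion of $D^{(i)}(z,\bar s)$ via \eqref{Dnresolvent} must be peeled off exactly once — one uses $D^{(i)}=-R(\bar s)\sum_{m=1}^i\binom{i}{m}L^{(m)}D^{(i-m)}$, moves the $R(\bar s)$ back onto the other factor (turning it into $R(s)$ and reconstituting a $D^{(n-i)}$-type object via \eqref{Dnresolvent} again), and the "remainder" of this single peeling is precisely the $m$-sum up to $i-1$. Getting the index ranges and the combinatorial coefficients to match on the nose — in particular verifying that no $m=i$ term survives and that the shift $k\mapsto k-m$ inside $L^{(k-m)}$ lands correctly — is the delicate computation; everything else is formal. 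I would organize it as: (i) state the adjoint property of $R(s)$ and the binomial lemma; (ii) apply Proposition~\ref{scatteringphin} and split the sum at $k=n-i$; (iii) on the high-$k$ piece, insert $E(z,\bar s)=$ (the defining identity for $D^{(0)}$ is trivial, so instead) recognize $E(z,\bar s)$ as the $0$-th term and build up $D^{(i)}(z,\bar s)$ by applying \eqref{Dnresolvent} $i$ times, each application accompanied by moving a resolvent across; (iv) collect the "main" term into the first bracket using the binomial lemma and the "defect" terms into the second bracket; (v) conclude by comparing with \eqref{dnfunctional}. Since the identity is claimed to hold for every $i=1,\dots,n-1$, a sanity check at $i=1$ (where the first bracket is $\binom{n}{1}\langle\sum_{k=1}^{n-1}\binom{n-1}{k}L^{(k)}D^{(n-1-k)},D^{(1)}(z,\bar s)\rangle$ and the second collapses to the $m=0$ terms) against the direct computation in the $n=1$ case of Proposition~\ref{scatteringphin} is a worthwhile first step to fix the conventions before writing the general argument.
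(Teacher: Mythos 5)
Your toolkit is the right one --- Proposition \ref{scatteringphin} as the starting point, self-adjointness of $R(s)$ and of the $L^{(k)}$, the recursion (\ref{Dnresolvent}), and the trinomial identity $\binom{n}{i}\binom{n-i}{k-i}=\binom{n}{k}\binom{k}{i}$ --- and your step (iii), ``build up $D^{(i)}(z,\bar s)$ by applying (\ref{Dnresolvent}) $i$ times, each application accompanied by moving a resolvent across,'' is exactly the mechanism of the paper's proof, which organizes those $i$ peelings as a formal induction on $i$. But the concrete splitting in your step (ii) is backwards, and following it as written would fail. You propose to leave the terms $\binom{n}{k}\inprod{L^{(k)}D^{(n-k)}(z,s)}{E(z,\bar s)}$ with $n-k\ge i$ ``as is'' and to process only those with $k>n-i$. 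The untouched terms then still contain $D^{(n-1)},\dots,D^{(i)}$ paired against $E$, and these do not appear in the target identity (\ref{phireduction}): its left slots involve only $D^{(j)}$ with $j\le n-i-1$ and its right slots only $D^{(j)}$ with $j\le i$ (already for $i=1$ your scheme would process only the single term $k=n$, which cannot generate the first bracket). The only way to lower the order of $D^{(n-1)}$ is to expand it by (\ref{Dnresolvent}), so it is precisely the \emph{low}-$k$ (high-derivative) terms that must be processed: in the base case one writes $\binom{n}{1}\inprod{L^{(1)}D^{(n-1)}}{E}=\binom{n}{1}\inprod{D^{(n-1)}}{L^{(1)}E}$, expands $D^{(n-1)}=-R\sum_k\binom{n-1}{k}L^{(k)}D^{(n-1-k)}$, and moves the resolvent onto $L^{(1)}E$ to create $D^{(1)}$ in the right slot, while the terms $k\ge 2$ merely have $L^{(k)}$ moved across and form the $m=0$ part of the second sum.

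Beyond that misidentification, the actual content of the proposition --- verifying that the defect terms produced by the successive peelings assemble into $\sum_{k=i+1}^{n}\binom{n}{k}\inprod{D^{(n-k)}}{\sum_{m=0}^{i-1}\binom{k}{m}L^{(k-m)}D^{(m)}}$ with exactly those index ranges --- is what you explicitly defer as ``the delicate computation,'' so the proposal is a roadmap rather than a proof. The paper's induction on $i$ is the device that makes this verification finite: at each step one peels exactly the $k=1$ term from the first bracket and the $k=i+1$ term from the second sum, combines them using $\binom{n}{i}\binom{n-i}{1}=(i+1)\binom{n}{i+1}$, and applies (\ref{Dnresolvent}) twice, so no $i$-fold unrolled telescoping ever has to be confronted at once. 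I recommend recasting your plan in that inductive form and checking the $i\to i+1$ step rather than attempting the direct expansion.
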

\begin{proof}
To simplify the notation we suppress $z$ and $s$ and $\bar s$ in the inner products. It will be understood that the terms on the left of $\langle \cdot, \cdot \rangle $ should carry $s$ and the one on the right should have $\bar s$. Note that for any functions $f, g$ we have $\inprod{R(s) f}{g}=\inprod{f}{ R(\bar s) g}$, since $R(s)^*=R(\bar s)$.
Moreover, since $L(\e)$ are self-adjoint, the same applies to $L^{(j)}$.
Even if the Eisenstein series are not in $L^2$, since $L^{(j)}$ are compactly supported, we can easily justify the integration by parts in the following calculation. 
We have from Proposition \ref{scatteringphin} and (\ref{Dnresolvent})
\begin{align*}(2s-1)&\phi^{(n)}(s)=\inprod{\sum_{i=1}^{n}\binom{n}{i}L^{(i)}D^{(n-i)}}{E}\\&=\binom{n}{1}\inprod{ D^{(n-1)}}{L^{(1)}E}+\sum_{i=2}^n\binom{n}{i}\inprod{D^{(n-i)}}{ L^{(i)}E}\\
\allowdisplaybreaks &=\binom{n}{1}\inprod{\sum_{k=1}^{n-1}\binom{n-1}{k}(-R)L^{(k)} D^{(n-1-k)}}{L^{(1)}E}+\sum_{i=2}^n\binom{n}{i}\inprod{D^{(n-i)}}{ L^{(i)}E}
\\
\allowdisplaybreaks &=\binom{n}{1}\inprod{\sum_{k=1}^{n-1}\binom{n-1}{k}L^{(k)} D^{(n-1-k)}}{-RL^{(1)}E}+\sum_{i=2}^n\binom{n}{i}\inprod{D^{(n-i)}}{ L^{(i)}E}
\\
\allowdisplaybreaks &=\binom{n}{1}\inprod{\sum_{k=1}^{n-1}\binom{n-1}{k}L^{(k)} D^{(n-1-k)}}{D^{(1)}}+\sum_{k=2}^n\binom{n}{k}\inprod{D^{(n-k)}}{ L^{(k)}E}.
\end{align*}
This shows  (\ref{phireduction}) for $i=1$. Assume now that we proved it for a given $i$. We separate the terms with $k=1$ and $k=i+1$ in (\ref{phireduction}) and group them together to get
\begin{align*}
 (2s-1)\phi^{(n)}(s)&=\binom{n}{i}\inprod{ \binom{n-i}{1}L^{(1)}D^{(n-i-1)}}{D^{(i)}}\\ 
 \allowdisplaybreaks &+\binom{n}{i+1}\inprod{D^{(n-(i+1))}}{\sum_{m=0}^{i-1}\binom{i+1}{m}L^{(i+1-m)}D^{(m)}}\\
\allowdisplaybreaks  &+\binom{n}{i}\inprod{\sum_{k=2}^{n-i}\binom{n-i}{k}L^{(k)}D^{(n-i-k)}}{D^{(i)}}\\&+
 \sum_{k=i+2}^n\binom{n}{k}\inprod{D^{(n-k)}}{ \sum_{m=0}^{i-1}\binom{k}{m}L^{(k-m)}D^{(m)}}.
 \end{align*}
 We use the obvious identity for binomial coefficients
 $$\binom{n}{i}\binom{n-i}{1}=\binom{n}{i+1}\cdot (i+1)$$
 and bump the summation variable by $i$ in the third sum to get
 \begin{align*}
& \binom{n}{i+1}\inprod{D^{(n-(i+1))}}{(i+1)L^{(1)}D^{(i)}+\sum_{m=0}^{i-1}\binom{i+1}{m}L^{(i+1-m)}D^{(m)}}\\
 &+\sum_{k=i+2}^n\binom{n}{i}\binom{n-i}{k-i}\inprod{L^{(k-i)}D^{(n-k)}}{D^{(i)}}+
 \sum_{k=i+2}^n\binom{n}{k}\inprod{D^{(n-k)}}{ \sum_{m=0}^{i-1}\binom{k}{m}L^{(k-m)}D^{(m)}}.
  \end{align*}
  We use
  $$ \binom{n}{i}\binom{n-i}{k-i}=\binom{n}{k}\binom{k}{i}$$ and (\ref{Dnresolvent}) to see that the expression is now
 \begin{align*}
 &\binom{n}{i+1}\inprod{\sum_{k=1}^{n-i-1}\binom{n-i-1}{k}L^{(k)}D^{(n-i-1-k)}}{-R\left(\sum_{m=0}^{i}\binom{i+1}{m}L^{(i+1-m)}D^{(m)}\right)}
 \\
& +\sum_{k=i+2}^n\binom{n}{k}\binom{k}{i}\inprod{L^{(k-i)}D^{(n-k)}}{D^{(i)}}+
 \sum_{k=i+2}^n\binom{n}{k}\inprod{D^{(n-k)}}{ \sum_{m=0}^{i-1}\binom{k}{m}L^{(k-m)}D^{(m)}}.\end{align*}
 We use (\ref{Dnresolvent}) again to get \begin{align*}
 &\binom{n}{i+1}\inprod{\sum_{k=1}^{n-i-1}\binom{n-i-1}{k}L^{(k)}D^{(n-i-1-k)}}{D^{(i+1)}}\\
 &+\sum_{k=i+2}^n\binom{n}{k}\left(\binom{k}{i}\inprod{L^{(k-i)}D^{(n-k)}}{D^{(i)}}+
 \inprod{D^{(n-k)}}{ \sum_{m=0}^{i-1}\binom{k}{m}L^{(k-m)}D^{(m)}}\right).\end{align*}
Finally we get \begin{align*}
(2s-1)\phi^{(n)}(s)=& \binom{n}{i+1}\inprod{\sum_{k=1}^{n-i-1}\binom{n-i-1}{k}L^{(k)}D^{(n-i-1-k)}}{D^{(i+1)}}\\&+
 \sum_{k=i+2}^n\binom{n}{k}\inprod{D^{(n-k)}}{ \sum_{m=0}^{i}\binom{k}{m}L^{(k-m)}D^{(m)}}.\\
 \end{align*}
\end{proof}

We can now use Proposition \ref{phireduction} to translate information
about $D^{(i)}(z,s)$ at $s_j$ into information about $\phi^{(k)}(s)$ at $s_j$:

\begin{theorem}\label{orderpolephi(s)} Assume that $D^{(q)}(z,s)$ is regular at $s_j=1/2+ir_j$ for
  $q=0,1, \ldots, n-1$. Then 
\begin{enumerate}
\item \label{one} the function $\phi^{(l)}(s)$ is regular at $s_j$ for $l=0, 1,\ldots 2n-1$.
\item  \label{two} the function $\phi^{(2n)}(s)$ has at most a simple pole at $s_j$. Furthermore
  the residue at $s_j$ is given by
  \begin{equation*}
    \res_{s=s_j}\phi^{(2n)}(s)= -\phi(s_j)\binom{2n}{n}\norm{\res_{s=s_j}D^{(n)}(z,s)}^2.
  \end{equation*}

\end{enumerate}

\end{theorem}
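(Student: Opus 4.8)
The plan is to read off all the information from the reduction identity of Proposition~\ref{phireduction-result}, choosing its free parameter $i$ so that only the coefficients $D^{(q)}(z,s)$ with $q\le n$ --- exactly the ones the hypothesis controls --- actually enter. Two structural facts will be used throughout. First, the family $D(z,s,\e)$ inherits from the real metric deformation the reality relation $\overline{D^{(q)}(z,\bar s)}=D^{(q)}(z,s)$ (and each $L^{(k)}$ has real coefficients), so that each pairing in Propositions~\ref{scatteringphin} and~\ref{phireduction-result} is, after the conjugation built into $\inprod{\cdot}{\cdot}$, the genuinely meromorphic function $\inprod{A(z,s)}{B(z,\bar s)}=\int_M A(z,s)B(z,s)\,d\mu(z)$ of $s$. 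Second, the same relation gives that $D^{(q)}(z,\cdot)$ is regular at $1-s_j=\bar s_j$ precisely when it is regular at $s_j$, with $\res_{s=1-s_j}D^{(q)}(z,s)=\overline{\res_{s=s_j}D^{(q)}(z,s)}$; this matters because the $\bar s$-factors in Proposition~\ref{phireduction-result} are evaluated near $1-s_j$ as $s\to s_j$.

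For part~\ref{one} I would argue in two cases. For $0\le l\le n$ (with $l=0$ being the classical holomorphy of $\phi$ on $\Re s=1/2$ away from $s=1/2$), Proposition~\ref{scatteringphin} writes $(2s-1)\phi^{(l)}(s)$ as an integral built from $D^{(0)},\dots,D^{(l-1)}$, all regular at $s_j$ by hypothesis, and $2s_j-1\neq0$; hence $\phi^{(l)}$ is regular at $s_j$. For $n<l\le 2n-1$ apply Proposition~\ref{phireduction-result} with $n$ replaced by $l$ and with $i$ in the nonempty range $l-n\le i\le n-1$: then every $D$-factor on the right has order at most $\max(i,\,l-i-1)\le n-1$, hence is regular at $s_j$ and, by the reality remark, at $1-s_j$ as well. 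So $(2s-1)\phi^{(l)}(s)$, and therefore $\phi^{(l)}$, is regular at $s_j$.

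For part~\ref{two} I would specialize Proposition~\ref{phireduction-result} to $n\mapsto 2n$, $i=n$. The resulting second sum involves only $D^{(q)}$ with $q\le n-1$ and is regular at $s_j$. In the first term set $g(z,s):=\sum_{k=1}^{n}\binom{n}{k}L^{(k)}D^{(n-k)}(z,s)$, compactly supported and regular at $s_j$; by Theorem~\ref{eisensteinregularity} one has $D^{(n)}(z,s)=-R(s)g(z,s)$, so \eqref{resolventpolarstructure} shows $D^{(n)}(z,s)$ has at most a simple pole at $s_j$, with residue $R_j(z):=\res_{s=s_j}D^{(n)}(z,s)=\tfrac1{2s_j-1}\,P\,g(z,s_j)$, where $P$ is the orthogonal projection onto the $s_j(1-s_j)$-eigenspace; thus $R_j$ is a cusp form and $P\,g(z,s_j)=(2s_j-1)R_j$. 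Consequently $\phi^{(2n)}(s)=\tfrac1{2s-1}\bigl(\binom{2n}{n}\inprod{g(z,s)}{D^{(n)}(z,\bar s)}+(\text{regular})\bigr)$ has at most a simple pole at $s_j$, and after rewriting the pairing as $\int_M g(z,s)D^{(n)}(z,s)\,d\mu$ via reality and taking residues,
\[
\res_{s=s_j}\phi^{(2n)}(s)=\frac{\binom{2n}{n}}{2s_j-1}\int_M g(z,s_j)R_j(z)\,d\mu(z)=\binom{2n}{n}\int_M R_j(z)^2\,d\mu(z),
\]
where the last step uses $P\,g(z,s_j)=(2s_j-1)R_j$ together with the orthogonality of $(1-P)g(z,s_j)$ to the eigenspace --- which contains $\overline{R_j}$, the eigenvalue being real.

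The final, and I expect decisive, step is to convert $\int_M R_j^2\,d\mu$ into $-\phi(s_j)\norm{R_j}^2$. I would extract this from the functional equation~\eqref{dnfunctional} for $D^{(n)}$: under the hypotheses every term $\phi^{(i)}(s)D^{(n-i)}(z,1-s)$ with $i\ge1$ is regular at $s_j$ (the $\phi^{(i)}$ by part~\ref{one}, since $i\le n\le 2n-1$, and $D^{(n-i)}(z,1-s)$ because $n-i\le n-1$ and by the reality remark), while the $i=0$ term $\phi(s)D^{(n)}(z,1-s)$ has residue $-\phi(s_j)\overline{R_j}$ at $s_j$ (since $D^{(n)}$ has residue $\overline{R_j}$ at $1-s_j$ and $(1-s)-(1-s_j)=-(s-s_j)$). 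Comparing residues on the two sides of~\eqref{dnfunctional} yields $R_j=-\phi(s_j)\overline{R_j}$, hence $\int_M R_j^2\,d\mu=-\phi(s_j)\int_M\abs{R_j}^2\,d\mu=-\phi(s_j)\norm{R_j}^2$, which is the asserted formula. The main obstacles are therefore the combinatorial bookkeeping in the choice of $i$ (making sure all uncontrolled $D^{(q)}$ drop out, and keeping straight which terms are evaluated at $s_j$ versus at $1-s_j=\bar s_j$) and this last functional-equation identity, which is what turns the naively holomorphic pairing $\int_M R_j^2\,d\mu$ into the unitarily normalized $-\phi(s_j)\norm{R_j}^2$.
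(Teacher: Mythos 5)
Your proposal is correct and follows essentially the same route as the paper: part (1) via Proposition \ref{phireduction-result} with a choice of $i$ making all $D$-factors of order at most $n-1$ (the paper uses $i=\lfloor l/2\rfloor$ uniformly, you split off $l\le n$ via Proposition \ref{scatteringphin}), and part (2) via $i=n$, the spectral projection formula for $\res_{s=s_j}D^{(n)}$, and the relation $R_j=-\phi(s_j)\overline{R_j}$ from the functional equation (\ref{dnfunctional}). The only cosmetic difference is that you organize the final residue computation through the projection identity $Pg(z,s_j)=(2s_j-1)R_j$ rather than expanding in the orthonormal basis $\{u_{j,i}\}$ as in the paper's (\ref{Dn-residue}); the two are identical.
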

\begin{proof} We  take
  $n=l$ for the various values of $l\leq 2n$ in Proposition \ref{phireduction-result}.
Assume first that $l<2n$ and let in Proposition
\ref{phireduction-result}   the integer $i$ be the integral part of $l/2$. Then $i<n$
and $l-i-1<n$ and therefore, by the assumption on $D^{(q)}(z,s)$ for
$q=0,1, \ldots, n-1$, we see immediately - using the expression on the right of
(\ref{phireduction}) - that $\phi^{(l)}(s)$ is regular
at $s_j$.

To prove the claim about $\phi^{(2n)}(s)$ we choose the integer $i$  in Proposition
\ref{phireduction-result} to equal $n$.  By (\ref{phireduction}) and the
assumptions on  $D^{(q)}(z,s)$ we see that $\phi^{(2n)}(s)$ has at most a simple pole at $s_j$ and that the residue is given by  
\begin{equation}
  \label{good-expression}
  \frac{1}{2s_j-1}\binom{2n}{n}\int_M{
    \sum_{k=1}^{n}\binom{n}{k}L^{(k)}D^{(n-k)}(z,
    s_j)}{\res_{s=s_j}D^{(n)}(z, s)}d\mu(z).
\end{equation}
From (\ref{dnfunctional}) and the assumptions on $D^{(q)}(z,s)$ we get
that
\begin{equation*}
  \res_{s=s_j}D^{(n)}(z,s_j)=\phi(s_j)\res_{s=s_j}D^{(n)}(z,1-s).
\end{equation*}
Since $D^{(n)}(z,s)=\overline{D^{(n)}(z,\overline{s})}$ we have also,
since $1-\overline{s_j}=s_j$, that 
\begin{equation*}
\res_{s=s_j}D^{(n)}(z,1-s)=-\overline{\res_{s=s_j}D^{(n)}(z,s)},  
\end{equation*} and therefore
\begin{equation}\label{get-to-conjugate}\res_{s=s_j}D^{(n)}(z,s)=-\phi(s_j)\overline{\res_{s=s_j}D^{(n)}(z,s)}.  \end{equation}
From (\ref{Dnresolvent}) and (\ref{resolventpolarstructure}) we see that
\begin{equation}\label{Dn-residue}
  \res_{s=s_j}D^{(n)}(z,s)=\frac{1}{2s_j-1}\sum_{i=1}^m
  \inprod{\sum_{k=1}^n\binom{n}{k}L^{(k)}D^{(n-k)}(z,s_j)}{u_{j,i}}u_{j,i},
\end{equation} where $\{u_{j,i}\}$ is an orthonormal basis for  the eigenspace
for the eigenvalue $s_j(1-s_j)$. Inserting this in
(\ref{good-expression}) (after first using (\ref{get-to-conjugate})) we find that
$\res_{s=s_j}\phi^{(2n)}(s)$ is given by 
\begin{equation}\
-\frac{\phi(s_j)}{\abs{2s_j-1}^2}\binom{2n}{n}\sum_{i=1}^m\abs{\inprod{\sum_{k=1}^n\binom{n}{k}L^{(k)}D^{(n-k)}(z,s_j)}{u_{j,i}}}^2
\end{equation}
 which is easily seen to be the claimed result comparing
(\ref{Dn-residue}). 
\end{proof}
\begin{remark}
We need one more ingredient about $\phi(s,\e)$ before proving Theorem~\ref{maintheorem}. Since $\phi (s, \e)=\overline{\phi (\bar s, \e)}$
we deduce that
\begin{equation}\label{phiprimeoverphi}\frac{\phi'(s, \e)}{\phi(s, \e)}=\overline{\left(\frac{\phi' (\bar s, \e)}{\phi (\bar s, \e)}\right)},\end{equation} where $'$ denotes derivative in the $s$ variable, as is standard in the Selberg theory of the trace formula.
This follows from the fact that for an analytic function $f$ we have $$\frac{d}{ds}\overline {f(\bar
  s)}=\overline{f'(\bar s)}.$$  
\end{remark}
\begin{proof}[Proof of Theorem~\ref{maintheorem}]
We want to track the movement of the embedded eigenvalue/resonance in the
left half-plane. We define $\Gamma$ to be the semicircular contour $\gamma_1 (t)=ue^{it}+s_j$, $\pi/2\le t\le 3\pi/2$ followed by the vertical segment $\gamma_2 (t)=s_j+it$, $ -u\le t\le u$. Here   $u$ is chosen small enough, so that the only singular point for $\e=0$ inside  the ball $B(s_j, u)$ is $s_j$ with multiplicity $m=m(s_j)$. This contour is traversed counterclockwise.
For small enough $\e$ the total multiplicities of  the singular points $s_j(\e)$ inside $B(s_j, u)$ is $m(s_j)$. Perturbation theory allows to study the weighted mean $\hat{s}(\e)$ of the branches of eigenvalues of $B(\e)$. We have
\begin{equation}\label{blah}m(\hat{s} (\e)-s_j)=-\frac{1}{2\pi i}\int_{\Gamma} (s-s_j)\frac{\phi' (s, \e)}{\phi (s, \e)}\, ds+\sum_{j\in C}(s_j(\e)-s_j),\end{equation}
where $C$ is indexing the cusp forms eigenbranches inside $B(s_j, u)$, i.e. the cusp forms that remain cusp forms. Let the last sum be denoted by $p(\e)$. The reason for using $\Gamma$ and not the whole $\partial B(s_j, u)$ is that on the right half-disc $\phi (\e)$ has zeros, which we do not want to count.
Notice that $\overline{\int_{\gamma}f(s)\, ds}=\int_{\bar \gamma} \bar
f(\bar s)\, ds$ and, therefore, by (\ref{phiprimeoverphi})
$$m(\overline{\hat{s} (\e)-s_j})=\frac{1}{2\pi i}\int_{\bar \Gamma} (s-\bar s_j)\overline{\left(\frac{\phi' (\bar s, \e)}{\phi (\bar s, \e)}\right)}\, ds+\overline{p(\e)}=\frac{1}{2\pi i}\int_{\bar \Gamma}(s-\bar s_j)\frac{\phi' (s, \e)}{\phi (s, \e)}\, ds+\overline{p(\e)}.$$
Denoting by $-\gamma$ the contour $\gamma$ traversed in the opposite direction, we get
\begin{align*}m(\overline{\hat{s} (\e) -s_j})&=-\frac{1}{2\pi i }\int_{-\bar \Gamma}(s-\bar s_j)\frac{\phi'(s, \e)}{\phi (s, \e)}\, ds+\overline{p(\e)}\\&=-\frac{1}{2\pi i}\int_{T^{-1}(-\bar\Gamma)}(1-w-\bar s_j)\frac{\phi'(1-w, \e)}{\phi (1-w, \e)}\, (-dw)+\overline{p(\e)},\end{align*}
where $s=T(w)=1-w$ is a conformal map. By the functional equation
$\phi (s, \e)\phi (1-s, \e)=1$, see (\ref{phifunctional}), we get
$$\phi'(s, \e)\phi (s, \e)-\phi (s, \e)\phi'(1-s, \e)=0,$$ which implies $$\frac{\phi'(s, \e)}{\phi (s, \e)}=\frac{\phi'(1-s, \e)}{\phi(1-s, \e)}.$$
We plug this into the expression for $m(\overline{\hat{s} (\e) -s_j})$ to get
\begin{equation}\label{blahblahblah}m(\overline{\hat{s} (\e) -s_j})=-\frac{1}{2\pi i}\int_{T^{-1}(-\bar\Gamma)}(w- s_j)\frac{\phi'(w, \e)}{\phi (w, \e)}\, dw+\overline{p(\e)}. \end{equation}
We sum (\ref{blah}) and (\ref{blahblahblah}) and notice that the
cuspidal branch contributions cancel, because for a cuspidal branch
$s_{j,l}(\e)$ the function  $s_{j,l}(\e)-s_j$ is purely imaginary.
 We deduce that
 \begin{eqnarray}\label{flocke} \nonumber 2m\Re (\hat{s}(\e)-s_j)&=&-\frac{1}{2\pi i}\int_{\Gamma+T^{-1}(-\bar \Gamma)}(s-s_j)\frac{\phi'(s, \e)}{\phi(s, \e)}\, ds\\&=&  -\frac{1}{2\pi i}\int_{\partial B(s_j,
 u)} (s-s_j)\frac{\phi'(s, \e)}{\phi(s, \e)}\, ds,\end{eqnarray}
 since the contribution from the line
 segment on $\Re(s)=1/2$ from $\Gamma$ and $T^{-1}(-\bar \Gamma)$ cancel.
  By uniform convergence 
 we can differentiate the last formula in $\e$. We get
 \begin{align}\label{mosedebatteri}&\left.2m\frac{d^{2n}}{d\e^{2n}}\Re (\hat{s} (\e))\right|_{\e=0}=-\frac{1}{2\pi i}\int_{\partial B(s_j, u)}(s-s_j)\frac{d^{2n}}{d\e^{2n}}\left.\left(\frac{\phi'(s, \e)}{\phi (s, \e)}\right)\right|_{\e=0}\, ds\\
\nonumber &=-\frac{1}{2\pi i}\int_{\partial B(s_j, u)}(s-s_j)\sum_{k=0}^{2n}\binom{2n}{k} \left.\frac{d^{k}\phi'(s, \e)}{d\e^{k}}\right|_{\e=0}\left.\frac{d^{2n-k}(\phi (s, \e)^{-1})}{d\e^{2n-k}}\right|_{\e=0}\, ds.
\end{align}
 We can interchange the order of differentiation 
 $$\frac{d^k}{d\e^k}\phi'(s, \e)=\frac{d}{ds}\phi^{(k)}(s)$$ and see that
 this is regular at $s_j$ for $k<2n$ by Theorem \ref{orderpolephi(s)}.
 On the other hand for $k=2n$ it has a double pole at $s_j$ by the same theorem. 
 Concerning
 \begin{equation*}
   \frac{d^{2n-k}}{d\e^{2n-k}}\phi(s,\e)^{-1}
 \end{equation*}
we argue as follows: We  differentiate $m$ times $\phi(s,\e)^{-1}\phi(s,\e)=1$ to get
\begin{equation*}
  \sum_{k=0}^m\binom{m}{k}\left.\frac{d^{k}}{d\e^{k}}\phi(s,\e)^{-1}\right|_{\e=0}\phi^{(m-k)}(s,0)=0. 
\end{equation*}
Let $m$ be less than $2n$. By Theorem \ref{orderpolephi(s)}, the fact
that $\phi(s)$ is unitary on $\Re(s)=1/2$, and by
solving for $\left.\frac{d^{m}}{d\e^{m}}\phi(s,\e)^{-1}\right|_{\e=0}$,
 we see that 
$\left.\frac{d^{m}}{d\e^{m}}\phi(s,\e)^{-1}\right|_{\e=0}$ is regular
at $s_j$. For $m=2n$ we see, by the same
argument, that
$\left.\frac{d^{2n}}{d\e^{2n}}\phi(s,\e)^{-1}\right|_{\e=0}$ has at most
a simple pole at $s_j$.

We can now determine the order of the pole of the integrand of the
right-hand side in (\ref{mosedebatteri}): By the above considerations
we see that the only non-regular term occurs for $k=2n$. This  is the term
 \begin{equation*}
  (s-s_j)\frac{d \phi^{(2n)}(s,0)}{ds}\phi^{-1}(s),
\end{equation*}
which has at most a simple pole. By the residue theorem the expression in (\ref{mosedebatteri})  equals minus the residue of
\begin{equation*}(s-s_j)\frac{d \phi^{(2n)}(s,0)}{ds}\phi^{-1}(s).\end{equation*} Since the leading
term in the Laurent expansion of the derivative in $s$ of  $\phi^{(2n)}(s,0)$
equals $-(\res_{s=s_j}\phi^{(2n)}(s,0))/(s-s_j)^2$ we conclude
that 
\begin{align*}
  \left.2m\frac{d^{2n}}{d\e^{2n}}\Re (\hat{s}
    (\e))\right|_{\e=0}&=\frac{\res_{s=s_j}\phi^{(2n)}(s,0)}{\phi(s_j,0)}\\
&=-\binom{2n}{n}\norm{\res_{s=s_j}D^{(n)}(z,s)}^2,
\end{align*}
where, in the last equality, we used Theorem \ref{orderpolephi(s)} again.
This completes the proof of the theorem.\end{proof}

\begin{proof}[Proof of Corollary \ref{cykel}]
The direction that a pole of some $D^{(m)}(z, s)$ at $s_j$ implies
that at least one embedded eigenvalue becomes a resonance is proved as
follows: If $n$ is the smallest number such that $D^{(n)}(z, s)$ has a
pole at $s_j$, then from Theorem \ref{maintheorem}, we have that $\Re
\hat{s}_j^{(2n)}\ne 0$, while $\Re \hat{s}_j^{(2m)}=0$ for $m<n$. If
$k$ is the smallest integer with $\Re\hat{s}_j^{(k)}\ne 0$, then
$k=2n$, since an odd leading term in the Taylor series of $\Re
\hat{s}_j(\e)$ will force $\Re\hat{s}_j(\e)$ to take values larger and
smaller than $1/2$. This is impossible, since a singular point cannot
move to the right half-plane. Therefore $\hat {s}_j(\e)$ does not have
real part equal to $1/2$ for all small $\e$ and one of the cuspidal eigenvalues has
to dissolve.
The opposite direction is obvious: If all embedded eigenvalues remain embedded eigenvalues, then $\Re \hat{s}_j(\e)=1/2.$ This implies that $\Re \hat{s}_j^{(2n)}=0$ for all $n\in \N$.

\end{proof}


\section{Character varieties}\label{character-varieties}

\subsection{Higher order
  dissolving for character
  varieties}\label{sec:dissolving-character-case} We now describe how
the above theory can be modified for the twisted spectral problem
related to character varieties. Let $\G$ be a
discrete cofinite subgroup of $\psl$ with quotient $M=\GmodH$, where
$\H$ is the upper half-plane. For simplicity, we still  assume that
$\Gamma$ has precisely one cusp, which we
assume is at infinity. Let $f(z)\in S_2(\Gamma)$ be a holomorphic cusp form of weight $2$. Then $\omega=\Re (f(z)\, dz) $ and $\omega=\Im (f(z)\, dz)$ are harmonic cuspidal 1-forms.  Let $\alpha $ be a compactly supported $1$-form in the same cohomology class as one of them. For the exact construction see e.g. \cite[Prop.~2.1]{petridis-risager}.  We fix $z_0\in\H$.
Define a family of characters 
$$ \begin{array}{lccc}\chi(\cdot,\e):&\Gamma&\rightarrow& S^1\\
                   &\g&\mapsto &\exp(-2\pi i\e\int_{z_0}^{\gamma z_0}\alpha).
\end{array}
$$
We consider the space
$$\L(\GmodH,\overline \chi(\cdot,\epsilon))$$ of
$(\G,\overline \chi(\cdot,\epsilon))$-automorphic functions, i.e. functions
$f:\H\to\C$ where $$f(\g z)=\overline{\chi}(\g,\e)f(z),$$ and
$$\int_{\GmodH}\abs{f(z)}^2d\mu(z)<\infty.$$  The automorphic Laplacian $\tilde L(\e)$ is
the closure of the operator acting on smooth functions in
$\L(\GmodH,\overline \chi(\cdot,\epsilon))$ by $\Delta f$. We denote
its resolvent by $\tilde R(s,\e)=(\tilde L(\e)+s(1-s))^{-1}$.
We introduce unitary operators
\begin{equation}\begin{array}{rccc}
U(\e):&\L(\GmodH)&\to& \L(\GmodH,\overline \chi(\cdot,\epsilon))\\
&f&\mapsto&\exp\left(2\pi i\e\int_{z_0}^z\alpha\right)f(z). 
\end{array}
\end{equation}
We then define
\begin{eqnarray}
\label{twisted-laplacian-1v}L(\e)&=&U^{-1}(\e)\tilde L(\e)U(\e)\\
R(s,\e)&=&U^{-1}(\e)\tilde R(s,\e)U(\e). 
\end{eqnarray}
The operators $L(\e)$ on $\L(\GmodH)$ and $\tilde L(\e)$ on $
\L(\GmodH,\overline \chi(\cdot,\epsilon))$ are unitarily equivalent. 
Notice  that $L(\e)$ and $R(s,\e)$ act on the fixed space
$\L(\GmodH)$, which allows to apply perturbation theory. It is easy to verify that 
\begin{align}\label{muffinstogo}L(\e)h=\Delta h +4\pi i\e\modsym{dh}{\alpha}&-2\pi i\e\delta(\alpha)h-4\pi^2\e^ 2\modsym{\a}{\a}\\
\label{coffeetogo}(L(\e)+s(1-s))R(s,\e)=&R(s,\e)(L(\e)+s(1-s))=I.
\end{align}
Here \begin{eqnarray*}\modsym{f_1dz+f_2d\overline z}{g_1dz+g_2d\overline z}&=&2y^ 2(f_1\overline{g_1}+f_2\overline{g_2})\\
\delta(pdx+qdy)&=&-y^2(p_x+q_y).
\end{eqnarray*}
 We notice also that 
\begin{align}
\label{firstLvariation} L^{(1)}(\e)h&=4\pi i\modsym{dh}{\a}-2\pi i\delta (\alpha)h-8\pi^2 \e\modsym{\a}{\a},\\ \allowdisplaybreaks
L^{(2)}(\e)h&=-8\pi^2 \modsym{\a}{\a},\\\allowdisplaybreaks
L^{(i)}(\e)h&=0,\quad\textrm{ when }i\geq 3.
\end{align}

We notice that $L^{(i)}$ are compactly supported operators and that $\delta(\omega)=0$ for a harmonic form $\omega$. 

We let $E(z,s,\epsilon)$ be the usual Eisenstein series for the system
$(\Gamma,\overline{\chi}(\cdot,\e))$ and define the $\Gamma$-invariant function 
$$D(z,s,\e)=U^{-1}(\e)E(z,s,\e).$$
The Phillips--Sarnak condition for dissolving cusp forms in this setting is:
\begin{equation}\inprod{L^{(1)} u_j}{E(z, s_j)}\ne 0.\end{equation}
The family of operators $L(\e)$ do not arise from an admissible
metric, but all the properties described in Section
\ref{introductorymaterial} are well-known, and the proof of the
dissolving theorem carries over almost verbatim, so in
this case the higher order analogue of Fermi's golden rule also holds:

\begin{theorem}\label{maintheorem-for-characters}
 Assume that for $k=0, 1, \ldots , n-1$ the functions $D^{(k)}(z, s)$
 are regular  close to a cuspidal eigenvalue $s_j=1/2+ir_j$.
 Then $D^{(n)}(z, s)$ has at most a first order pole at $s_j$. 
 \begin{enumerate}
\item  If $D^{(n)}(z, s)$ has a pole at $s_j$, then the embedded eigenvalue becomes a resonance.\item  Moreover, with $\norm{\cdot}$ the standard $L^2$-norm,
 \begin{equation}
 \Re {\hat{s}}_j^{(2n)}(0)=-\frac{1}{2m}\binom{2n}{n}\norm{     \res_{s=s_j}D^{(n)}(z, s)                }^2.\end{equation}
\end{enumerate} 
\end{theorem}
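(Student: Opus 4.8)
The plan is to reduce Theorem~\ref{maintheorem-for-characters} to Theorem~\ref{maintheorem} by checking that the three structural inputs behind the proof in Section~\ref{proofs} are available for the conjugated family $L(\e)$, $R(s,\e)$ acting on the fixed Hilbert space $\L(\GmodH)$, and then transcribing the argument. The first input is regularity of the generalized eigenfunctions: the Eisenstein series $E(z,s,\e)$ for $(\Gamma,\overline\chi(\cdot,\e))$ is built by the pseudo-Laplacian construction exactly as in the admissible case, the construction differentiates in $\e$ at every step (compare \cite{petridis,petridis-risager}), and hence so does $D(z,s,\e)=U^{-1}(\e)E(z,s,\e)$, giving the analogue of Theorem~\ref{eisensteinregularity}. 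The key point is that, by (\ref{firstLvariation}) and the formulas following it, $L^{(1)}$ and $L^{(2)}$ are compactly supported and $L^{(i)}=0$ for $i\geq 3$, so the recursion
\begin{equation*}
D^{(n)}(z,s)=-R(s)\sum_{i=1}^{n}\binom{n}{i}L^{(i)}D^{(n-i)}(z,s)=-R(s)\Bigl(nL^{(1)}D^{(n-1)}(z,s)+\binom{n}{2}L^{(2)}D^{(n-2)}(z,s)\Bigr)
\end{equation*}
is $R(s)$ applied to a compactly supported function and therefore continues meromorphically to $\C$. The second input --- meromorphic continuation of $R(s,\e)$, the limiting absorption principle (\ref{limit-absorption}), and the polar structure (\ref{resolventpolarstructure}) --- is classical for character varieties; see \cite{sarnak1,phillips-sarnak2}.

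Next I would transcribe the bookkeeping. The functional equations (\ref{eq:FE}) and (\ref{phifunctional}), the logarithmic-derivative identity (\ref{phiprimeoverphi}), and the conjugation symmetries of $D^{(n)}$ and $\phi^{(n)}$ persist because $\e$ and the $1$-form $\alpha$ are real; one only needs to keep track of the factors of $(-1)^{n}$ that the twisted normalization inserts into these relations, and they cancel in pairs when one computes $\res_{s=s_j}\phi^{(2n)}(s)$, leaving the stated formula unchanged. With these in hand, Propositions~\ref{scatteringphin} and~\ref{phireduction-result} and Theorem~\ref{orderpolephi(s)} go through without modification, since their proofs use only the recursion above, self-adjointness of the $L^{(i)}$, and integration by parts against compactly supported operators. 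The third input is the singular-set structure result --- that under the real-analytic family a singular point of multiplicity $m$ splits into branches with at most algebraic singularities --- which for character perturbations is established in \cite{sarnak1,phillips-sarnak2}. Granting it, the contour-integral argument from the proof of Theorem~\ref{maintheorem} --- the semicircular contour $\Gamma$, summing (\ref{blah}) and (\ref{blahblahblah}), cancelling the cuspidal branch contributions, and reading off minus the residue of $(s-s_j)\frac{d\phi^{(2n)}(s,0)}{ds}\phi^{-1}(s)$ --- applies verbatim, yielding statement~(1) (the sign of the leading derivative forcing the branch off $\Re(s)=1/2$, as in the proof of Corollary~\ref{cykel}) together with
\begin{equation*}
\Re\,\hat s_j^{(2n)}(0)=-\frac{1}{2m}\binom{2n}{n}\norm{\res_{s=s_j}D^{(n)}(z,s)}^{2}.
\end{equation*}

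I expect the only genuinely nontrivial point to be the third input, and it is not settled inside this paper: it requires identifying the spectrum of the cut-off wave operator $B(\e)$ with the singular set $\sigma(\e)$ in the twisted setting, which is what licenses regular perturbation theory and hence the analogue of Theorem~\ref{singularsetproperties}. This is the delicate part of \cite{phillips-sarnak2,sarnak1}; once it is taken as known, the remainder is a mechanical adaptation of Section~\ref{proofs}, resting only on the compact support and self-adjointness of $L^{(1)},L^{(2)}$ and on the standard analytic structure of $R(s,\e)$.
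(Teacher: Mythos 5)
Your proposal matches the paper's treatment: the paper states that all the properties of Section \ref{introductorymaterial} are well known for character varieties and that the proof of Theorem \ref{maintheorem} then carries over almost verbatim, which is precisely your reduction. Your accounting of the three structural inputs (the recursion with $L^{(1)},L^{(2)}$ compactly supported and $L^{(i)}=0$ for $i\ge 3$, the resolvent/limiting-absorption structure, and the singular-set theorem from \cite{phillips-sarnak2,sarnak1}) and of the conjugation signs is more explicit than the paper's one-line justification, but it is the same argument.
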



\subsection{Multiparameter perturbations}\label{multivariate}
To analyze the higher order dissolving conditions and relate it to
 a Dirichlet series it is useful  to 
work with families of characters depending on several parameters.
To do this we introduce the following notation. Given $\alpha_l$,
$l=1, \ldots, k$ harmonic, 
compactly supported 1-forms on $M$, we let ${\underline{\alpha}}=(\alpha_1,\ldots,\alpha_k)$, ${\underline{\e}}=(\e_1, \ldots, \e_k)$ and define
\begin{equation}\label{d-alphaseries}
D(z, s, \underline \alpha)=\sum_{\gamma\in \GinfmodG}\prod_{l=1}^k \left(\int_{i\infty}^{\gamma z}\alpha_l\right)
 \Im (\gamma z)^s.
\end{equation}
Such series have been studied in \cite[Lemma~2.4]{petridis-risager},
and we give a quick review of some of their properties: 

Let \begin{equation}\label{karakter}\chi(\gamma,\underline \e)=\prod_{l=1}^k{\exp\left(-2\pi i\e_l\int_{z_0}^{\gamma z_0}\alpha_l\right)}.
\end{equation} be the multiparameter character induced from $\underline\alpha$. We
know from the theory of Eisenstein series (See e.g. \cite{selberg,
  hejhal, iwaniec}) that $$E(z,s,\underline \e)=\sum_{\gamma\in
  \GinfmodG}\chi(\gamma,\underline\e)\Im (\gamma z)^s,\quad \Re(s)>1.$$
admits meromorphic continuation to $\C$ and that it satisfies a functional equation 
\begin{equation}\label{functional-equation-E}E(z,s,\underline \e)=\phi(s,\underline
\e)E(z,1-s,\underline\e).\end{equation} If we let 
\begin{equation}
U(\underline \e)f=\prod_{l=1}^k\exp\left(2\pi i\e_l\int_{z_0}^z\alpha_l\right)f(z)
\end{equation}
we see that when $\Re(s)>1$
\begin{equation}\label{original-petridis-obs}D(z, s, \underline{\alpha})=\left.\frac{\partial^k}{\partial
  \e_1\cdots\partial\e_k}U(-\underline \e)E(z,s,\underline \e)\right\vert
_{\underline \e=\underline 0}.\end{equation}
We have -- analogous to the 1-parameter situation described in the beginning of
this section --  that if $L(\underline\e)=U^{-1}(\underline \e)\tilde
L(\underline \e)U(\underline \e)$  then  
\begin{equation}\begin{array}{rcl}
L(\underline \e)h&=&\Delta h+4\pi i \sum_{l=1}^{k}\epsilon_l\langle dh, \alpha_k\rangle-2\pi i \left(\sum_{l=1}^{k}\e_l \delta(\alpha_k)\right) h
 \\ &&-4\pi ^2 \left(\sum_{l,m=1}^{k}\e_l\e_m\langle
   \alpha_l,\alpha_m\rangle\right)h.\end{array}\end{equation}
Using this we arrive at the following theorem:
\begin{theorem}\label{summary-D's}
  The function $D(z,s,\underline\alpha)$ admits meromorphic
  continuation to $\C$. Furthermore it satisfies the following:
\begin{enumerate}
\item\label{itscold} The  poles of $D(z,s,\underline\alpha)$ are included in  the singular set
  for the surface $M$, and the pole order at a singular point is at
  most $k$.
\item \label{recursion} For $\Re(s)>1/2$,  and $s$ not in the singular set, the function  $D(z,s,\underline\alpha)$ is square integrable and
  satisfies
$$D(z,s,\underline\alpha)=-R(s)\left(\sum_{l=1}^k\left.\partial_{\e_l}L(\underline
 \e)\right\vert_{\underline\e=\underline
 0}D(z,s,\underline\alpha_l)+\sum_{1\leq m<l\leq k}\left.\partial_{\e_l,\e_m}L(\underline
 \e)\right\vert_{\underline\e=\underline
 0}D(z,s,\underline\alpha_{l,m})\right),$$
where $\underline\alpha_l$ is $\underline\alpha$ with the $l$-th
component removed.  This equation provides the analytic continuation of
$D(z,s,\underline\alpha)$ using the meromorphic continuation of the
Green's function. The analytically continued function grows at most polynomially as
$z$ tends to a cusp.
  
\item \label{itsstillcold}For $1/2<\sigma_0< \Re(s)<\sigma_1$,  and $s$ not in the singular
  set, the function  $D(z,s,\underline \alpha)$ grows at most polynomially as
  $\abs{\Im(s)}\to\infty$, and $z$ is in a compact set.
\item \label{functional-equation} The function  $D(z,s,\underline\alpha)$ satisfies a functional
  equation. This is derived  by multiplying (\ref{functional-equation-E}) by
  $U(-\underline \e)$ and differentiating both sides, using
  (\ref{original-petridis-obs}).
\end{enumerate}
\end{theorem}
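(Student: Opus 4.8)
The plan is to deduce all four parts from the identity (\ref{original-petridis-obs}) --- which for $\Re(s)>1$ identifies $D(z,s,\underline\alpha)$ with the mixed partial derivative $\partial_{\e_1}\cdots\partial_{\e_k}\bigl(U(-\underline\e)E(z,s,\underline\e)\bigr)\big|_{\underline\e=\underline 0}$ --- together with the one-parameter theory of Sections \ref{introductorymaterial}--\ref{proofs}, an induction on the number $k$ of forms, and the properties of such series already worked out in \cite[Lemma~2.4]{petridis-risager}.

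The starting point is the resolvent recursion of item~(\ref{recursion}). Put $\mathcal E(z,s,\underline\e):=U(-\underline\e)E(z,s,\underline\e)=U^{-1}(\underline\e)E(z,s,\underline\e)$; since $E(z,s,\underline\e)$ is an eigenfunction of $\tilde L(\underline\e)$ and $L(\underline\e)=U^{-1}(\underline\e)\tilde L(\underline\e)U(\underline\e)$, we have $(L(\underline\e)+s(1-s))\mathcal E=0$. Apply $\partial_{\e_1}\cdots\partial_{\e_k}\big|_{\underline\e=\underline 0}$ and the Leibniz rule: the summand in which no $\e$-derivative hits the operator gives $(\Delta+s(1-s))D(z,s,\underline\alpha)$; the summands with exactly one derivative on $L$ give $\partial_{\e_l}L(\underline\e)\big|_{\underline 0}$ applied to $D(z,s,\underline\alpha_l)$; the summands with two give $\partial_{\e_l,\e_m}L(\underline\e)\big|_{\underline 0}$ applied to $D(z,s,\underline\alpha_{l,m})$; and all higher mixed $\e$-derivatives of $L(\underline\e)$ vanish because $L(\underline\e)$ is quadratic in $\underline\e$. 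As the $\alpha_l$ are compactly supported, so are the operators $\partial_{\e_l}L\big|_{\underline 0}$ and $\partial_{\e_l,\e_m}L\big|_{\underline 0}$; hence the right-hand side lies in $C_c^\infty(M)$ and we may solve for $D(z,s,\underline\alpha)$ by applying $R(s)$. This gives (\ref{recursion}) for $\Re(s)>1/2$, $s$ not singular, and, via the meromorphic continuation of the resolvent kernel, the asserted analytic continuation. Square-integrability for $\Re(s)>1/2$ follows because $\alpha_l$ vanishes in the cusp, so $\int_{i\infty}^{z}\alpha_l=0$ for $z$ high in the cusp and the identity coset contributes no $y^s$-term to the zeroth Fourier coefficient of $D(z,s,\underline\alpha)$; its constant term is therefore $y^{1-s}$ times a polynomial in $\log y$, which is square-integrable near the cusp precisely when $\Re(s)>1/2$.

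Item~(\ref{itscold}) then follows by induction on $k$, the base cases $k=0,1$ being $E(z,s)$ and $D^{(1)}(z,s)$ from Sections \ref{introductorymaterial}--\ref{proofs}: in (\ref{recursion}) the terms $D(z,s,\underline\alpha_l)$ have pole order $\le k-1$ and the terms $D(z,s,\underline\alpha_{l,m})$ pole order $\le k-2$ at singular points, and $R(s)$ --- whose only singularities are simple poles at the embedded eigenvalues, with residue the spectral projection, cf.~(\ref{resolventpolarstructure}) --- raises pole orders by at most one and creates no new singular points. The polynomial bounds in~(\ref{recursion}) (in $z$ as $z\to$ cusp) and~(\ref{itsstillcold}) (in $\abs{\Im(s)}$ on a strip $1/2<\sigma_0<\Re(s)<\sigma_1$, off the singular set) are established simultaneously by the same induction: on $\{\Re(s)\ge\sigma_1>1\}$ the series (\ref{d-alphaseries}) converges absolutely and, since $\abs{\Im(\gamma z)^s}=\Im(\gamma z)^{\Re(s)}$ does not involve $\Im(s)$, is uniformly bounded there; then (\ref{recursion}) propagates the bound across the strip by the standard estimates for the meromorphically continued resolvent kernel $r(z,z',s)$ in vertical strips away from the singular set --- polynomial in $\abs{\Im(s)}$ and in $\Im(z)$ --- applied to the compactly supported data built from the inductively controlled $D(z,s,\underline\alpha_l)$, $D(z,s,\underline\alpha_{l,m})$. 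Finally, for~(\ref{functional-equation}) I multiply (\ref{functional-equation-E}) by $U(-\underline\e)$ and apply $\partial_{\e_1}\cdots\partial_{\e_k}\big|_{\underline\e=\underline 0}$; by Leibniz and (\ref{original-petridis-obs}) the left side becomes $D(z,s,\underline\alpha)$ and the right side becomes $\sum_{S\subseteq\{1,\dots,k\}}\phi^{(S)}(s)\,D(z,1-s,\underline\alpha_S)$, where $\phi^{(S)}(s)$ is the mixed $\underline\e$-derivative of $\phi(s,\underline\e)$ in the variables indexed by $S$, evaluated at $\underline\e=\underline 0$, and $\underline\alpha_S$ denotes $\underline\alpha$ with the components indexed by $S$ removed.

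The main obstacle will be the two growth statements in~(\ref{recursion})--(\ref{itsstillcold}): meromorphic continuation and the pole-order bound are essentially formal once the recursion is in hand, and the functional equation is bookkeeping with the Leibniz rule, but controlling the continued resolvent kernel \emph{uniformly} in $\Im(s)$ on vertical strips --- while avoiding the resonances, which may cluster towards $\Re(s)=1/2$ --- and simultaneously in $\Im(z)$ requires the quantitative spectral theory of $M$ (truncated-resolvent / Maa{\ss}--Selberg estimates, as in \cite{petridis-risager, iwaniec}) rather than soft arguments.
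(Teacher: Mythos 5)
Your proposal is correct and follows essentially the same route the paper takes: the paper's own proof of Theorem \ref{summary-D's} simply cites \cite{petridis-risager} and \cite{risagerthesis} for parts (1)--(3) and notes that (4) follows by differentiating (\ref{functional-equation-E}), and the arguments in those references are precisely what you spell out --- differentiate the eigenvalue equation for $U(-\underline\e)E(z,s,\underline\e)$ via Leibniz (using that $L(\underline\e)$ is quadratic in $\underline\e$), apply the resolvent to the compactly supported right-hand side, and induct on $k$ for the pole orders and growth bounds. The only point worth tightening is the step where you ``solve for $D$ by applying $R(s)$'': one should first check from the series (\ref{d-alphaseries}) that $D(z,s,\underline\alpha)\in L^2$ for $\Re(s)>1$ (the identity coset vanishes high in the cusp by compact support of the $\alpha_l$), so that the $L^2$-uniqueness of the solution of the inhomogeneous equation identifies the series with $-R(s)(\cdots)$ before continuing.
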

\begin{proof}[Proof:] (\ref{itscold}), (\ref{recursion})
  and(\ref{itsstillcold}) can be found in \cite{petridis-risager}, and
  (\ref{functional-equation}) follows from differentiation
  (\ref{functional-equation-E}). See also \cite{risagerthesis}.
\end{proof}
\begin{remark}\label{multivariate-functional-eq}
 An example of the functional equation in Theorem \ref{summary-D's}
 (\ref{functional-equation}) is 
\begin{align*}D(z,s,\alpha_1,\alpha_2)&=\phi(s,\underline
  0)D(z,1-s,\alpha_1,\alpha_2)\\
&\quad +\left.\partial_{\e_1}\phi(s,\underline\e)\right\vert_{\underline \e=\underline 0}D(z,1-s,\alpha_2)+\left.\partial_{\e_2}\phi(s,\underline
\e)\right\vert_{\underline \e=\underline 0}D(z,1-s,\alpha_1)\\ &\quad +\left.\partial_{\e_2,\e_1}\phi(s,\underline
\e)\right\vert_{\underline \e=\underline 0}E(z,1-s).\end{align*}
\end{remark}

\begin{remark} We notice that, although Theorem \ref{summary-D's}
  concerns $D(z,s,\underline \alpha)$, where
  $\alpha=(\alpha_1,\ldots,\alpha_l)$ with $\alpha_l$ compactly
  supported, we can also handle non-compact but cuspidal cohomology in
  the following way: Since Theorem \ref{summary-D's} immediately
  gives --through (\ref{original-petridis-obs}) --  the
  properties of $\left.\frac{\partial^k}{\partial
  \e_1\cdots\partial\e_k}E(z,s,\underline \e)\right\vert_{\underline
\e=\underline 0}$,  which is invariant under shift of $\alpha_l$ within
its cohomology class. Since for  every $f(z)\in S_{2}(\Gamma)$
the harmonic 1-form $\Re(f(z)dz)$ has a compactly supported form in
its cohomology class, we see that Theorem \ref{summary-D's} provides the
analytic properties of the series 
\begin{equation}\label{d-n-mseries}
D^{n_1,\ldots , n_k}(z, s, \omega_1, \ldots , \omega_k)=\sum_{\gamma\in \GinfmodG}\prod_{l=1}^k \left(\int_{i\infty}^{\gamma z}\omega_l\right)^{n_l}
 \Im (\gamma z)^s,
\end{equation}
where $\omega_i$, $i=1, \ldots, k$, are complex or real harmonic
cuspidal 1-forms.

We note also that the  \lq differentiated scattering matrices\rq{}  $$\left.\partial_{\e_k,\ldots,\e_1}\phi(s,\underline
\e)\right\vert_{\underline \e=\underline 0}$$ are  invariant under shift of $\alpha_l$ within
its cohomology class.  We shall freely use these connections below. 
\end{remark}
\begin{remark} \label{odd-implies-zero}It is well known (see e.g. \cite[page 218, Remark 61]{hejhal}) that in the
  one-cusp case the scattering matrix is \emph{even} in the character
  (i.e.  $\phi(s,\chi)=\phi(s,\overline{\chi})$). It follows that $$\left.\partial_{\e_k,\ldots,\e_1}\phi(s,\underline
\e)\right\vert_{\underline \e=\underline 0}=0$$ whenever $k$ is
odd. 
\end{remark}

\subsection{Dissolving and special values of Dirichlet series}
 By Theorem \ref{maintheorem-for-characters} the Phillips-Sarnak
condition for the perturbation induced by $\omega$ is equivalent
to$$\res_{s=s_j}D^1(z, s, \omega)\ne 0.$$

The following lemma identifies situations where the Phillip-Sarnak condition
is not satisfied. This is seen as follows:  
\begin{lemma}\label{poweron}
Let $M$ be a finite volume hyperbolic surface of genus $g$.  Let
$\lambda$ be an eigenvalue $>1/4$ of multiplicity $m$. If $g>2m$, there exists a holomorphic cusp form $f(z)$ of weight $2$ such that 
for both perturbations induced by the harmonic 1-forms $\omega_1=\Re (f(z)\, dz)$ and
$\omega_2 =\Im (f(z)\, dz)$ as in
(\ref{muffinstogo}) the Phillips--Sarnak condition for dissolving the
eigenvalue $\lambda$ is \emph{not} satisfied, i.e.
\begin{equation}\label{2phillips-sarnak}
\res_{s=s_j}D^1(z, s, \omega_i)=0, \quad i=1, 2.\end{equation}
\end{lemma}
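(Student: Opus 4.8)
The plan is to exploit the fact that the Phillips--Sarnak residue is a linear functional on the space of holomorphic weight $2$ cusp forms, and then use a dimension count to find a nonzero form in the intersection of kernels. Recall from Remark after Theorem~\ref{maintheorem-for-characters} and the discussion around \eqref{Dn-residue} that, for a harmonic cuspidal $1$-form $\omega$,
\begin{equation*}
\res_{s=s_j}D^1(z,s,\omega)=\frac{1}{2s_j-1}\sum_{i=1}^m\inprod{L^{(1)}u_{j,i}}{E(z,s_j)}\,u_{j,i}\,,
\end{equation*}
where $L^{(1)}=L^{(1)}(\omega)$ depends \emph{linearly} on $\omega$ (see \eqref{firstLvariation}: $L^{(1)}h=4\pi i\modsym{dh}{\omega}-2\pi i\,\delta(\omega)h$, and $\delta(\omega)=0$ for harmonic $\omega$). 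Hence the map
\begin{equation*}
\omega\longmapsto \Big(\inprod{L^{(1)}(\omega)u_{j,i}}{E(z,s_j)}\Big)_{i=1}^{m}\in\C^{m}
\end{equation*}
is $\R$-linear on the real vector space of harmonic cuspidal $1$-forms, and \eqref{2phillips-sarnak} for a given $f$ is precisely the vanishing of this vector both for $\omega_1=\Re(f\,dz)$ and for $\omega_2=\Im(f\,dz)$.

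First I would set up the linear-algebra framework. The space of holomorphic weight $2$ cusp forms $S_2(\Gamma)$ has $\dim_{\C}S_2(\Gamma)=g$, so the space of harmonic cuspidal $1$-forms has real dimension $2g$; indeed $\{\Re(f\,dz),\Im(f\,dz):f\in S_2(\Gamma)\}$ spans a $2g$-dimensional real space $\mathcal H$. Define the $\R$-linear map $T:\mathcal H\to\C^{m}\cong\R^{2m}$ by $T(\omega)=\big(\inprod{L^{(1)}(\omega)u_{j,i}}{E(z,s_j)}\big)_i$. Since $\dim_{\R}\mathcal H=2g>4m\geq 2m=\dim_{\R}\C^m$, the kernel of $T$ has real dimension at least $2g-2m$. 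The point is that I want a kernel element of the special form coming from a \emph{single} $f$ through \emph{both} its real and imaginary parts; a generic element of $\ker T$ is a real-linear combination $\Re(f_1\,dz)+\Im(f_2\,dz)$ and need not have this shape.

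The key step, and the main obstacle, is to produce a genuine $f\in S_2(\Gamma)$, $f\neq 0$, with $\Re(f\,dz),\Im(f\,dz)\in\ker T$ simultaneously. To do this I would observe that $\mathcal H$ carries a natural complex structure $J$ (the Hodge star, which sends $\Re(f\,dz)\mapsto\Im(f\,dz)$ up to sign, corresponding to multiplication by $i$ on $S_2(\Gamma)$), and that the desired condition on $f$ is exactly that the complex line $\C\cdot f\subset S_2(\Gamma)$, viewed as a $J$-invariant real $2$-plane in $\mathcal H$, lies inside $\ker T$. Equivalently, I need $\ker T$ to contain a $J$-invariant $2$-plane. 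Now $\ker T\cap J(\ker T)$ is the largest $J$-invariant subspace of $\ker T$, and a standard estimate gives $\dim_{\R}\big(\ker T\cap J(\ker T)\big)\geq 2\dim_{\R}\ker T-\dim_{\R}\mathcal H\geq 2(2g-2m)-2g=2g-4m$. By the hypothesis $g>2m$ this is $\geq 2g-4m\geq 2$, so $\ker T\cap J(\ker T)$ is a nonzero $J$-invariant subspace; picking any nonzero vector in it and using $J$-invariance yields a complex line $\C\cdot f\subset S_2(\Gamma)$ with $f\neq 0$ and both $\Re(f\,dz),\Im(f\,dz)\in\ker T$. Translating back through the identification $T(\omega_i)=0\iff\res_{s=s_j}D^1(z,s,\omega_i)=0$ (valid since $\{u_{j,i}\}$ is linearly independent and $2s_j-1\neq 0$) gives \eqref{2phillips-sarnak} and completes the proof. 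The one subtlety to check carefully is that $J$ on $\mathcal H$ really is an isometry/complex structure compatible with the identification with $S_2(\Gamma)$, and that shifting $\omega_i$ within its cohomology class to a compactly supported representative (needed to apply \eqref{firstLvariation}) does not change the residue --- both facts are available from the remarks following Theorem~\ref{summary-D's}.
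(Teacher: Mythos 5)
Your proof is correct and is essentially the paper's argument: both reduce the claim to a rank--nullity count on the real $2g$-dimensional space $S_2(\Gamma)$ against $4m$ real linear conditions, using the $\R$-linearity of $\omega\mapsto L^{(1)}(\omega)$ together with the residue formula (\ref{Dn-residue}). The only organizational difference is that the paper defines a single map $\Lambda:S_2(\Gamma)\to\R^{4m}$ recording the conditions for $\Re(f\,dz)$ and $\Im(f\,dz)$ simultaneously, so that a nonzero kernel element is automatically a single $f$ of the required form, whereas you first bound the kernel of the one-form map $T$ and then intersect with its image under the complex structure $J$ --- the two computations yield the identical threshold $2g-4m>0$ (and the paper's remark after the lemma makes the same observation that the kernel is $J$-invariant of complex dimension at least $g-2m$).
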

Note that the condition (\ref{2phillips-sarnak}) implies that
$\res_{s=s_j}D^1(z, s, \omega)=0$ for all $\omega$ in the linear
complex span
of $\omega_1$, $\omega_2$, in particular for $f(z)dz$. 
\begin{proof} We have $M=\GmodH$ for some discrete cofinite subgroup
  $\G$. Let $(u_l)_{l=1}^m$ be a basis for  the eigenspace corresponding to the
  eigenvalue $\lambda$. 
Consider the linear real map $\Lambda:S_2(\Gamma)\to \R^{4m}$ 
 sending $f\in
S_2(\Gamma)$ to $$\left(\Re\inprod{\res_{s=s_j}D^{(1)}(z,s,\w_i)}{u_l},\Im
\inprod{\res_{s=s_j}D^{(1)}(z,s,\w_i)}{u_l}\right)_{\substack{i=1,2\\l=1,\ldots
  m}}, $$ which is
well-defined by Theorem \ref{summary-D's} (\ref{recursion}).
Since $\dim_\R S_2(\Gamma)=2g$, the dimension formula implies that
$\Lambda$ has non-trivial kernel when $2g-4m>0$
which gives the required result.
\end{proof}
\begin{remark}  There are numerically many known
examples of surfaces of genus $g>2$ that has simple eigenvalues. For
these Theorem \ref{poweron} can be applied.
Moreover the proof of the lemma shows that the real dimension of the
relevant $f$'s is at least $2g-4m$. Note however that the kernel of
$\Lambda$ is in fact a complex space since $\omega_1(if)=-\omega_2(f)$
and $\omega_2(if)=\omega_1(f)$. Hence this space must have complex
dimension at least $g-2m$
\end{remark}
We now introduce a Dirichlet series that plays a major role in investigating
movement of an embedded eigenvalue if the
Phillips-Sarnak condition is not satisfied. Let $f(z)=\sum_{n=1}^{\infty}a_ne^{2\pi i nz}$
be the Fourier expansion of $f(z)$ at the cusp $i\infty$.
Let $$u_j(z)=\sum_{n\ne 0}b_n \sqrt{y}K_{s_j-1/2}(2\pi \abs{n} y)e^{2\pi i nx}$$
be the Fourier expansion of $u_j$, which for simplicity we may assume
to be real-valued. We introduce the antiderivative of $f(z)$ as
$$F(z)=\int_{i\infty}^zf(w)\, dw=\sum_{n=1}^{\infty}\frac{a_n}{2\pi i n}e^{2\pi i nz}.$$
We define the Dirichlet series
\begin{equation}\label{Dirichlet series}
L(u_j\otimes F^2, s)=\sum_{n=1}^{\infty} \left(  \sum_{k_1+k_2=n}\frac{a_{k_1}}{k_1}\frac{a_{k_2}}{k_2}             b_{-n}    \right)\frac{1 }{n^{s-1/2}}.
\end{equation}
Since $a_n, b_n$ grow at most polynomially in $n$ we easily see that the above series converges absolutely for $\Re(s)$ sufficiently large.

By unfolding and inserting the relevant Fourier expansions we have, for $\Re(s)$ sufficiently large,
\begin{equation}\label{eq:unfolding-series}
\begin{split}&\inprod{D^2(z,s,f(z)dz)}{u_j}=\int_0^\infty\int_0^1\left(\int_{i\infty}^z
    f(w)dw\right)^2y^s u_j(z)dx\frac{dy}{y^2}\\
 &\qquad =\int_0^\infty\int_0^1\left(\sum_{n=1}^\infty \frac{a_n}{2\pi i n}e(nz)\right)^2y^s\sum_{n\ne 0}b_n \sqrt{y}K_{s_j-1/2}(2\pi \abs{n} y)e(nx)
 dx\frac{dy}{y^2}\\
&\qquad =\frac{-1}{4\pi^2}\sum_{n=1}^\infty\int_0^\infty \left(\sum_{k_1+k_2=n}\frac{a_{k_1}}{k_1}\frac{a_{k_2}}{k_2}\right)
b_{-n} e^{-2\pi n y}y^{s-1/2}K_{s_j-1/2}(2\pi n y)\frac{dy}{y}\\
&\qquad =\frac{-1}{4\pi^2}\sum_{n=1}^\infty\left(\sum_{k_1+k_2=n}\frac{a_{k_1}}{k_1}\frac{a_{k_2}}{k_2}\right)
b_{-n}\frac{1}{(2\pi n )^{s-1/2}}\int_0^\infty
e^{-t}t^{s-1/2}K_{s_j-1/2}(t)\frac{dt}{t}\\
&\qquad =\frac{- 1}{2^{2s+1}\pi^{s+1}}L(u_j\otimes F^2, s)\frac{\Gamma(s+s_j-1)\Gamma(s-s_j)}{\Gamma(s)},
\end{split}
\end{equation}
where we have used \cite[6.621 3]{gradshtejn-ryzhik}. Using this we can now prove the basic properties of $L(u_j\otimes F^2,
s)$.

\begin{proposition} \label{prop:the-specific-L-f}The series $L(u_j\otimes F^2,s)$ admits
  meromorphic continuation to $s\in \mathbb C$ with possible poles on
  the singular set. The poles are at most of first order.
 Furthermore
   we have the following functional
   equation: Let $$\Lambda(u_j\otimes F^2,s)=\frac{1}{(4\pi)^s}\frac{\Gamma(s+s_j-1)\Gamma(s-s_j)}{\Gamma(s)}L(u_j\otimes F^2,s).$$
Then
\begin{equation*}
  \Lambda(u_j\otimes F^2,s)=\phi(s)\Lambda(u_j\otimes F^2,1-s)
\end{equation*}  
where $\phi(s)$ is the scattering matrix.
\end{proposition}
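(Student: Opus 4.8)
The plan is to derive everything from the unfolding identity \eqref{eq:unfolding-series}, which expresses $L(u_j\otimes F^2,s)$ in terms of the inner product $\inprod{D^2(z,s,f(z)dz)}{u_j}$ up to an explicit factor of Gamma functions and exponentials. Concretely, rearranging that identity gives
\begin{equation*}
L(u_j\otimes F^2,s)=-2^{2s+1}\pi^{s+1}\,\frac{\Gamma(s)}{\Gamma(s+s_j-1)\Gamma(s-s_j)}\inprod{D^2(z,s,f(z)dz)}{u_j}
\end{equation*}
for $\Re(s)$ large. The right-hand side is the object whose analytic behaviour I want to control, since $D^2(z,s,f(z)dz)$ is a special case (with $k=2$ and $\omega_1=\omega_2=f(z)dz$, or rather the compactly supported representative) of the series $D(z,s,\underline\alpha)$ studied in Theorem \ref{summary-D's}.

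First I would establish the meromorphic continuation. By Theorem \ref{summary-D's}\eqref{itscold} the function $D(z,s,f(z)dz)$ — understood via the cohomological shift, see the remark around \eqref{d-n-mseries} — continues meromorphically to $\C$ with poles contained in the singular set and of order at most $2$. Pairing with the fixed $L^2$ cusp form $u_j$ preserves meromorphy, so $\inprod{D^2(z,s,f(z)dz)}{u_j}$ is meromorphic with poles in the singular set of order at most $2$; here one uses part \eqref{itsstillcold} (polynomial growth in $\Im(s)$ on vertical strips) together with the rapid decay of $u_j$ to justify that the pairing integral converges and inherits these properties. Multiplying by $\Gamma(s)/(\Gamma(s+s_j-1)\Gamma(s-s_j))$ and the entire factor $2^{2s+1}\pi^{s+1}$ gives the meromorphic continuation of $L(u_j\otimes F^2,s)$. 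For the pole order, the key point is that the factor $\Gamma(s)/(\Gamma(s+s_j-1)\Gamma(s-s_j))$ is \emph{holomorphic} at every point of the singular set off the real axis (the poles of $\Gamma(s+s_j-1)$ and $\Gamma(s-s_j)$ lie at $s=1-s_j-k$ and $s=s_j-k$, $k\ge 0$, which are off the critical line but can only improve, not worsen, the pole order of $L$), so a priori $L(u_j\otimes F^2,s)$ has poles of order at most $2$. To cut this down to order at most $1$ I would invoke the dissolving machinery: by Theorem \ref{maintheorem-for-characters} applied to the perturbation by $\omega=f(z)dz$, if the Phillips--Sarnak condition already holds then $D^{(1)}$ has a pole and $D^{(2)}$ has at most a simple pole at $s_j$; and in the multiparameter language the leading ($k=2$) polar part of $D(z,s,f(z)dz)$ at a cuspidal point is governed by $\res D^{(1)}$, so the genuine double pole is absent. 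Alternatively, and more cleanly, I would argue directly from the functional equation below, which forces the pole order on the critical line to be symmetric and hence bounds it using the known simple-pole structure of one side.

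Next, the functional equation. Apply Theorem \ref{summary-D's}\eqref{functional-equation} with $\underline\alpha=(f(z)dz,f(z)dz)$; the example in Remark \ref{multivariate-functional-eq} specializes to
\begin{equation*}
D^2(z,s,f(z)dz)=\phi(s)D^2(z,1-s,f(z)dz)+2\,\phi^{(1)}(s)D^1(z,1-s,f(z)dz)+\phi^{(2)}(s)E(z,1-s),
\end{equation*}
where $\phi^{(i)}(s)=\partial_{\e}^{i}\phi(s,\e)|_{\e=0}$ for the one-parameter character attached to $f(z)dz$. Now I pair both sides against $u_j$. Since $u_j$ is a cusp form it is orthogonal to $E(z,1-s)$, killing the last term; and — this is the crucial structural input — in the one-cusp case the scattering function is even in the character (Remark \ref{odd-implies-zero}), so $\phi^{(1)}(s)\equiv 0$, killing the middle term. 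Hence $\inprod{D^2(z,s,f(z)dz)}{u_j}=\phi(s)\inprod{D^2(z,1-s,f(z)dz)}{u_j}$. Translating through the Gamma-factor normalization and using the reflection/duplication manipulations that turn the factor $\Gamma(s)/(\Gamma(s+s_j-1)\Gamma(s-s_j))$ together with $(4\pi)^{-s}$ versus $(4\pi)^{-(1-s)}$ into the symmetric completed object $\Lambda(u_j\otimes F^2,s)=(4\pi)^{-s}\frac{\Gamma(s+s_j-1)\Gamma(s-s_j)}{\Gamma(s)}L(u_j\otimes F^2,s)$, one obtains $\Lambda(u_j\otimes F^2,s)=\phi(s)\Lambda(u_j\otimes F^2,1-s)$; here one checks the elementary identity $\frac{\Gamma(1-s+s_j-1)\Gamma(1-s-s_j)}{\Gamma(1-s)}\big/\frac{\Gamma(s+s_j-1)\Gamma(s-s_j)}{\Gamma(s)}$ combines with the powers of $2$ and $\pi$ from \eqref{eq:unfolding-series} consistently — this is a routine but slightly fiddly Gamma-function bookkeeping step.

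The main obstacle I anticipate is precisely this last bookkeeping: matching the exact constants and Gamma-factor ratios coming out of \eqref{eq:unfolding-series} on both sides $s\leftrightarrow 1-s$ so that the asserted completed functional equation with \emph{exactly} the factor $(4\pi)^{-s}\frac{\Gamma(s+s_j-1)\Gamma(s-s_j)}{\Gamma(s)}$ and with $\phi(s)$ (not $\phi(s)$ times extraneous elementary factors) comes out. One has to use $s_j=1/2+ir_j$, so that $1-s_j=1/2-ir_j=\overline{s_j}$, which is what makes the pair $\{s_j-1/2,\,1/2-s_j\}$ symmetric and lets the Gamma quotient be invariant under $s\mapsto 1-s$ up to the clean factors; and one must be careful that the Eisenstein series and scattering function for the \emph{twisted} problem $(\Gamma,\overline\chi(\cdot,0))=(\Gamma,\mathbf 1)$ at $\e=0$ are the usual ones, so $\phi(s)=\phi(s,\underline 0)$ really is the scattering matrix of $M$. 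A secondary point requiring a line of justification is the absolute convergence of the unfolded integral in \eqref{eq:unfolding-series} near $y\to 0$, which is what forces $\Re(s)$ large in the first place and which legitimizes the termwise integration producing the Gamma factors; this is standard given the exponential decay $e^{-2\pi ny}$ furnished by $F(z)^2$ and the $K$-Bessel asymptotics.
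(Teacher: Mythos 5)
Your meromorphic continuation and your functional equation follow the paper's own route: pair the functional equation of Theorem \ref{summary-D's} (\ref{functional-equation}) against $u_j$, kill the $\phi^{(1)}$-term by Remark \ref{odd-implies-zero} and the $\phi^{(2)}(s)E(z,1-s)$-term by cuspidality, and translate via (\ref{eq:unfolding-series}). The ``fiddly Gamma bookkeeping'' you anticipate is actually vacuous: since $2^{2s+1}\pi^{s+1}=2\pi(4\pi)^s$, identity (\ref{eq:unfolding-series}) says precisely that $\inprod{D^2(z,s,f(z)dz)}{u_j}=-\tfrac{1}{2\pi}\Lambda(u_j\otimes F^2,s)$, so the paired functional equation \emph{is} the completed functional equation, with no reflection or duplication formulas needed.

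The genuine gap is in the claim that the poles are at most simple; none of your three suggested routes works. First, your assertion that the poles of $\Gamma(s+s_j-1)$ and $\Gamma(s-s_j)$ lie off the critical line is false for $k=0$: they sit at $s=1-s_j$ and $s=s_j$, both on $\Re(s)=1/2$. This is not a harmless slip --- the simple zero of $1/\Gamma(s-s_j)$ at $s_j$ is exactly the mechanism that lowers the pole order of $L$ there. Second, Theorem \ref{maintheorem-for-characters} only controls $D^{(2)}$ under the hypothesis that $D^{(1)}$ is \emph{regular} at $s_j$; when the Phillips--Sarnak condition holds, $D^{(2)}$ can genuinely have a double pole, so that route fails. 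Third, symmetry of pole orders under $s\mapsto 1-s$ cannot by itself reduce an order. The argument you are missing (the paper's) is to use Theorem \ref{summary-D's} (\ref{recursion}) to write $D^{2}(z,s,f(z)dz)=-R(s)\psi(z,s)$ with $\psi$ having at most \emph{simple} poles on the singular set, and then transfer the resolvent onto $u_j$ by self-adjointness:
\begin{equation*}
\inprod{D^2(z,s,f(z)dz)}{u_j}=-\inprod{\psi(z,s)}{R(\overline{s})u_j}=\frac{-1}{s(1-s)-\lambda_j}\inprod{\psi(z,s)}{u_j}.
\end{equation*}
Hence the inner product has at most a double pole at $s_j$ and $1-s_j$ and at most a simple pole at every other singular point; dividing by $\Gamma(s+s_j-1)\Gamma(s-s_j)$, whose simple poles occur exactly at $1-s_j$ and $s_j$, then gives at most simple poles for $L(u_j\otimes F^2,s)$ everywhere. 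Without this step you only know the inner product has pole order $\le 2$ at \emph{every} singular point, which does not yield the stated bound at resonances away from $s_j$ and $1-s_j$.
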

\begin{proof} This follows from (\ref{eq:unfolding-series}) and the
  properties of $D^2(z,s,f(z)dz)$ as recorded in Theorem \ref{summary-D's}:   Since the left hand side of
  (\ref{eq:unfolding-series}) is meromorphic for $s\in \C$
this
  immediately gives meromorphic continuation of  $L(u_j\otimes F^2,s)$
  to $s\in \C$. Since 
$$D^{2}(z,s,f(z)dz)=D^{2}(z,s,\omega_1)-D^{2}(z,s,\omega_2)+2iD^{1,1}(z,s,\omega_1,\omega_2),$$
we have by Theorem \ref{summary-D's} that
$D^{2}(z,s,f(z)dz)=-R(s)(\psi(z,s))$ where $\psi(z,s)$ has at most a
simple pole on the singular set. But then 
$$\inprod{D^2(z,s,f(z)dz)}{u_j}=\inprod{\psi(z,s)}{R(\overline{s})u_j}=\frac{1}{s(1-s)-\lambda_j}\inprod{\psi(z,s)}{u_j}$$
which then holds for $s\in \C$ by meromorphic continuation. Comparing
with (\ref{eq:unfolding-series})  and noticing that $\Gamma(s-s_j)$
has a simple pole at $s=s_j$ we prove that the poles of $L(u_j\otimes
F^2,s)$ are all at most simple.

 The functional equation in Theorem \ref{summary-D's}  (\ref{functional-equation}),
   reduces, in this case, to $$D^{2}(z,s,f(z)dz)=\phi(s)D^{2}(z,1-s,f(z)dz)+\phi^{(2)}(s,f(z)dz)E(z,1-s).$$
The fact that $\inprod{E(z,s)}{u_j}=0$ and Equation  (\ref{eq:unfolding-series}) give
\begin{gather}\begin{split}\frac{1}{4^{s}\pi^{s}}&L(u_j\otimes
    F^2, s)\frac{\Gamma(s+s_j-1)\Gamma(s-s_j)}{\Gamma(s)}\\
&=\phi(s)[\textrm{same expression evaluated at $1-s$}].\\  
\end{split}\end{gather}

\end{proof}
We note that in the case of multiple cusps the above functional
equation becomes more complicated since in general 
$\phi^{(1)}(s,f(z)dz)$ can be  a non-zero matrix (with diagonal
entries equal to zero). 
\begin{lemma}
Assume  (\ref{2phillips-sarnak}), i.e.  that the Phillips-Sarnak
condition is not satisfied for the  perturbations induced by both $\omega_i$,
$i=1, 2$. Then $L(u_j\otimes F^2,s)$ is regular at $s=s_j$.
\end{lemma}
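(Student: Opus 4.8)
The plan is to exploit the recursive structure of $D^2(z,s,f(z)dz)$ given by Theorem \ref{summary-D's} (\ref{recursion}) together with the hypothesis (\ref{2phillips-sarnak}), which controls the lower-order objects $D^{(1)}(z,s,\omega_i)$ at $s_j$. Concretely, by Theorem \ref{summary-D's} one writes $D^2(z,s,f(z)dz)=-R(s)\psi(z,s)$ where $\psi(z,s)$ is built from $L^{(k)}$ applied to the lower-order series $D^{(1)}(z,s,\omega_i)$ and $E(z,s)$, exactly as in the identity $D^{2}(z,s,f(z)dz)=D^{2}(z,s,\omega_1)-D^{2}(z,s,\omega_2)+2iD^{1,1}(z,s,\omega_1,\omega_2)$ used in the proof of Proposition \ref{prop:the-specific-L-f}. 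A priori $\psi(z,s)$ can have a simple pole at $s_j$, and pairing with $u_j$ (which produces the resolvent factor $1/(s(1-s)-\lambda_j)$) could in principle create a pole of $L(u_j\otimes F^2,s)$ at $s_j$. So the content of the lemma is that the residue of $\psi(z,s)$ at $s_j$, paired against $u_j$, vanishes.

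First I would recall that $D^{(1)}(z,s,\omega_i)$ is regular at $s_j$ for $i=1,2$: indeed, by the residue formula $\res_{s=s_j}D^{(1)}(z,s,\omega_i)$ is (up to a nonzero constant) $\inprod{L^{(1)}_i E(z,s_j)}{u_j}\,u_j(z)$ (the $n=1$ case of (\ref{Dn-residue}), or the cited formula $\res_{s=s_j}D^{(1)}(z,s)=\inprod{\Delta^{(1)}u_j}{E(z,s_j)}u_j(z)$), and (\ref{2phillips-sarnak}) says this vanishes. Hence each $D^{(1)}(z,s,\omega_i)$ is holomorphic near $s_j$. Next I would examine $\psi(z,s)$ term by term: the terms involving $\partial_{\e_l}L$ applied to $D^{(1)}(z,s,\omega_l)$ are then holomorphic near $s_j$ (since $L^{(k)}$ is a fixed compactly supported differential operator and $D^{(1)}(z,s,\omega_l)$ is holomorphic there); the only remaining term is the $\partial_{\e_1,\e_2}L$-term applied to $E(z,s)$, which is $-8\pi^2\modsym{\cdot}{\cdot}$ type and hence also holomorphic at $s_j$ because $E(z,s)$ is holomorphic at the cuspidal point $s_j$ (the poles of the Eisenstein series lie in $\Re(s)<1/2$ or in $(1/2,1]$, never at $1/2+ir_j$ with $r_j\neq 0$). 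Therefore $\psi(z,s)$ is actually holomorphic at $s_j$.

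Once $\psi(z,s)$ is holomorphic at $s_j$, I would plug this back into the chain of equalities in (\ref{eq:unfolding-series}) and the computation in the proof of Proposition \ref{prop:the-specific-L-f}: there we have $\inprod{D^2(z,s,f(z)dz)}{u_j}=\frac{1}{s(1-s)-\lambda_j}\inprod{\psi(z,s)}{u_j}$. The factor $1/(s(1-s)-\lambda_j)$ has a simple pole at $s_j$, but $\inprod{\psi(z,s)}{u_j}$ is now holomorphic at $s_j$; what remains is to check its value there. The key point is that $\psi(z,s_j)$ is orthogonal to $u_j$: by construction $\psi(z,s)$ is a linear combination of $L^{(k)}D^{(\text{lower})}$ and $L^{(1,2)}E$, and at $s=s_j$ integration by parts (the $L^{(k)}$ are self-adjoint and compactly supported, as used repeatedly above) moves the operators onto $u_j$, yielding inner products of the lower-order Eisenstein-type functions against $\overline{L^{(k)}u_j}=\overline{\Delta^{(1)}_k u_j}$, each of which is controlled by exactly the Phillips--Sarnak-type pairings that (\ref{2phillips-sarnak}) kills; alternatively, since $\inprod{D^2(z,s,f(z)dz)}{u_j}$ equals the left side of (\ref{eq:unfolding-series}) — a constant times $L(u_j\otimes F^2,s)\,\Gamma(s+s_j-1)\Gamma(s-s_j)/\Gamma(s)$ — and the Gamma-factor $\Gamma(s-s_j)$ already contributes the only pole at $s_j$ coming from the archimedean side, regularity of $\inprod{\psi}{u_j}$ at $s_j$ forces $L(u_j\otimes F^2,s)$ to be regular at $s_j$ (the simple pole of $1/(s(1-s)-\lambda_j)$ is exactly matched against the simple pole of $\Gamma(s-s_j)$, leaving no pole for the Dirichlet series itself).

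The main obstacle I expect is the bookkeeping in identifying precisely which lower-order pieces appear in $\psi(z,s)$ and verifying that each is holomorphic at $s_j$ under hypothesis (\ref{2phillips-sarnak}) — in particular making sure that the $\modsym{\alpha_l}{\alpha_m}$-type zeroth-order terms in $L(\underline\e)$ do not secretly reintroduce a pole — together with the clean matching of the resolvent pole $1/(s(1-s)-\lambda_j)$ against the Gamma-factor $\Gamma(s-s_j)$ in (\ref{eq:unfolding-series}) so that no net pole survives. Everything else is a direct consequence of Theorem \ref{summary-D's}, the residue formula (\ref{Dn-residue}), and the unfolding identity already established.
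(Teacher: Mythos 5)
Your ``alternatively'' branch is exactly the paper's proof: $D^{2}(z,s,f(z)dz)=-R(s)\psi(z,s)$ with $\psi$ regular at $s_j$ because hypothesis (\ref{2phillips-sarnak}) kills the residues of the only potentially singular inputs $D^{1}(z,s,\w_i)$; hence $\inprod{D^2(z,s,f(z)dz)}{u_j}=\frac{1}{s(1-s)-\lambda_j}\inprod{\psi(z,s)}{u_j}$ has at most a simple pole at $s_j$, which is absorbed by the simple pole of $\Gamma(s-s_j)$ in (\ref{eq:unfolding-series}), so $L(u_j\otimes F^2,s)$ is regular there. That part is complete and correct.

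However, your stated ``key point'' --- that $\psi(z,s_j)$ is orthogonal to $u_j$ --- is wrong and you should delete it. By (\ref{Dn-residue}) with $n=2$ (and $m=1$), $\res_{s=s_j}D^{(2)}(z,s)$ is precisely $\frac{1}{2s_j-1}\inprod{\psi(z,s_j)}{u_j}\,u_j$, so the orthogonality you assert is \emph{equivalent} to the vanishing of the second-order Fermi residue. That is exactly the quantity the lemma is setting up to study, and it is not implied by the first-order condition (\ref{2phillips-sarnak}): the integration by parts you sketch produces pairings like $\inprod{D^{1}(z,s_j,\w_i)}{L^{(1)}u_j}$, which are genuinely second-order objects, not the Phillips--Sarnak pairings that (\ref{2phillips-sarnak}) annihilates. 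If your orthogonality claim were true, $\inprod{D^2(z,s,f(z)dz)}{u_j}$ would be regular at $s_j$ and (\ref{eq:unfolding-series}) would force $L(u_j\otimes F^2,s_j)=0$ always, rendering the nonvanishing hypothesis of Theorem \ref{dirichlet-series-condition} vacuous. The lemma only needs (and only gets) ``at most a simple pole'' for the inner product, matched against the pole of $\Gamma(s-s_j)$; no cancellation of the resolvent pole is required or available.
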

\begin{proof} As in the proof of Proposition~\ref{prop:the-specific-L-f}
  we have that $D^{2}(z,s,f(z)dz)=-R(s)(\psi(z,s))$ where $\psi(z,s)$
  has at most a simple pole at $s_j$. But assuming
  (\ref{2phillips-sarnak}), it follows easily from Theorem
  \ref{summary-D's} (\ref{recursion}), that $\psi(z,s)$ is in fact
  regular since the only potential poles would come from $D^1(z,s,\w_i)$. Therefore, as in the proof of
  Proposition~\ref{prop:the-specific-L-f}, we conclude that
  $\inprod{D^2(z,s,f(z)dz)}{u_j}$ has at most a simple pole at
  $s_j$. Comparing with (\ref{eq:unfolding-series})  and again using that $\Gamma(s-s_j)$
has a simple pole at $s=s_j$ gives the claim.
\end{proof}

\begin{theorem}\label{dirichlet-series-condition}
Assume  (\ref{2phillips-sarnak}), i.e. that the Phillips-Sarnak
condition is not satisfied under perturbations induced by both $\omega_i$, $i=1, 2$, and   that $L(u_j\otimes F^2, s_j)\ne 0$.
For all directions $\w$ in the  real span of
$\w_1,\w_2$  with at most two exceptions we have $$\Re\hat{s}_j^{(4)}(0,
\w)\ne 0.$$ In particular there exists a cusp form with eigenvalue $s_j(1-s_j)$ that
is dissolved in this direction.

\end{theorem}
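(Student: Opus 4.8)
The plan is to combine Theorem~\ref{maintheorem-for-characters} (the higher-order Fermi rule in the character setting, applied with $n=2$) with the unfolding identity~(\ref{eq:unfolding-series}) relating $\inprod{D^2(z,s,f(z)dz)}{u_j}$ to the special value $L(u_j\otimes F^2,s_j)$. Under hypothesis~(\ref{2phillips-sarnak}) the functions $D^{(1)}(z,s,\w_i)$ are regular at $s_j$ for $i=1,2$, so by linearity $D^{(1)}(z,s,\w)$ is regular at $s_j$ for every $\w$ in the real span of $\w_1,\w_2$; thus the $n=1$ hypothesis of Theorem~\ref{maintheorem-for-characters} is satisfied for all such directions. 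The theorem then tells us that $D^{(2)}(z,s,\w)$ has at most a simple pole at $s_j$, and that
\begin{equation*}
\Re\hat s_j^{(4)}(0,\w)=-\frac{1}{2m}\binom{4}{2}\norm{\res_{s=s_j}D^{(2)}(z,s,\w)}^2=-\frac{3}{m}\norm{\res_{s=s_j}D^{(2)}(z,s,\w)}^2,
\end{equation*}
so the claim $\Re\hat s_j^{(4)}(0,\w)\neq0$ is equivalent to $\res_{s=s_j}D^{(2)}(z,s,\w)\neq0$. Hence everything reduces to showing that the residue $\res_{s=s_j}D^{(2)}(z,s,\w)$ is nonzero for all $\w$ in the real span of $\w_1,\w_2$ with at most two exceptions.

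The key step is to identify this residue as a quadratic form in $\w$ and to locate its zero set. Writing $\w=c_1\w_1+c_2\w_2$ with $(c_1,c_2)\in\R^2$, and using the expansion (as in the proof of Proposition~\ref{prop:the-specific-L-f})
$$D^{2}(z,s,f(z)dz)=D^{2}(z,s,\omega_1)-D^{2}(z,s,\omega_2)+2iD^{1,1}(z,s,\omega_1,\omega_2),$$
together with the analogous polarization for a general real combination, the residue $\res_{s=s_j}D^{(2)}(z,s,\w)$ is a $\C$-valued quadratic polynomial in $(c_1,c_2)$, homogeneous of degree $2$; call it $Q(c_1,c_2)$. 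By~(\ref{resolventpolarstructure}) and Theorem~\ref{summary-D's}~(\ref{recursion}), the residue is a vector in the $\lambda_j$-eigenspace, so its vanishing is governed by finitely many scalar equations; since the real dimension of the eigenspace is $m$, one has to be slightly careful, but in the simple case $m=1$ the residue is a scalar multiple of $u_j$ and $Q$ is literally a binary quadratic form in $(c_1,c_2)$. The hypothesis $L(u_j\otimes F^2,s_j)\neq0$ says precisely, via~(\ref{eq:unfolding-series}) (the pole of $\Gamma(s-s_j)$ at $s=s_j$ matching the simple pole of $\inprod{D^2(z,s,f(z)dz)}{u_j}$), that $Q$ does not vanish at the point corresponding to $\w=f(z)dz$, i.e. at $(c_1,c_2)$ with $\w=\w_1+i\w_2$ viewed inside the \emph{complex} span; but after restricting to the real span and using the complex-linearity relation $\w_1(if)=-\w_2(f)$, $\w_2(if)=\w_1(f)$, one sees $Q$ is a nonzero binary quadratic form on $\R^2$, hence factors into at most two real linear forms, so $Q(c_1,c_2)=0$ cuts out at most two lines through the origin, i.e. at most two directions $\w$ (up to scaling). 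For those directions the Phillips--Sarnak-type fourth order condition fails; for all others $\res_{s=s_j}D^{(2)}(z,s,\w)\neq0$, hence $\Re\hat s_j^{(4)}(0,\w)<0$, and by Theorem~\ref{maintheorem-for-characters}(1) (or Corollary~\ref{cykel}) a cusp form with eigenvalue $s_j(1-s_j)$ is dissolved into a resonance in that direction.

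The main obstacle I expect is bookkeeping in the multiplicity $m>1$ case: there the residue lives in an $m$-dimensional eigenspace and $Q$ becomes an eigenspace-valued quadratic form, so "vanishing of $Q$" is a system of quadratic equations rather than a single binary quadratic, and one must argue that the locus where all components vanish is still contained in at most two lines. The clean way around this is to pair with $u_j$ as in~(\ref{eq:unfolding-series}): the scalar quantity $\inprod{\res_{s=s_j}D^{(2)}(z,s,\w)}{u_j}$ is, up to the nonzero constant $\Gamma(s_j+s_j-1)\res_{s=s_j}\Gamma(s-s_j)/\Gamma(s_j)$ and a power of $4\pi$, a nonzero binary quadratic form in $(c_1,c_2)$ (nonzero because its value at the $f(z)dz$-point is the nonvanishing $L(u_j\otimes F^2,s_j)$), hence has at most two real zero lines; outside those lines this pairing is nonzero, forcing $\res_{s=s_j}D^{(2)}(z,s,\w)\neq0$, which is all we need. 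The only remaining point is to check that the map $\w\mapsto \res_{s=s_j}D^{(2)}(z,s,\w)$ really is a homogeneous quadratic in the coefficients $(c_1,c_2)$ — this follows because $L^{(1)}(\underline\e)$ depends linearly on the $\e_l$ and the recursion of Theorem~\ref{summary-D's}~(\ref{recursion}) is built from products of exactly two such first-order terms (or one second-order term, which is again quadratic in the forms), so no cross-terms of other degrees can appear.
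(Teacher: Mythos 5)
Your proposal is correct and follows essentially the same route as the paper: both arguments rest on the polarization $D^2(z,s,c_1\w_1+c_2\w_2)=c_1^2D^2(z,s,\w_1)+c_2^2D^2(z,s,\w_2)+2c_1c_2D^{1,1}(z,s,\w_1,\w_2)$, the unfolding identity (\ref{eq:unfolding-series}), and the fact that $L(u_j\otimes F^2,s_j)\neq 0$ forces $D^2(z,s,f(z)dz)$ to have a pole at $s_j$. The only difference is packaging: you phrase the conclusion as ``a nonzero binary quadratic form vanishes on at most two real lines,'' while the paper runs the contrapositive through a Vandermonde determinant for three hypothetical exceptional directions; these are the same degree-two counting argument.
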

\begin{proof} The Phillips-Sarnak condition will not be satisfied in
  the whole span of $\w_1$, $\w_2$. Assume that $\Re\hat{s}_j^{(4)}(0,
\w)= 0$ for three distinct directions given by $\eta_k=a_k\w_1+b_k\w_2$,
$k=1,2,3$, i.e. $(a_kb_l-a_lb_k)\neq 0$ for $k\neq l$. We have
\begin{equation*}
  D^2(z,s,\eta_k)=a_k^2 D^2(z,s,\w_1)+b_k^2D^2(z,s,\w_2)+2a_kb_kD^{1,1}(z,s,\w_1,\w_2)  .
\end{equation*}
We can solve for $D^2(z,s,\w_1)$, $D^2(z,s,\w_2)$, and
$D^{1,1}(z,s,\w_1,\w_2) $ as long as the following determinant is
nonzero:
\begin{equation*}
  \abs{\begin{array}{ccc}a_1^2&b_1^2& 2a_1b_1\\
a_2^2&b_2^2& 2a_2b_2\\
a_3^2&b_3^2& 2a_3b_3
\end{array}}=-2a_1^2a_2^2a^2_3\abs{\begin{array}{ccc}1&b_1/a_1& (b_1/a_1)^2\\
1&b_2/a_2& (b_2/a_2)^2\\
1&b_3/a_3& (b_3/a_3)^2\\
\end{array}}
=2\prod_{k>l}(a_kb_l-a_lb_k)\neq 0,
\end{equation*}
where we have used the fact that the last matrix is Vandermonde. 
Since $\Re\hat{s}_j^{(4)}(0,
\eta_k)= 0$, it follows from Theorem \ref{maintheorem} that
$D^2(z,s,\eta_k)$ is regular at $s_j$, and by solving the above system
that $D^2(z,s,\w_1)$, $D^2(z,s,\w_2)$, and
$D^{1,1}(z,s,\w_1,\w_2)$ are regular. This
implies that 
\begin{equation*}D^{2}(z,s,f(z)dz)=D^{2}(z,s,\omega_1)-D^{2}(z,s,\omega_2)+2iD^{1,1}(z,s,\omega_1,\omega_2)\end{equation*}
is regular also at $s=s_j$. By (\ref{eq:unfolding-series}) it follows that $L(u_j\otimes F^2, s)$ must have a zero at $s_j$, since the Gamma factor
$\Gamma(s-s_j)$ has a pole at that point. This contradicts the
assumption of the theorem.

\end{proof}

\begin{remark}
We note that the Dirichlet series $$D(s)=\sum_{n=1}^\infty \sum_{j=1}^{n-1}\frac{a_{n-j}a_j}{j}\frac{1}{n^s}=\frac{1}{2}\sum_{n=1}^\infty \sum_{j=1}^{n-1}\frac{a_{n-j}a_j}{(n-j)j}\frac{1}{n^{s-1}}$$
was recently studied by Diamantis, Knopp, Mason, O'Sullivan and
Deitmar \cite{deitmardiamantis,diamantisknoppmasonosullivan}. The
series $L(u_j\otimes F^2, s)$ (See \ref{Dirichlet series}) has the structure of a
Rankin-Selberg convolution between $D(s)$ and the $L$-function for $u_j$.  
\end{remark}
\begin{remark}
The special value in Theorem \ref{dirichlet-series-condition} is on
the critical line and at the same height as the trivial zeros of
$L(u_j\otimes F^2, s)$. It is of interest to study the non-vanishing
of these special values theoretically and/or numerically.

\end{remark}

There is a generalization of Theorem \ref{dirichlet-series-condition}
which we describe briefly: Define
\begin{equation}
  \label{dirichlet-series-generalized}
  L(u_j\otimes F^l, s)=\sum_{n=1}^{\infty} \left(
    \sum_{k_1+\cdots+k_l=n}\frac{a_{k_1}}{k_1}\cdots\frac{a_{k_l}}{k_l}
    b_{-n}    \right)\frac{1 }{n^{s-1/2}}.
\end{equation}
By essentially the same computation as in (\ref{eq:unfolding-series})
we have 
\begin{equation}\label{choir}
\inprod{D^n(z,s,f(z)dz)}{u_j}=\frac{- 1}{2^{2s+1}\pi^{s+1}}L(u_j\otimes F^n, s)\frac{\Gamma(s+s_j-1)\Gamma(s-s_j)}{\Gamma(s)}.
\end{equation}
Notice that this computation also proves the meromorphic continuation
of $L(u_j\otimes F^n, s)$ to $s\in \C$, and that at $s_j$ the function
$L(u_j\otimes F^n, s)$ has a pole of order at most $n-1$.

\begin{theorem}
 Assume that $L(u_j\otimes F^n, s)$ does not have a zero at $s_j$.  For all
 directions $\w$ in the real span of $\w_1$, $\w_2$ with at most $n$ exceptions we have
$$\Re \hat{s}_j^{(2r)}(0, \omega)\ne 0,$$ for some $r\leq n$. In particular there exists a cusp form with eigenvalue $s_j(1-s_j)$ that is
dissolved in this direction.

\end{theorem}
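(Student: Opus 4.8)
The plan is to run the proof of Theorem~\ref{dirichlet-series-condition}, which is the case $n=2$, but with $n+1$ ``pure'' objects in place of three and an $(n+1)\times(n+1)$ Vandermonde determinant in place of the $3\times3$ one. I argue by contradiction: suppose there are $n+1$ pairwise non-proportional directions $\eta_k=a_k\omega_1+b_k\omega_2$, $k=1,\dots,n+1$, each of which is an exception, that is $\Re\hat{s}_j^{(2r)}(0,\eta_k)=0$ for every $r=1,\dots,n$ (here $\omega_1=\Re(f(z)\,dz)$, $\omega_2=\Im(f(z)\,dz)$ for the weight-$2$ form $f$ with antiderivative $F$). The aim is to show this forces $L(u_j\otimes F^n,s)$ to vanish at $s_j$, contradicting the hypothesis; since this rules out $n+1$ such directions, at most $n$ exceptions can occur.

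First I would convert the vanishing of the $\Re\hat{s}_j^{(2r)}(0,\eta_k)$ into regularity at $s_j$. Working in the one-parameter character family $\epsilon\mapsto\epsilon\eta_k$, I induct on $N$: the base function $D^{(0)}=E(z,s)$ is regular at $s_j=1/2+ir_j$ because this point lies on $\Re(s)=1/2$ and is $\neq 1/2$; and if $D^{(0)},\dots,D^{(N-1)}$ are regular at $s_j$, then Theorem~\ref{maintheorem-for-characters} gives $D^{(N)}$ at most a simple pole there together with $\Re\hat{s}_j^{(2N)}(0,\eta_k)=-\tfrac1{2m}\binom{2N}{N}\norm{\res_{s=s_j}D^{(N)}(z,s)}^2$, so the exceptionality of $\eta_k$ forces $\res_{s=s_j}D^{(N)}(z,s)=0$ and hence $D^{(N)}$ regular at $s_j$. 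Iterating for $N=1,\dots,n$, and recalling that in the notation of Section~\ref{multivariate} the $r$-th perturbation derivative $D^{(r)}(z,s)$ for the direction $\eta_k$ has, at $s_j$, the same singularity as the series $D^{r}(z,s,\eta_k)$ of (\ref{d-n-mseries}), I obtain that $D^{n}(z,s,\eta_k)$ is regular at $s_j$ for each $k$.

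Expanding $\eta_k=a_k\omega_1+b_k\omega_2$ inside (\ref{d-n-mseries}) by the binomial theorem gives, for each $k$,
\begin{equation*}
D^{n}(z,s,\eta_k)=\sum_{j=0}^{n}\binom{n}{j}a_k^{\,j}b_k^{\,n-j}\,D^{j,n-j}(z,s,\omega_1,\omega_2),
\end{equation*}
a linear system in the $n+1$ functions $D^{j,n-j}(z,s,\omega_1,\omega_2)$, $j=0,\dots,n$, whose coefficient matrix has determinant a nonzero constant times the homogeneous Vandermonde determinant $\prod_{k<l}(a_kb_l-a_lb_k)$, which is nonzero exactly because the $\eta_k$ are pairwise non-proportional. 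Solving, every $D^{j,n-j}(z,s,\omega_1,\omega_2)$ is regular at $s_j$, and since $f(z)\,dz=\omega_1+i\omega_2$ yields $D^{n}(z,s,f(z)\,dz)=\sum_{j=0}^{n}\binom{n}{j}i^{\,n-j}D^{j,n-j}(z,s,\omega_1,\omega_2)$, the function $D^{n}(z,s,f(z)\,dz)$, hence also $\inprod{D^{n}(z,s,f(z)\,dz)}{u_j}$, is regular at $s_j$. But (\ref{choir}) equates the latter with $L(u_j\otimes F^n,s)$ times $\tfrac{-1}{2^{2s+1}\pi^{s+1}}\cdot\tfrac{\Gamma(s+s_j-1)\Gamma(s-s_j)}{\Gamma(s)}$, where at $s_j$ the factors $\Gamma(s+s_j-1)$ (value $\Gamma(2s_j-1)\neq0$), $\Gamma(s)$ and $2^{-2s-1}\pi^{-s-1}$ are holomorphic and nonzero while $\Gamma(s-s_j)$ has a simple pole; regularity of the left side therefore forces $L(u_j\otimes F^n,s)$ to vanish at $s_j$, the desired contradiction. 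Finally, for a non-exceptional $\omega$ one has $\Re\hat{s}_j^{(2r)}(0,\omega)\neq0$ for some $r\le n$, so $\Re\hat{s}_j(\epsilon,\omega)$ is not identically $1/2$ near $\epsilon=0$; as in the proof of Corollary~\ref{cykel} the leading nonvanishing term of $\Re\hat{s}_j(\epsilon,\omega)$ must be of even order (an odd one would push a singular point into $\Re(s)>1/2$, which is impossible), so the weighted mean leaves $\Re(s)=1/2$ and hence at least one branch $s_{j,l}(\epsilon,\omega)$ has real part $\neq1/2$ for small $\epsilon$, i.e.\ it is a resonance, so a cusp form is dissolved.

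The step I expect to require the most care is the identification, in the second paragraph, of the one-parameter perturbation derivatives $D^{(r)}(z,s)$ for the family $\epsilon\eta_k$ with the series $D^{r}(z,s,\eta_k)$, together with the invariance of the relevant singularities under a shift of $\eta_k$ within its cohomology class, so that the harmonic, non-compactly supported forms $\omega_1,\omega_2$ may be used: one must check that the quadratic term $L^{(2)}$ of $L(\epsilon)$ enters the recursion of Theorem~\ref{eisensteinregularity} exactly the way the $\partial_{\epsilon_l,\epsilon_m}L$-terms enter the recursion of Theorem~\ref{summary-D's} (\ref{recursion}), so that the two families of functions agree through meromorphic continuation. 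This is precisely the content of the remarks following Theorem~\ref{summary-D's}, so the only genuinely new ingredient is the $(n+1)\times(n+1)$ Vandermonde computation, generalizing the $3\times3$ determinant in the proof of Theorem~\ref{dirichlet-series-condition}.
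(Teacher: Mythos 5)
Your proposal is correct and follows essentially the same route as the paper's proof: a contradiction argument with $n+1$ pairwise non-proportional exceptional directions, the binomial expansion of $D^n(z,s,\eta_k)$ in terms of $D^{j,n-j}(z,s,\omega_1,\omega_2)$ inverted via the $(n+1)\times(n+1)$ Vandermonde determinant, regularity of $D^n(z,s,f(z)\,dz)$ at $s_j$, and the simple pole of $\Gamma(s-s_j)$ in (\ref{choir}) forcing $L(u_j\otimes F^n,s_j)=0$. The only differences are expository: you spell out the induction that converts $\Re\hat{s}_j^{(2r)}(0,\eta_k)=0$ for $r\le n$ into regularity of $D^{n}(z,s,\eta_k)$ (which the paper compresses into a single appeal to Theorem \ref{maintheorem-for-characters}) and you make explicit the final dissolution step via Corollary \ref{cykel}.
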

\begin{proof} Assume that $\Re\hat{s}_j^{(2r)}(0,
\eta_k)= 0$, $r=1,\ldots, n$ for $n+1$ distinct directions given by $\eta_k=a_k\w_1+b_k\w_2$,
$k=0,\ldots n$, i.e. $(a_kb_l-a_lb_k)\neq 0$ for $k\neq l$. We have
\begin{equation*}
  D^n(z,s,\eta_k)=\sum_{l=0}^n\binom{n}{l}a_k^lb_k^{n-l}D^{l,{n-l}}(z,s,\w_1,\w_2)
\end{equation*}
We can solve for all $D^{l,{n-l}}(z,s,\w_1,\w_2)$ as long as the following determinant is
nonzero:
\begin{align*}
  \abs{\binom{n}{l}a_k^{n-l}b_k^l
  }_{k,l=0}^n&=\left(\prod_{l=0}^n\binom{n}{l}\right)\abs{a_k^{n-l}b_k^l
  }_{k,l=0}^n=\left(\prod_{l=0}^n\binom{n}{l}a_l^n\right)\abs{(b_k/a_k)^l
  }_{k,l=0}^n\\ &=\left(\prod_{l=0}^n\binom{n}{l}\right)\prod_{k<l}(a_kb_l-a_lb_k)\neq 0,
\end{align*}
where again we have used the fact that the last matrix is
Vandermonde. It follows that all $D^{n-{l},l}(z,s)$
are linear combinations of $D^{n}(z,s,\eta_k)$, $k=0,\ldots n.$

Since $$\Re \hat{s}_j^{(2r)}(0, \eta_k)= 0,$$
Theorem~\ref{maintheorem-for-characters} allows us to conclude that $D^{n}(z,s,\eta_k)$ is regular at $s_j$, and
therefore also that $D^{n-{l},l}(z,s)$ is regular at $s_j$. Since 
\begin{equation*}
  D^n(z,s,f(z)dz)=\sum_{j=0}^n\binom{n}{j}i^{n-j}D^{j,n-j}(z,s,\w_1,\w_2)
\end{equation*}
it follows also that $ D^n(z,s,f(z)dz)$ is regular at $s_0$.

Therefore the left of (\ref{choir}) is regular. It follows that the
right-hand side of (\ref{choir}) is regular, which proves -- since
$\Gamma(s-s_j)$ has a pole at $s_j$ --  that
$L(u_j\otimes F^n, s)$ has a zero at $s_j$.  But this contradicts the assumption of the theorem. 
\end{proof}

\section{Are Maa{\ss} cusp forms isolated in deformation space?}\label{subvarieties-section}
In this section we investigate the question of whether cuspidal
eigenvalues are isolated in deformation spaces.
As before, we assume that $M$ has one cusp only.

Let $\omega_1,\ldots, \omega_{2g}$ be a basis for the space
$\hbox{Harm}_\R^1(M)$ of real harmonic
cuspidal 1-forms on $M$ , and let $\alpha_i$ be compact differentials in the same
cohomology classes. We consider the corresponding multiparameter perturbations as in
section \ref{multivariate}.

Since we know that $\Re(s_j(\underline{\e}))$ has a maximum at $\underline{\e}=\underline{0}$, we know that its Hessian must be negative semi-definite. If  the Hessian were negative definite, the
maximum would be strict, i.e. the cuspidal eigenvalue would be
isolated. We  show below  that in most cases the Hessian is not strictly
negative definite, so we do not  know  apriori that the eigenvalue should be isolated.

We start by finding an expression for the Hessian:

 \begin{proposition} \label{general-hessian}Let $s_j=s_j(0)\neq 1/2$ be an embedded eigenvalue 
  of multiplicity $m$. Then the Hessian matrix $H=(h_{kl})_{k, l=1, \ldots, 2g}$ of $\hat s_j(\underline{\e})$ at
  $\underline{\e}=\underline{0}$ is given by
$$h_{kl}=-\frac{1}{m}\Re\inprod{\res_{s=s_j}D(z,s,\alpha_k)}{\res_{s=s_j}D(z,s,\alpha_l)}.$$
\end{proposition}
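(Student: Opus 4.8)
The plan is to compute the Hessian of $\hat s_j(\underline\e)$ at $\underline\e=\underline 0$ by specializing the main dissolving result, Theorem~\ref{maintheorem-for-characters}, to a two-parameter subfamily. Fix indices $k$ and $l$ and consider the one-parameter curve $\e\mapsto\underline\e = \e(t_k\alpha_k)+\e(t_l\alpha_l)$ — more precisely the perturbation induced by the single $1$-form $\alpha = t_k\alpha_k+t_l\alpha_l$ with real parameters $t_k,t_l$. Along such a direction the second-order coefficient is governed by the $n=1$ case of Theorem~\ref{maintheorem-for-characters}: $\Re\hat s_j^{(2)}(0,\alpha)=-\frac1{m}\norm{\res_{s=s_j}D(z,s,\alpha)}^2$, where I am writing $D(z,s,\alpha)=D^1(z,s,\alpha)$ for the first perturbation term. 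The point is that $\alpha\mapsto D^{(1)}(z,s,\alpha)$ depends linearly on $\alpha$ — this is visible from the recursion in Theorem~\ref{summary-D's}(\ref{recursion}) together with the linearity of $\left.\partial_{\e}L(\underline\e)\right|_{\underline\e=\underline 0}$ in $\alpha$ (see (\ref{firstLvariation}), noting the $\e$-dependent quadratic term drops out at $\underline\e=\underline 0$) — so the residue map $\alpha\mapsto\res_{s=s_j}D(z,s,\alpha)$ is $\R$-linear.

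First I would set $v_k := \res_{s=s_j}D(z,s,\alpha_k)\in L^2(\GmodH)$. Then by linearity $\res_{s=s_j}D(z,s,t_k\alpha_k+t_l\alpha_l) = t_k v_k + t_l v_l$, and the $n=1$ Fermi formula gives, for the one-parameter family in direction $t_k\alpha_k+t_l\alpha_l$,
\begin{equation*}
\left.\frac{d^2}{d\e^2}\Re\hat s_j\right|_{\e=0}
= -\frac1{m}\norm{t_k v_k + t_l v_l}^2
= -\frac1{m}\left(t_k^2\norm{v_k}^2 + 2t_kt_l\Re\inprod{v_k}{v_l} + t_l^2\norm{v_l}^2\right).
\end{equation*}
On the other hand, by the chain rule, this same second derivative equals the quadratic form $\sum_{a,b}h_{ab}t_at_b$ of the Hessian $H$ restricted to the $(k,l)$-coordinate plane, i.e. $h_{kk}t_k^2 + 2h_{kl}t_kt_l + h_{ll}t_l^2$. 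Matching coefficients of $t_k t_l$ (and of $t_k^2$) yields $h_{kl} = -\frac1{m}\Re\inprod{v_k}{v_l}$, which is precisely the claimed formula; the diagonal case $k=l$ is the same computation with a single parameter.

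The steps are: (1) record that $D^{(1)}(z,s,\cdot)$, hence its residue at $s_j$, is $\R$-linear in the $1$-form, using Theorem~\ref{summary-D's}(\ref{recursion}) and (\ref{firstLvariation}); (2) invoke Theorem~\ref{maintheorem-for-characters} with $n=1$ along an arbitrary real-linear-combination direction to get $\Re\hat s_j^{(2)}(0,\alpha) = -\frac1m\norm{\res_{s=s_j}D(z,s,\alpha)}^2$; (3) identify the left side with the Hessian quadratic form via the multivariate chain rule, valid because $\hat s_j(\underline\e)$ is real analytic in $\underline\e$ (the branches move real-analytically, Theorem~\ref{singularsetproperties}, and their mean is symmetric); (4) expand the norm squared and compare coefficients. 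The main obstacle — really the only subtle point — is step (3): one must be sure that the second $\e$-derivative of $\Re\hat s_j$ along the line $\underline\e = \e(t_k e_k + t_l e_l)$ genuinely equals $\sum h_{ab}t_at_b$, which requires that $\hat s_j$ is $C^2$ as a function of the several parameters near $\underline 0$ and that mixed partials may be read off from directional second derivatives (polarization); this follows from the joint real analyticity of $\hat s_j(\underline\e)$, which in turn rests on the real analyticity of the perturbed Eisenstein data established in Theorem~\ref{eisensteinregularity} and Theorem~\ref{singularsetproperties}. Once this is in hand the proposition is immediate.
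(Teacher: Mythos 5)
Your argument is correct, but it is genuinely different from the one in the paper. You obtain the off-diagonal entries by polarization: you apply the $n=1$ case of Theorem~\ref{maintheorem-for-characters} to the single direction $t_k\alpha_k+t_l\alpha_l$, use the $\R$-linearity of $\alpha\mapsto\res_{s=s_j}D(z,s,\alpha)$ (which does hold, since $D^{(1)}(z,s)=-R(s)L^{(1)}E(z,s)$ and $L^{(1)}$ is linear in the form by (\ref{firstLvariation})), and match the resulting quadratic form in $(t_k,t_l)$ against $\sum h_{ab}t_at_b$. The paper instead computes $h_{kl}$ directly: it differentiates the contour-integral formula (\ref{williteverend}) in $\e_k,\e_l$, reduces the mixed partial to $\res_{s=s_j}\partial_{\e_k,\e_l}\phi(s,\underline\e)\vert_{\underline\e=\underline 0}$, evaluates that residue via the multivariate functional equation (Remark~\ref{multivariate-functional-eq}) and the technique of Proposition~\ref{scatteringphin}, and then inserts (\ref{Dn-residue-special}) and (\ref{get-to-conjugate}). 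Your route is shorter and makes the Gram-matrix structure of the Hessian (hence its negative semi-definiteness) transparent without any new scattering computation; the paper's route is more self-contained in that it never needs to invoke the polarization identity for a $C^2$ function of several variables, and it exercises the same machinery used elsewhere for higher-order derivatives. You correctly isolate the one point that needs care, namely joint $C^2$ (indeed real-analytic) dependence of $\Re\hat s_j$ on $\underline\e$; strictly speaking Theorems~\ref{eisensteinregularity} and~\ref{singularsetproperties} are stated for one-parameter families, and the cleanest justification is that $2m\Re(\hat s(\underline\e)-s_j)$ is given by the contour integral (\ref{williteverend}) against $\phi'(s,\underline\e)/\phi(s,\underline\e)$, which is jointly real analytic in $\underline\e$ --- the same fact the paper's own proof relies on.
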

\begin{proof} The diagonal terms $h_{kk}$ have already been computed in Theorem~\ref{maintheorem-for-characters} (2), and the
  off-diagonal terms can be found similarly:
we note that by  (\ref{Dn-residue}) we have
\begin{equation}\label{Dn-residue-special}\res_{s=s_j}D(z,s,\alpha_k)=\frac{1}{2s_j-1}\sum_{i=1}^m\inprod{\partial_{\e_k}L(\underline{\e})\vert_{\underline\e=\underline 0} E(z,s)}{u_{j,i}}u_{j,i}.\end{equation}
We repeat Eq. (\ref{flocke})
\begin{equation}\label{williteverend}2m\Re (\hat{s}(\underline{\e})-s_j)=-\frac{1}{2\pi
  i}\int_{\partial B(s_j,u)}(s-s_j)\frac{\phi'(s, \underline\e)}{\phi(s, \underline\e)}\, ds.\end{equation}
An analysis similar to the one in the  proof of Theorem~\ref{maintheorem}
gives
\begin{equation}\label{lookatthis}\partial_{\e_k,\e_l}2m\Re
(\hat{s}(\underline{\e})-s_j)\vert_{\underline{\e}=\underline{0}}=\frac{\res_{s=s_j}\partial_{\e_k,\e_l}
\phi(s,\underline{\e})\vert_{\underline\e=\underline 0}}{\phi(s_j,0)}.\end{equation}
To compute $\res_{s=s_j}\partial_{\e_k,\e_l}
\phi(s,\underline{\e})\vert_{\underline\e=\underline 0}$
we use Remark \ref{multivariate-functional-eq} and the same technique
as in the proof of Proposition \ref{scatteringphin}.
and find that $(2s-1)\partial_{\e_k,\e_l}\phi(s,\underline{\e})\vert_{\underline\e=\underline
  0}$ equals
$$
\int_{M}E(z,s)\left(\partial_{\e_l}
L(\underline{\e})\vert_{\underline\e=\underline 0} D(z,s,\alpha_k)+\partial_{\e_k}
L(\underline{\e})\vert_{\underline\e=\underline 0} D(z,s,\alpha_l)+ \partial_{\e_k,\e_l}
L(\underline{\e})\vert_{\underline\e=\underline 0} D(z,s) \right)d\mu(z).$$
We note that the term involving $\partial_{\e_k,\e_l}
L(\underline{\e})\vert_{\underline\e=\underline 0} D(z,s)$ is regular
so  it does not contribute to the residue in (\ref{lookatthis}). The rest equals
$$
\int_{M}D(z,s,\alpha_l)\partial_{\e_k}L(\underline{\e})\vert_{\underline\e=\underline 0} E(z,s) + D(z,s,\alpha_k)\partial_{\e_l}L(\underline{\e})\vert_{\underline\e=\underline 0} E(z,s) 
d\mu(z).$$
Taking residues and using (\ref{get-to-conjugate})  we find that $\res_{s=s_j}\partial_{\e_k,\e_l}
\phi(s,\underline{\e})\vert_{\underline\e=\underline 0}$ equals
$$\frac{-\phi(s_j)}{2s_j-1} \left(\inprod{\partial_{\e_k}L(\underline{\e})\vert_{\underline\e=\underline 0} E(z,s)}{\res_{s=s_j}D(z,s,\alpha_l)} + \inprod{\partial_{\e_l}L(\underline{\e})\vert_{\underline\e=\underline 0} E(z,s)}{\res_{s=s_j}D(z,s,\alpha_k)} \right).
$$
We insert (\ref{Dn-residue-special}). This makes the above expression equal to
\begin{eqnarray*}\frac{-\phi(s_j)}{\abs{2s_j-1}^2}&\displaystyle \Big(\sum_{i=1}^m \inprod{\partial_{\e_k}L(\underline{\e})\vert_{\underline\e=\underline 0} E(z,s)}{u_{j,i}}\inprod{u_{j,i}}{\partial_{\e_l}L(\underline{\e})\vert_{\underline\e=\underline 0} E(z,s)}\\&+  \inprod{\partial_{\e_l}L(\underline{\e})\vert_{\underline\e=\underline 0} E(z,s)}{u_{j,i}}\inprod{u_{j,i}}{\partial_{\e_k}L(\underline{\e})\vert_{\underline\e=\underline 0} E(z,s)}\Big)\\
=-\phi(s_j)2\Re\frac{1}{\abs{2s_j-1}^2}&\displaystyle\sum_{i=1}^m \inprod{\partial_{\e_k}L(\underline{\e})\vert_{\underline\e=\underline 0} E(z,s)}{u_{j,i}}\inprod{u_{j,i}}{\partial_{\e_l}L(\underline{\e})\vert_{\underline\e=\underline 0} E(z,s)},
\end{eqnarray*}
which gives the desired result.

\end{proof}

\begin{remark} We note that since the Hessian is $-m^{-1}$ times the real
  part of a Gram matrix we see that it is indeed negative
  semi-definite, as is should be.
\end{remark}
\begin{remark} It is not hard to see that the Hessian  
in many cases fails to be strictly negative definite. For $g>m$,   where $m$ is the
  multiplicity of the  cuspidal eigenvalue, the null space of the
  Hessian is at least of dimension $2(g-m)$. This is seen as follows: Consider
  the map between real vector spaces $M:\hbox{Harm}_\R^1(M)\to \R^{2m}$ defined
  by $$\omega\mapsto \left(\Re\inprod{\res_{s=s_j}D(z,s,\w)}{u_l},\Im
\inprod{\res_{s=s_j}D(z,s,\w)}{u_l}\right)_{l=1,\ldots
  m}, $$
which must have a kernel of dimension $\geq 2g-2m$. We note that for
elements $\omega$ in this kernel we have $\res_{s=s_j}D(z,s,\omega)=0$. By choosing a
basis for this kernel and extending it to a basis for
$\hbox{Harm}_\R^1(M)$ we see from the expression in Proposition \ref{general-hessian}
that in this basis the Hessian is expressed by a matrix which has all
zeroes outside a block of size $2m\times2m$.  
\end{remark}

\subsection{Symmetries} 
We now want to consider surfaces with certain symmetries.

Define $T(z)=-\overline{z}$. Clearly $T^2=I$.
An (eigen)function $h$ is  even (has parity 1) resp. odd (has parity
-1) if $h(Tz)=\pm h(z)$.
A one form $\alpha=p(x, y)\, dx+q(x, y)\, dy$ is even resp. odd if $\alpha\circ T=\pm \alpha$.
This means that $p$ is odd resp. even, while $q$ is even resp. odd. If
$\alpha=\Re(f(z)dz)$ then $\alpha$ is even resp. odd if $f$ has
strictly imaginary resp. real Fourier coefficients.
A Fuchsian group $\Gamma$ has fundamental domain symmetric with
respect to the imaginary axis if $\Gamma=T\Gamma T$. In the rest of
this section we assume this to be case. 
 
With these assumptions we easily see that  the Eisenstein series is
even for all $s$.
By (\ref{firstLvariation}) we see that, if $\alpha$ is even resp. odd, then $L^{(1)}(0)$ 
preserves the parity of $h$ resp. changes the parity of $h$.

The Phillips--Sarnak integral $\langle L^{(1)}(0) u_j, E(z,
s_j)\rangle$ clearly vanishes if  the inner product
 $\langle L^{(1)}(0) u_j, E(z, s)\rangle=0$ for \emph{all} $s$. This happens
 automatically if $\alpha$ is even and $u_j$ is odd, or $\alpha$ is
 odd and $u_j$ is even.

\begin{lemma}\label{parity-Ds}Let the $1$-forms $\omega_l$ have parity $\eta_l\in\{1, -1\}$ for $l=1, \ldots k$. Then the parity of 
$D^{n_1, \ldots , n_k}(z, s, \omega_1, \ldots, \omega_k)$ in (\ref{d-n-mseries})  is $\prod_{l=1}^k \eta_l^{n_l}$.
\end{lemma}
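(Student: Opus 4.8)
The plan is to compute the action of $T$ on the defining series \eqref{d-n-mseries} directly, using the conjugation symmetry $\Gamma = T\Gamma T$ and tracking how $T$ acts on each of the three ingredients: the coset sum, the integrals $\int_{i\infty}^{\gamma z}\omega_l$, and the factor $\Im(\gamma z)^s$.

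First I would record the elementary facts. Since $T(z) = -\bar z$ is an (orientation-reversing) isometry of $\H$ fixing $i\infty$, we have $\Im(Tz) = \Im(z)$, hence $\Im(T\gamma z)^s = \Im(\gamma z)^s$. Since $T\Gamma T = \Gamma$ and $T$ fixes the cusp at $\infty$, conjugation by $T$ permutes the cosets $\Gamma_\infty\backslash\Gamma$: writing $\gamma' = T\gamma T$ we get a bijection of $\GinfmodG$ onto itself, and $\gamma T z = T\gamma' z$. For the integrals, the key point is the parity hypothesis: $\omega_l \circ T = \eta_l\,\omega_l$. Combined with $T(i\infty) = i\infty$ and the change of variables $w \mapsto Tw$ in the line integral, this gives
\[
\int_{i\infty}^{\gamma T z}\omega_l = \int_{i\infty}^{T\gamma' z}\omega_l = \int_{T(i\infty)}^{T(\gamma' z)}\omega_l = \int_{i\infty}^{\gamma' z} (\omega_l\circ T) = \eta_l \int_{i\infty}^{\gamma' z}\omega_l,
\]
where I should be slightly careful that the path of integration can be taken as the image under $T$ of a path from $i\infty$ to $\gamma' z$, which is legitimate since the $\omega_l$ are closed (harmonic) and the relevant integrals are path-independent on $\H$.

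Assembling these, for $\Re(s)$ large (where the series converges absolutely, so rearrangement is justified) I would write
\[
D^{n_1,\ldots,n_k}(Tz, s, \omega_1,\ldots,\omega_k) = \sum_{\gamma\in\GinfmodG}\prod_{l=1}^k\left(\int_{i\infty}^{\gamma T z}\omega_l\right)^{n_l}\Im(\gamma T z)^s = \sum_{\gamma'\in\GinfmodG}\prod_{l=1}^k\left(\eta_l\int_{i\infty}^{\gamma' z}\omega_l\right)^{n_l}\Im(\gamma' z)^s,
\]
and pulling the constants $\eta_l^{n_l}$ out of the product and the sum yields $\left(\prod_{l=1}^k \eta_l^{n_l}\right) D^{n_1,\ldots,n_k}(z,s,\omega_1,\ldots,\omega_k)$. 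This establishes the parity claim on the half-plane of absolute convergence, and it then extends to all $s\in\C$ by the meromorphic continuation recorded in Theorem \ref{summary-D's}, since two meromorphic functions agreeing on an open set agree everywhere.

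The main obstacle is the careful bookkeeping for the line integrals: one must check that $T$ maps $i\infty$ to $i\infty$ (true, as $T$ is an extension to $\partial\H$ fixing that cusp), that the pullback identity $\int_{Tp}^{Tq}\omega = \int_p^q(\omega\circ T)$ holds with the correct sign despite $T$ being orientation-reversing (the sign is absorbed correctly because reversing orientation of the path and pulling back the form both contribute, and the net effect is exactly the parity factor $\eta_l$), and that path-independence legitimizes choosing the $T$-image path. Once the sign conventions for $\omega\circ T$ are pinned down to match the definitions given just before the lemma statement, the rest is a formal manipulation of the absolutely convergent series.
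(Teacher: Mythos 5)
Your proof is correct and follows essentially the same route as the paper's: conjugate by $T$ using $\Gamma = T\Gamma T$ and $T(i\infty)=i\infty$ to reindex the coset sum, pull back the $1$-forms via $\omega_l\circ T=\eta_l\omega_l$, and use $\Im(Tz)=\Im(z)$. Your additional remarks on path-independence and on extending the identity from the half-plane of convergence by meromorphic continuation are sensible refinements of the same argument.
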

\begin{proof} For $\gamma\in \Gamma$ we write $\gamma'=T\gamma T$, which varies over all of $\Gamma$. Since $i\infty$ is fixed by $T$, we have that $ \Gamma_{\!\!\infty} =T\Gamma_{\!\!\infty}T$. We have
$$D(Tz, s, \omega_1, \ldots, \omega_k)=\sum_{\gamma\in \GinfmodG}\prod_{l=1}^k\left(\int^{\gamma Tz}_{i\infty}\omega_l\right)^{n_l}\Im (\gamma Tz)^s$$
$$   =\sum_{\gamma'\in \GinfmodG}\prod_{l=1}^k\left(\int^{T\gamma' z}_{Ti\infty}\omega_l\right)^{n_l}\Im (T\gamma 'z)^s=\sum_{\gamma'\in \GinfmodG}\prod_{l=1}^k\left(\int^{\gamma' z}_{i\infty}\omega_l\circ T\right)^{n_l}\Im (\gamma 'z)^s$$
$$=\prod_{l=1}^k\eta_l^{n_l}\sum_{\gamma'\in \GinfmodG}\prod_{l=1}^k\left(\int^{\gamma' z}_{i\infty}\omega_l\right)^{n_l}\Im (\gamma 'z)^s=\prod_{l=1}^k\eta_l^{n_l}D(z, s, \omega_1, \ldots, \omega_k).$$
\end{proof}
Recall that for surfaces with one cusp the scattering function is
given by (compare (\ref{karakter}))
\begin{equation}\label{phiseries}\phi(s, \underline{\e})=\sqrt{\pi}\frac{\Gamma (s-1/2)}{\Gamma (s)}\sum_{\gamma\in \Gamma_{\infty}\setminus \Gamma /\Gamma_{\infty}}\frac{\chi(\gamma, \underline{\e})}{|c|^{2s}}
\end{equation}
when  $\Re(s)>1$. We assume that all $\alpha_i$ are eigenfunctions of $T$ with parity $\eta_i$.
\begin{lemma}\label{parity-phi} Let $\alpha_i$ have parity $\eta_i\in \{ +1, -1\}$. The scattering function satisfies $$\phi(s,
  (\eta_i\e_i)_{i=1}^n)=\phi(s, (\e_i)_{i=1}^n).$$
\end{lemma}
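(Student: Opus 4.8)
The plan is to relate the character $\chi(\gamma,\underline\e)$ under the sign change $\e_i\mapsto\eta_i\e_i$ to the action of $T$ on $\Gamma$, and then to invoke the series representation (\ref{phiseries}) valid for $\Re(s)>1$, extending to all $s$ by meromorphic continuation. Concretely, for $\gamma\in\Gamma$ write $\gamma'=T\gamma T$; since $\Gamma=T\Gamma T$, the map $\gamma\mapsto\gamma'$ is a bijection of $\Gamma$, and since $i\infty$ is fixed by $T$ it descends to a bijection of the double coset space $\Gamma_\infty\backslash\Gamma/\Gamma_\infty$. Moreover $T$ sends the lower-left entry $c$ of $\gamma$ to $\pm c$ of $\gamma'$ (conjugation by $T=-\bar z$, which on $\mathrm{SL}_2$ is conjugation by $\mathrm{diag}(1,-1)$ up to sign, negates the off-diagonal entries), so $\abs{c(\gamma')}=\abs{c(\gamma)}$ and the factor $\abs{c}^{-2s}$ is unchanged.

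The main computation is then to check that $\chi(\gamma',(\e_i))=\chi(\gamma,(\eta_i\e_i))$. Using the definition (\ref{karakter}), this reduces to showing $\int_{z_0}^{\gamma' z_0}\alpha_i=\eta_i\int_{z_0}^{\gamma z_0}\alpha_i$ up to a term independent of $\gamma$ (which cancels in the double coset sum, or can be absorbed by choosing $z_0$ on the imaginary axis). Here I would substitute $w=Tw'$ in the line integral, use $\alpha_i\circ T=\eta_i\alpha_i$, and use that $T$ fixes the axis; this is the same manipulation used in the proof of Lemma \ref{parity-Ds}. Reindexing the sum in (\ref{phiseries}) by $\gamma\mapsto\gamma'$ and combining the two observations ($\abs{c}^{-2s}$ invariant, character transforms by the sign flip) yields $\phi(s,(\e_i))=\phi(s,(\eta_i\e_i))$ for $\Re(s)>1$, and the identity propagates to all $s$ by the principle of analytic continuation since both sides are meromorphic in $s$.

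The only delicate point is the role of the basepoint $z_0$: the character $\chi$ depends on $z_0$, and the substitution $w=Tw'$ sends the integration path from $z_0$ to $\gamma' z_0$ into a path from $Tz_0$ to $\gamma z_0\cdot$(something), so one gets $\int_{z_0}^{\gamma' z_0}\alpha_i = \eta_i\int_{Tz_0}^{\gamma z_0}\alpha_i = \eta_i\bigl(\int_{z_0}^{\gamma z_0}\alpha_i + \int_{Tz_0}^{z_0}\alpha_i\bigr)$. The extra term $\int_{Tz_0}^{z_0}\alpha_i$ is independent of $\gamma$, hence gives a $\gamma$-independent unimodular constant; since $\phi$ itself is invariant under the cohomologically trivial operation of moving $z_0$ (equivalently, this constant factors out of the double coset sum and then must be $1$ because $\phi(s,\underline 0)$ is recovered at $\underline\e=\underline 0$ and the whole expression is a genuine scattering coefficient), it does not affect the final equality. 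Alternatively, and most cleanly, choose $z_0$ on the imaginary axis so that $Tz_0=z_0$ and the extra term vanishes identically; this is harmless since $\phi$ does not depend on the choice of $z_0$. I expect this basepoint bookkeeping to be the only real obstacle; everything else is a direct reindexing of an absolutely convergent series.
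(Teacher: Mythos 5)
Your proposal is correct and follows essentially the same route as the paper: reindex the double coset sum in (\ref{phiseries}) by $\gamma\mapsto T\gamma T$, note $\abs{c(T\gamma T)}=\abs{c(\gamma)}$, and use $\alpha_i\circ T=\eta_i\alpha_i$ to show the character transforms by the sign flip. The basepoint issue you flag is genuinely the only delicate point, and your resolution (take $z_0$ on the imaginary axis, or note that the period $\int_{z_0}^{\gamma z_0}\alpha_i$ of a $\Gamma$-invariant closed form is basepoint-independent, so in fact no extra term arises) is exactly what the paper's one-line identity $\int_{z_0}^{T\gamma Tz_0}\alpha_i=\eta_i\int_{Tz_0}^{\gamma Tz_0}\alpha_i$ implicitly relies on.
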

\begin{proof}
We have $c(T\gamma T)=-c(\gamma)$ and $\int_{z_{0}}^{T\gamma
  Tz_0}\alpha_i=\eta_i\int_{Tz_{0}}^{\gamma Tz_0}\alpha_i$. Now the result follows easily using (\ref{phiseries}).
\end{proof}
\begin{corollary}\label{lots-zero} Let $\displaystyle \sum_{\substack{i=1\\\alpha_i
     \rm{  odd}}}^{n}n_i$ or $\displaystyle \sum_{i=1}^{n}n_i$ be odd. Then 
$$\partial_{\e_1}^{n_1}\cdots\partial_{\e_{2g}}^{n_{n}}\phi(s,\e)\vert_{\underline{\e}=\underline 0}=0.$$
\end{corollary}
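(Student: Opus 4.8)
The plan is to combine the two symmetry lemmas that immediately precede this corollary, namely Lemma~\ref{parity-phi} (which says $\phi(s,(\eta_i\e_i)_{i=1}^n)=\phi(s,(\e_i)_{i=1}^n)$) with the purely formal observation about Taylor coefficients of a function invariant under a sign-change in some of its variables. So the first step is to fix the (at most $2g$, here written $n$) indices and split them into the ``odd'' block $O=\{i:\alpha_i\ \text{odd}\}$ and its complement. Applying the substitution $\e_i\mapsto\eta_i\e_i$ coordinate-wise leaves $\phi(s,\underline\e)$ unchanged, and $\eta_i=-1$ exactly for $i\in O$; so $\phi$ is invariant under flipping the signs of all the variables indexed by $O$.

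The second step is to differentiate the identity $\phi(s,(\eta_i\e_i)_i)=\phi(s,(\e_i)_i)$ $n_i$ times in each $\e_i$ and evaluate at $\underline\e=\underline0$. By the chain rule the left-hand side produces the factor $\prod_i\eta_i^{n_i}=\prod_{i\in O}(-1)^{n_i}=(-1)^{\sum_{i\in O}n_i}$ times the mixed partial, so we get
\begin{equation*}
\left((-1)^{\sum_{i\in O}n_i}-1\right)\,\partial_{\e_1}^{n_1}\cdots\partial_{\e_n}^{n_n}\phi(s,\underline\e)\big\vert_{\underline\e=\underline0}=0 .
\end{equation*}
Hence the mixed partial vanishes whenever $\sum_{i\in O}n_i$ is odd. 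That covers the first of the two stated hypotheses. For the second hypothesis, when $\sum_{i=1}^n n_i$ is odd, we instead invoke Remark~\ref{odd-implies-zero}: the one-cusp scattering matrix is even in the character, so $\phi(s,\underline\e)=\phi(s,-\underline\e)$ (flipping \emph{all} variables), and the same differentiation argument with the factor $(-1)^{\sum_i n_i}$ forces the mixed partial to vanish when the \emph{total} order $\sum_i n_i$ is odd. Either hypothesis thus produces a sign-change symmetry under which the relevant monomial is odd, so its coefficient is zero.

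There is essentially no analytic obstacle here; the only thing to be careful about is bookkeeping of which lemma supplies which sign flip and confirming that each is legitimately applicable in the one-cusp setting (Lemma~\ref{parity-phi} for the $O$-block, Remark~\ref{odd-implies-zero} for the full flip), together with the trivial fact that an analytic function of several variables invariant under flipping a subset of them has all mixed Taylor coefficients of odd total degree \emph{in that subset} equal to zero. The mild ``obstacle'', if any, is purely notational: the corollary statement writes the operator as $\partial_{\e_1}^{n_1}\cdots\partial_{\e_{2g}}^{n_n}$, so one should read $n$ as the number of forms under consideration (at most $2g$) and understand $n_i=0$ for the unused indices, which makes the two displayed sums well-defined.
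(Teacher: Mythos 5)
Your argument is correct and is essentially identical to the paper's own proof: the authors likewise differentiate the identity of Lemma~\ref{parity-phi} and evaluate at $\underline\e=\underline 0$ to handle the case where $\sum_{i\in O}n_i$ is odd, and invoke Remark~\ref{odd-implies-zero} (evenness of the one-cusp scattering matrix in the character, i.e.\ $\phi(s,\underline\e)=\phi(s,-\underline\e)$) for the case where the total order $\sum_i n_i$ is odd. Your explicit sign-factor computation just spells out what the paper leaves implicit.
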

\begin{proof} If $\displaystyle \sum_{\substack{i=1\\\alpha_i
      \rm{odd}}}^{n}n_i$ is odd differentiate the equality in Lemma \ref{parity-phi} and
  plug $\underline \e=\underline 0$. If $\displaystyle
  \sum_{i=1}^{n}n_i$ is odd, use instead Remark \ref{odd-implies-zero}.
\end{proof}
We choose $\alpha_i$ as follows: Let  $f_1,\ldots f_g$ be
a basis for $S_2(\Gamma)$, where $f_j$ has real Fourier
coefficients. (If $\Gamma$ is congruence there is even a basis with
integer coefficients).
%
%
 Then  we let $$\alpha_j=\Re(f_j(z)dz)\textrm{ and
} \alpha_{j+g}=\Re(if_j(z)dz),\quad  j=1,\ldots, g,$$ so  that $\alpha_j$
is odd when $j=1,\ldots,g$, and even when $j=g+1,\ldots,2g$.
With these choices the Hessian has the following form 
for $u_j$ even (an analogous result holds for odd ones):
\begin{proposition}\label{lots-of-zeroes} Assume that  $s_j=s_j(0)$ correspond to a simple even
  embedded eigenvalue. Then outside the lower-right $g\times g$ corner of the Hessian of $s_j(\underline{\e})$ at
  $\underline{\e}=0$  all entries are zero. 
\end{proposition}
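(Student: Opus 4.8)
The plan is to reduce the statement to the vanishing of the residues $\res_{s=s_j}D(z,s,\alpha_k)$ in the \emph{odd} directions and then invoke parity. Since $s_j$ is a simple eigenvalue we have $m=1$, and Proposition~\ref{general-hessian} expresses the Hessian as $h_{kl}=-\Re\inprod{\res_{s=s_j}D(z,s,\alpha_k)}{\res_{s=s_j}D(z,s,\alpha_l)}$. Hence every entry $h_{kl}$ with $k\le g$ or $l\le g$ will vanish as soon as one shows that $\res_{s=s_j}D(z,s,\alpha_k)=0$ for $k=1,\ldots,g$, i.e. for the odd forms $\alpha_k=\Re(f_k(z)\,dz)$.

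To establish this vanishing I would argue by parity, in either of two equivalent ways. First, by (\ref{Dn-residue}) with $n=1$ and $m=1$, the residue $\res_{s=s_j}D(z,s,\alpha_k)$ is a scalar multiple of the (even) cusp form $u_j$. On the other hand $D(z,s,\alpha_k)=D^1(z,s,\alpha_k)$ is, by Lemma~\ref{parity-Ds}, a function of parity $\eta_k=-1$ for $k\le g$, and this parity is inherited by its residue at $s_j$ (the parity in $z$ survives the limiting procedure defining the residue). An odd function which is simultaneously a multiple of the even function $u_j$ must be identically zero. Alternatively, one can argue directly: since $\alpha_k$ is odd the operator $L^{(1)}$ changes parity, so $L^{(1)}E(z,s_j)$ is odd (the Eisenstein series being even for all $s$ under the assumption $\Gamma=T\Gamma T$), whence the Phillips--Sarnak integral $\inprod{L^{(1)}E(z,s_j)}{u_j}$ vanishes because it pairs an odd function with an even one over a fundamental domain symmetric about the imaginary axis; this is precisely the scalar multiplying $u_j$ in the residue formula.

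Putting the two ingredients together, $\res_{s=s_j}D(z,s,\alpha_k)=0$ for $k=1,\ldots,g$, so $h_{kl}=0$ whenever $\min(k,l)\le g$, and the only entries of the Hessian not forced to vanish are those with $g+1\le k,l\le 2g$, i.e. the lower-right $g\times g$ corner. The analogous statement for $u_j$ odd follows by the same argument with the roles of the even and odd $\alpha_i$ interchanged.

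I do not expect a genuine obstacle here: the proposition is essentially a parity computation grafted onto Proposition~\ref{general-hessian}. The one point needing a little care is to justify that $\res_{s=s_j}D(z,s,\alpha_k)$ both lies in the one-dimensional eigenspace $\C u_j$ and retains the $T$-parity of $D^1(z,s,\alpha_k)$; once these two facts are recorded, the vanishing in the odd directions — and hence the block structure of the Hessian — is immediate.
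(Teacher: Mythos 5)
Your proposal is correct and follows essentially the same route as the paper: reduce via Proposition~\ref{general-hessian} to the vanishing of $\res_{s=s_j}D(z,s,\alpha_k)$ for the odd directions $k\le g$, then note that by Lemma~\ref{parity-Ds} this residue is odd while by (\ref{Dn-residue}) it is a multiple of the even eigenfunction $u_j$, hence zero. The alternative argument via the vanishing of the Phillips--Sarnak pairing of an odd with an even function is a harmless restatement of the same parity observation.
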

\begin{proof}
Outside the lower-right $g\times g$ corner of the Hessian
  we see from Proposition \ref{general-hessian} that
  all entries involve an inner product with one entry being $\res_{s=s_j}D(z,s,\a_l)$
  where $\alpha_l$ is odd. It follows from  Lemma \ref{parity-Ds} that
   $D(z,s,\a_l)$ is odd for such an $\alpha_l$. Its residue at $s_j$ is, therefore, also odd, while it is a multiple of $u_j$, which is even. As a result $D(z,s,\a_l)$ must be regular at $s_j$.  We conclude that
  $\res_{s=s_j}D(z,s,\a_l)=0$ and that the relevant inner product is
  zero.


\end{proof}

%




\begin{remark}\label{terms-zero}Note that many derivatives - e.g. all odd ones - of  $\Re(s_j)(\underline\e)$
  are zero when evaluated at $\underline \e=\underline 0$. This
  follows from combining (\ref{williteverend}) and Corollary
  \ref{lots-zero}. Hence in the  Taylor expansion of
  $\Re(s_j)(\underline\e)$ there are many terms which are automatically zero.
\end{remark}

We now restrict ourselves for simplicity to the case genus $g=1$. Choosing $f(z)$ to
be the unique cusp form with real coefficients and first non-zero
Fourier coefficient equal to 1 we
have that $\alpha_1=\Re(f(z)dz)$  is odd and
$\alpha_2=\Im(f(z)dz)=\Re(if(z)dz)$ is even.

Subgroups of $SL_2(\Z)$ with 1 cusp at
infinity are called cycloidal groups. Examples of cycloidal groups with genus 1 can be found in \cite{stromberg-thesis}. 
We explain another classical construction of a group with the desired properties:

\begin{example} Let $p$ be a prime. Consider $\Gamma_0(p)$
which has 2 cusps. Let $$W_p:=\left(\begin{array}{cc}0&-1/\sqrt{p}\\
    \sqrt{p}&0\end{array}\right)$$ be the Fricke involution. The
corresponding Fricke group $\Gamma_*(p):=\Gamma_0(p)\cup
\Gamma_0(p)W_p$ is a Fuchsian group with finite covolume. It has only 1 cusp as it identifies the
two cusps of $\Gamma_0(p)$. It is clear that an $f\in
S_2(\Gamma_0(p))$ lifts to $S_2(\Gamma_*(p))$ if and only if
$f\vert_2W_p(z)=f(z)$, i.e. if $f$ is an eigenfunction of the Fricke
involution with eigenvalue $+1$. The smallest prime for which
$S_2(\Gamma_*(p))\neq \emptyset$ is $p=37$, in which case it is
one-dimensional, i.e. the genus  is 1. A non-zero element of  $S_2(\Gamma_*(37))$ can be
constructed as follows, see \cite[\textsection 5]{mazur-swinnerton-dyer},
\cite[p. 57 ff.]{elkies} for additional details:
Let $\theta_B$ and $\theta_C$ be the theta functions associated to the
positive definite quadratic forms with matrices
$$B=\left(\begin{array}{cccc}2&1&0&1\\1&8&1&-3\\0&1&10&2\\1&-3&2&12\end{array}\right),\quad C=\left(\begin{array}{cccc}4&1&2&1\\1&4&1&0\\2&1&6&-2\\1&0&-2&20\end{array}\right). $$
Then $\phi=\frac{1}{2}(\theta_B-\theta_C)$ is a newform of $S_2(\G_0(37))$
with $\phi\vert_2W_{37}(z)=\phi(z)$, i.e. $\phi\in
S_2(\Gamma_*(37))$. It has Fourier expansion at infinity
$$\phi(z)=q-2q^2-3q^3+2q^4-2q^5+6q^6-q^7+6q^9+4q^{10}+\cdots,\quad q=e^{2\pi i z}.$$
The $L$-function of $\phi$ equals the zeta-function of the elliptic
curve $y^2+y=x^3-x$, which is 37-A1 in Cremona's table.
\end{example}
Consider an embedded even eigenvalue $s_j$ for the Laplacian for
$\Gamma_*(37)$. We assume for simplicity that the dimension of the
eigenspace is 1. In this case Proposition \ref{lots-of-zeroes} says
that the Hessian is given by 
$$\left(
\begin{array}{cc}
0&0\\
0&-\norm{\res_{s=s_j} D(z,s,\alpha_2)}^2
\end{array}
\right)$$
where $\alpha_2=\Im(\phi(z)dz)$. 

By parity considerations
$\res_{s=s_j} D(z,s,\alpha_1)=0$, where $\alpha_1=\Re(\phi(z)dz)$. It
follows that $\norm{\res_{s=s_j} D(z,s,\alpha_2)}\neq 0$ if and only
if $\res_{s=s_j} D(z,s,\phi(z)dz)\neq 0$. But this happens precisely if
the Rankin-Selberg $L$-function between the corresponding eigenfunction
$u_j$ and the weight 2 cusp form $\phi$ does not vanish at the central point $s_j+1/2$,
i.e. if $$L(u\otimes\phi,s_j+1/2)\neq 0.$$ Hence the lower right
corner of the Hessian -- and therefore  $\frac{d^2}{d\e_2^2}\Re(s_j(\e_1,\e_2))\vert_{\e=0}$--  is non-zero if and only if  $L(u\otimes\phi,s_j+1/2)\neq 0$.
 
\begin{remark} \label{luo}
The value   $L(u\otimes\phi,s_j+1/2)$ is expected to be non-zero for many even cusp forms $u_j$. In
fact Luo proved \cite{luo} that the central
value of the Rankin-Selberg $L$-function of a weight 4 cusp form with a
Maa\ss{} eigenfunction is non-zero for a positive proportion of the eigenfunctions.
\end{remark}

In this setup  we search for  conditions that ensure that the cuspidal
eigenvalue dissolves in a (punctured) neighborhood of the
deformation space  $\hbox{Harm}_\R^1(M)$.

\begin{theorem}\label{destruction-in-a-neighborhood} Assume that
  $\frac{d^2}{d\e_2^2}\Re(s_j(\e_1,\e_2))\vert_{\underline{\e}=\underline 0}\neq 0$ and that
  for some $\m$
$\frac{d^\m}{d\e_1^\m}\Re(s_j(\e_1,\e_2))\vert_{\underline{\e}=\underline 0}\neq 0$. Then 
$$\Re(s_j(\e_1,\e_2))<1/2$$ in a punctured neighborhood of $(0,0)$,
i.e. the cuspidal eigenvalue becomes a resonance in this punctured
neighborhood.
\end{theorem}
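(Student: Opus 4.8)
\textit{Plan.} The idea is to use two soft ingredients already in hand: the exact parity symmetries of the scattering function, which will force $\Re s_j(\e_1,\e_2)$ to be even in each of $\e_1$ and $\e_2$ \emph{separately}, together with the fact recalled at the start of this section that $\underline\e=\underline 0$ is a global maximum of $\Re s_j(\underline\e)$ with value $1/2$. Set $g(\e_1,\e_2):=\Re s_j(\e_1,\e_2)-1/2$; since the eigenvalue is simple we have $g=\Re\hat s_j-1/2$, and $g$ is smooth with $g\le 0$ and $g(\underline 0)=0$. The goal is to show $g<0$ on a punctured neighbourhood of $\underline 0$.

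First I would record the parity. Here $\alpha_1=\Re(\phi(z)\,dz)$ is odd and $\alpha_2=\Im(\phi(z)\,dz)$ is even, so Lemma~\ref{parity-phi} gives $\phi(s,-\e_1,\e_2)=\phi(s,\e_1,\e_2)$, while Remark~\ref{odd-implies-zero} (the one-cusp scattering matrix is even in the character) gives $\phi(s,-\e_1,-\e_2)=\phi(s,\e_1,\e_2)$; combining these two also yields $\phi(s,\e_1,-\e_2)=\phi(s,\e_1,\e_2)$. Hence $\phi'(s,\underline\e)/\phi(s,\underline\e)$ is invariant under $\e_1\mapsto-\e_1$ and under $\e_2\mapsto-\e_2$, and inserting this into the contour-integral formula~(\ref{williteverend}) for $2m\,\Re(\hat s_j(\underline\e)-s_j)$ shows $g$ is even in $\e_1$ and even in $\e_2$. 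In particular the odd-order pure derivatives $\partial_{\e_1}^j g(\underline 0)$ and $\partial_{\e_2}^j g(\underline 0)$ all vanish, and $\partial_{\e_2}g(\e_1,0)=0$ for every $\e_1$.

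Next I would analyse the two coordinate restrictions and read off sign information from the maximum. Along the $\e_2$-axis the hypothesis $\tfrac{d^2}{d\e_2^2}\Re s_j|_{\underline 0}\ne0$ combined with negative semidefiniteness of the Hessian at the maximum (equivalently, Proposition~\ref{general-hessian}, which identifies this entry as $-\norm{\res_{s=s_j}D(z,s,\alpha_2)}^2$) forces $\partial_{\e_2}^2 g(\underline 0)<0$, hence by continuity $\partial_{\e_2}^2 g<0$ on a neighbourhood $[-\delta,\delta]^2$. Along the $\e_1$-axis, by hypothesis some $\partial_{\e_1}^\ell g(\underline 0)\ne0$; by evenness the least such $\ell=\ell_0$ is even, so $g(\e_1,0)=\tfrac{1}{\ell_0!}\partial_{\e_1}^{\ell_0}g(\underline 0)\,\e_1^{\ell_0}+o(\e_1^{\ell_0})$, and since $g(\e_1,0)\le 0$ while $\e_1^{\ell_0}\ge 0$ the leading coefficient is strictly negative; thus $g(\e_1,0)<0$ for $0<\abs{\e_1}$ small.

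Finally I would assemble these. For each fixed small $\e_1$, the map $t\mapsto g(\e_1,t)$ is strictly concave on $[-\delta,\delta]$ with vanishing derivative at $t=0$, hence $g(\e_1,\e_2)<g(\e_1,0)$ whenever $0<\abs{\e_2}\le\delta$; together with $g(\e_1,0)\le 0$ this gives $g(\e_1,\e_2)<0$ for all sufficiently small $(\e_1,\e_2)$ with $\e_2\ne0$, while for $\e_2=0,\ \e_1\ne0$ the previous step gives $g(\e_1,0)<0$. Therefore $g<0$ on a punctured neighbourhood of $\underline 0$, i.e.\ $\Re s_j(\e_1,\e_2)<1/2$ there, which places the singular point in the open left half-plane and hence makes it a resonance rather than an embedded eigenvalue. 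No hard estimate is involved; the only points requiring care are keeping the two independent parity statements straight — one evenness is immediate from Lemma~\ref{parity-phi}, the other only after combining it with Remark~\ref{odd-implies-zero} — and extracting the correct (strictly negative) sign of the two leading Taylor coefficients from the global maximum.
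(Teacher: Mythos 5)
Your argument is correct, and it takes a genuinely different and more economical route than the paper's. Both proofs start from the same inputs: the parity identities (Lemma~\ref{parity-phi} together with Remark~\ref{odd-implies-zero} give that $\phi(s,\underline\e)$ is even in $\e_1$ and in $\e_2$ separately, hence so is $g=\Re s_j(\underline\e)-1/2$ by (\ref{williteverend})), and the strict negativity of the two leading coefficients forced by the hypotheses plus the global maximum $g\le 0$. The paper then argues purely from the Taylor expansion at $\underline 0$, splitting a neighbourhood into three regions (the axes; the set $b_1\abs{\e_1}^{\m_0}<\abs{\e_2}<b_2$, where $c_{0,2}\e_2^2$ dominates; and its complement, where one substitutes $\e_2=u\e_1^{\m_0}$ and extracts the leading term $(c_{0,2}u^2+c_{2\m_0,0})\e_1^{2\m_0}$), estimating the remainder in each region after a suitable choice of $b_1,b_2$. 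You instead use the evenness of $g$ in $\e_2$ as a \emph{functional identity} valid for all small $\underline\e$ — not merely as the vanishing of Taylor coefficients at the origin — to get $\partial_{\e_2}g(\e_1,0)=0$ for every small $\e_1$; combined with $\partial_{\e_2}^2 g<0$ on a full neighbourhood (continuity from the Fermi hypothesis), this makes $\e_2\mapsto g(\e_1,\e_2)$ strictly concave with critical point at $\e_2=0$, whence $g(\e_1,\e_2)<g(\e_1,0)\le 0$ off the $\e_1$-axis, and the axis itself is the same one-variable argument as the paper's region $A_1$. This buys you the elimination of the region decomposition and of the constants $b_1,b_2$ entirely; the price is only that one must justify the evenness as an identity (which you do, via the symmetries of $\phi(s,\underline\e)$ inserted into the contour formula) rather than coefficient-by-coefficient.
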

\begin{remark} While the assumption
  $\frac{d^2}{d\e_2^2}\Re(s_j(\e_1,\e_2))\vert_{\underline{\e}=\underline 0}\neq 0$ is
  the (standard) Fermi's Golden rule and is related to Rankin--Selberg convolutions, see Remark \ref{luo}, the conditions $\frac{d^\m}{d\e_1^\m}\Re(s_j(\e_1,\e_2))\vert_{\underline{\e}=\underline 0}$ are 
given exactly by the higher order Fermi's golden rules, see Theorem \ref{higher-fermi-intro}. \end{remark}

\begin{proof} This proof is motivated  by
  \cite[p. 366--367]{avelin}. Choose $\m$ minimal such that
  $\frac{d^\m}{d\e_1^\m}\Re(s_j(\e_1,\e_2))\vert_{\underline{\e}=0}\neq 0$. Since
  $\Re(s_j(\e_1,\e_2))\leq 1/2$ we have that $\m$ must be even, say
  equal to $2\m_0$. 

Consider the Taylor expansion 
\begin{equation}\label{taylor-expansion}\Re(s_j(\e_1,\e_2)-1/2)=\sum_{k=2}^\infty\sum_{\substack{n_1+n_2=k\\n_i\geq
0}}c_{n_1,n_2}\e_1^{n_1}\e_2^{n_2}.\end{equation} By
arguing as in Remark \ref{terms-zero} we see that $c_{n_1,n_2}$ is
zero if $n_1+n_2$ is odd or if $n_1$ is odd. Hence it can only be
non-zero if both $n_1$ and $n_2$ are even.

We split $\R^2$ in 3 disjoint sets in the following way. Let
\begin{eqnarray*}
  A_1&=&\{(\e_1,\e_2)\vert \e_1=0\textrm{ or }\e_2= 0\} \\
  A_2&=& \{(\e_1,\e_2)\vert b_1\abs{\e_1}^{\m_0}<\abs{\e_2}<b_2\}\backslash
  A_1\\
  A_3&=& \R^2\backslash (A_1\cup A_2)
\end{eqnarray*}
Here $b_1$ is some appropriate large  positive number and $b_2$ is some
appropriate small positive number to be determined below. We need to  show that for
$0\neq (\e_1,\e_1)\in A_i$ with sufficiently small norm
$\Re(s_j(\e_1,\e_2)-1/2)<0$.

Assume first $0\neq (\e_1,\e_2)\in A_1$. Assume without loss of
generality $\e_2=0$. Then by considering the Taylor expansion of
$\Re(s_j(\e_1,0)-1/2)$ we see using
$\frac{d^\m}{d\e_1^\m}\Re(s_j(\e_1,\e_2))\vert_{\underline \e=\underline 0}< 0$ that
$\Re(s_j(\e_1,\e_2)-1/2)<0$ for $\e_1$ sufficiently small.

Assume next that $0\neq (\e_1,\e_2)\in A_2$. Then $\abs{\e_2}\leq b_2$
and $\abs{\e_1}\leq (b_2/b_1)^{1/\m_0}$. Consider a term with
$n_1+n_2\leq \m$ in the expansion (\ref{taylor-expansion}). If
$(n_1,n_2)\neq (0,2)$ we show that for appropriate choices of $b_1,b_2$
we have 
$$\abs{c_{n_1,n_2}\e_1^{n_1}\e_2^{n_2}}< \frac{1}{2\m^2}\abs{c_{0,2}}\e_2^2$$
If $n_2\geq 2$ this is clear. If $n_2=1$, $c_{n_1,n_2}=0$. If $n_2=0$ the only non-zero term is $c_{\m,0}\e_1^{\m}$ which
can be bounded as
$$\abs{c_{\m,0}\e_1^{\m}}\leq \abs{c_{\m,0}}\abs{\e_2}^2/b_1^2,$$
which makes the claim clear also in this case.
From  
$$\Re(s_j(\e_1,\e_2)-1/2)=\sum_{k=2}^{\m}\sum_{\substack{n_1+n_2=k\\n_i\geq
0}}c_{n_1,n_2}\e_1^{n_1}\e_2^{n_2}+O(\norm{(\e_1,\e_2)}^{\m+1})$$
it follows that for sufficiently small $(\e_1,\e_2)\in A_2$ we have,
using $c_{0,2}<0$, that
$$\Re(s_j(\e_1,\e_2)-1/2)<
\frac{c_{0,2}}{2}\e_2^2+C\abs{\e_2}^{\m+1}< 0,$$
which proves the claim when $(\e_1,\e_2)\in A_2$.

 Lastly we consider $(\e_1,\e_2)\in A_3$ and assume
 $\norm{(\e_1,\e_2)}<b_2$. 
 We now set
$\e_2=u\e_1^{\m_0}$, and we have $\abs{u}\leq b_1$. Then 
$$\Re(s_j(\e_1,u\e_1^{l_0})-1/2)=\sum_{k=2}^\infty\sum_{\substack{n_1+n_2=k\\n_i\geq
0}}c_{n_1,n_2}u^{n_2}\e_1^{n_1+\m_0n_2}.$$
Consider now the terms in this expansion with $n_1+\m_0n_2\leq
2\m_0$. We have $n_2=0,1,2$. If $n_2=0$ then $n_1\leq 2\m_0$. But by
choice of $\m$ among these $c_{n_1,0}$ is only non-zero when
$n_1=2\m_0$. If $n_2=1$ we have $c_{n_1,n_2}=0$.
If $n_2=2$ then we must have $n_1=0$. It follows that 
$$\Re(s_j(\e_1,u\e_1^{l_0})-1/2)=(c_{0,2}u^2+c_{2\m_0,0}))\e_1^{2\m_0}+O(\abs{\e_1}^{2\m_0+1}).$$ 
We note that by our assumptions $(c_{0,2}u^2+c_{2\m_0,0})$ is negative
and bounded away from zero.
Hence  for $\e_1$ small enough 
$$\Re(s_j(\e_1,u\e_1^{l_0})-1/2)<(c_{0,2}u^2+c_{2\m_0,0}))\e_1^{2\m_0}+C(\abs{\e_1}^{2\m_0+1})<0,$$
which finishes the proof. Here $C$ is an absolute constant, since $u$
is bounded.
\end{proof}

\begin{remark}If a cusp form remains on a real analytic subvariety of the
deformation space, as suggested in the
Teichm\"uller case by Farmer and Lemurell \cite{farmerlemurel}, then all the conditions
$\frac{d^\m}{d\e_1^\m}\Re(s_j(\e_1,\e_2))\vert_{\e=0}=0$. But in this case we automatically see that the line
spanned by $\alpha_1$ is contained in such a subvariety. This determines the subvariety, because $\Re (s_j(\e))$ is real analytic. \end{remark}

\begin{remark}In the light of Theorem \ref{destruction-in-a-neighborhood}  it would be very
interesting to investigate numerically $\frac{d^4}{d\e_1^4}\Re(s_j(\e_1,\e_2))\vert_{\underline \e=\underline 0}$, or
equivalently $\res_{s=s_j} D^2(z,s,\alpha_2)$, as the non-vanishing of these would imply
that the cuspidal eigenvalue becomes a resonance in a punctured
neighborhood. So the cuspidal eigenvalue would be isolated in the
deformation space.

\end{remark}

\end{document}